\newlength\figureheight
\newlength\figurewidth
\newsavebox{\activebox}
 \numberwithin{equation}{section}
\newtheorem{theorem}{Theorem}[section]
\newtheorem{lemma}[theorem]{Lemma}
\newtheorem{proposition}[theorem]{Proposition}
\newtheorem{remark}[theorem]{Remark}
\newtheorem{example}[theorem]{Example}
\newtheorem{corollary}[theorem]{Corollary}
\def\th{\text{TH}}
\def\bh{\text{BH}}
\def\Rr{{\mathbb R}}
\def\k{{\kappa}}
\def\H1{{\mathbb H}^1}
\def\R2{{\mathbb R}^2}
\def\R3d{{\mathbb R}^{3d}}
\def\RR{\mathbb R}
\def\Rmd{{\mathbb R}^{md}}
\def\Rd{{\mathbb R}^d}
\def\S{{\mathcal S}}
\def\F{{\mathcal F}}
\def\TF{{\mathcal F}}
 \def\LW{{L_w^{(p_0,{\bf p}),\kappa;({\bf q},q_{m+1}),\rho}}}
 \def\MW{{\mathcal M_w^{(p_0,{\bf p}),\kappa;({\bf q},q_{m+1}),\rho}}}
 \def\MV{{\mathcal M_v^{(r_0,{\bf r}),\kappa;({\bf s},s_{m+1}),\rho}}}
\newcommand{\ip}[2]{\langle#1,#2\rangle}
\begin{document}

\title{Boundedness of multilinear pseudo-differential operators on modulation spaces}

\author{Shahla Molahajloo, Kasso A.~Okoudjou, and G\"otz E. Pfander}

\address{Shahla Molahajloo\\
Department of Mathematics\\  Institute for Advanced Studies in Basic Sciences (IASBS)\\
P. O. Box 45195-1159\\ Gava Zang, Zanjan 45137-66731 Iran}
 \email{Molahajloo@iasbs.ac.ir}

\address{Kasso A.~Okoudjou\\
Department of Mathematics\\
 University of Maryland\\
 College Park, MD, 20742 USA}
\email{kasso@math.umd.edu}

\address{G\"otz E. Pfander\\
School of Science and Engineering\\
 Jacobs  University\\ 28759 Bremen, Germany}
\email{g.pfander@jacobs-university.de}

\subjclass[2000]{Primary 47G30; Secondary 35S99, 42A45, 42B15, 42B35}

\date{\today}

\maketitle \pagestyle{myheadings} \thispagestyle{plain}
\markboth{S. MOLAHAJLOO, K. A. OKOUDJOU, G. E. PFANDER}{SYMBOL CLASSES FOR MULTILINEAR PSEUDO-DIFFERENTIAL OPERTORS}
% \tableofcontents

%\thispagestyle{empty}
\noindent
\begin{abstract}
Boundedness results for multilinear pseudodifferential operators on products of modulation spaces  are derived based on ordered integrability conditions on the short-time Fourier transform of the operators' symbols. The flexibility and strength of the introduced methods is demonstrated by their application to  the bilinear and trilinear Hilbert transform.  
\end{abstract}

\section{Introduction and motivation}\label{sec1}

Pseudodifferential operators  have long been studied in the context of partial differential equations \cite{Hor2, Hor1,Kumano, stein93, taylor,WongPseudo, WongWeyl}. Among the most investigated topics on such  operators are  minimal smoothness and decay conditions  on their symbols   that guarantee their boundedness on function spaces of interest.  In recent years, results from time-frequency analysis have been exploited to obtain boundedness results on so-called modulation spaces, which in turn yield boundedness on Bessel potential spaces, Sobolev spaces, and Lebesgue spaces via well established embedding results. 
In this paper, we develop  time-frequency analysis based methods in order to establish boundedness of classes \emph{multilinear} pseudodifferential operators on products of modulation spaces.
%
%Boundedness on products of modulation spaces was first investigated in \cite{BeOk04, BGHO}; results obtained in this setting have been used to establish  well posedness  for a number of non-linear PDEs in these spaces \cite{abekok07, bgor07}.

\subsection{Pseudodifferential operators}\label{sub1.1}
A pseudodiffrential operator is  an operator $T_\sigma$ formally defined through its symbol $\sigma$ by $$T_\sigma f(x)=\int_{\RR^d}\sigma(x, \xi) \hat{f}(\xi) \,e^{2\pi i x\cdot \xi} d\xi,$$ where the Fourier transformation is formally given by
$\left(\F f \right)(\xi)=\widehat f(\xi)=\int_{\Rd}e^{-2\pi ix\cdot\xi}f(x)\,dx.$
H\"ormander symbol classes are arguably the most used in investigating pseudodifferential operators.
  In particular, the class of smooth symbols with bounded derivatives was shown to yield bounded operator on $L^2$ in the celebrated work of Calder\'on and Vaillancourt \cite{cava}. More specifically, if $\sigma \in S_{0,0}^0$, that is, for all non-negative integers $\alpha,\beta$ there exists $C_{\alpha,\beta}$ with
\begin{equation}\label{linearcz-condition}|\partial_{x}^\alpha \partial_\xi^\beta\sigma(x, \xi)|\leq C_{\alpha, \beta},\end{equation}  then $T_\sigma$ maps $L^2$ into itself. 
%In fact, one only consider non-negative $\alpha,\beta$ with $|\alpha|+|\beta|\leq 2d+1$ to establish this boundedness result. 

\subsection{Time-frequency analysis of pseudodifferential operators}\label{sub1.2} In \cite{Sjostrand1}, J.~Sj\"ostrand   defined a class of bounded operators on $L^2$ whose symbols do not have to satisfy a  differentiability assumption and which contains those operators with symbol in $S_{0,0}^0$. He  proved that this class of symbols forms an algebra under the so-called twisted convolution \cite{fol89, Groch2, Sjostrand1, Sjostrand2}.  Incidentally, symbols of Sj\"ostrand's  class operators are characterized by their membership in the \emph{modulation space} $M^{\infty, 1}$, a space of tempered distributions introduced  by Feitchinger via integrability and decay conditions on the distributions' \emph{short-time Fourier transform} \cite{fei83}.  
% In  broad terms,   modulation spaces offer an intrinsic measure of the local time-frequency content of distributions. 
 Gr\"ochenig and Heil  then significantly extended  Sj\"ostrands results  by establishing the boundedness of his pseudodifferential operators on all modulation spaces \cite{GrochHeil1}. 
 
These and similar results on pseudodifferential operators were recently extended by Molahajloo and Pfander through the introduction of ordered integrability conditions on the short-time Fourier transform of the operators' symbols \cite{mopf}.  Similar approaches have  been used to derive other boundedness results of pseudodifferential operators on modulation space like spaces \cite{Shannon}. The approach of varying integration orders of short-time Fourier transforms of, here, symbols of multilinear operators lies  at the center of this paper. 
 
Today, the  functional analytical tools developed to analyze pseudodifferential operators on modulation spaces form an integral part of time-frequency analysis.   They are used, for example, to model time-varying filters prevalent in signal processing. By now, a robust body of work stemming from this point of view has been developed \cite{Czaja, GrochHeil1, GrochHeil2, hrt1, rochtach, Toft1, Toft2, Toft3, tach}, and has lead to a number of applications to areas such as seismic imaging, and communication theory \cite{magigro, Stro}.

\subsection{Multilinear pseudodifferential operators}\label{sub1.3}  A multilinear pseudo-differential operator $T_\sigma$ with distributional symbol $\sigma$  on $\RR^{(m+1)d}$, is formally given by  
\begin{equation}\label{mpsido}
 \left(T_{\sigma} {\bm f}\right)(x)=\int_{\RR^{md}}e^{2\pi i x\cdot (\sum_{i=1}^d \xi_i)}\sigma(x,\bm{\xi})\widehat{ {f_1}}(\xi_1)  \widehat{ {f_2}}(\xi_2)  \ldots   \widehat{ {f_m}}(\xi_m)\,d\bm{\xi}.
\end{equation}
Here and in the following we use boldface characters as $\bm \xi=(\xi_1,\ldots,\xi_m)$ to denote products of $m$ vectors  $\xi_i\in \RR^d$, and it will not cause confusion to use the symbol $\bm f$ for both,  a vector of $m$ functions or distributions $\bm f = (f_1,\ldots,f_m)$, that is, a vector valued function or distribution on $\RR^d$, and the rank one tensor  $\bm f = f_1\otimes \ldots \otimes f_m$, a function or distribution on $\RR^{md}$. For example, we write $\widehat{\bm f}(\bm \xi)=\widehat{ f_1}( \xi_1)\cdot \ldots \cdot \widehat{ f_m}( \xi_m)$, while $\widehat{\bm f}( \xi)=(\widehat{ f_1}( \xi), \ldots , \widehat{ f_m}( \xi))$.

A trivial example of a multilinear operator is given by the constant symbol $\sigma \equiv 1$. Clearly, $T_\sigma(\bm f)$ is simply the product  $f_1(x)f_2(x)\ldots f_m(x)$. Thus, H\"older's inequality determines boundedness on products of Lebesgue spaces. On the other hand, when the symbol is independent of the space variable $x$, that is, when $\sigma(x, \bm{\xi})\equiv \tau(\bm{\xi})$, the $T_\sigma=T_\tau$ is a multilinear Fourier multipliers.  We refer to \cite{beto, betzi, come, gratore, come2, mtt} and the references therein for a small sample of the  vast literature on multilinear pseudodiffrential operators.

One of the questions that has been repeatedly investigated relates to  (minimal) conditions on the symbols $\sigma$ that would guarantee the boundedness of ~\eqref{mpsido} on  products of certain function spaces, see \cite[Theorem 34]{come}. For example, one can ask if a multilinear version of~\eqref{linearcz-condition} exist. B\'enyi  and Torres  (\cite{beto}) proved that  unless additional conditions are added, there exist symbols which satisfy such multilinear estimates but for which the corresponding multilinear pseudodifferential  operators are unbounded on products of certain Lebesgue spaces. Indeed, in the bilinear case, that is, when $m=2$,    the  class of  operators  whose 
symbols satisfy for all non-negative integers $\alpha, \beta, \gamma$, 
\begin{equation} \label{bicv}
|\partial_x^\alpha \partial_\xi^\beta \partial_\eta^\gamma \sigma(x,\xi,\eta)|
\leq C_{\alpha,\beta,\gamma}
\end{equation} 
contains  operators that do not map   $L^2 {\times} L^2$
into $L^1$. 

Multilinear pseudodifferential operators in the context of their boundedness on modulation spaces, were first investigated in \cite{BeOk04, BGHO}. Results obtained in this setting have been used to establish  well posedness  for a number of non-linear PDEs in these spaces \cite{bgor07, abekok07}. For example, and as opposed to the classical analysis of multilinear pseudodifferential operators, it was proved in~\cite{BGHO} that symbols satisfying~\eqref{bicv} yield boundedness from $L^2\times L^2$ into the modulation space $M^{1, \infty}$, a space  that contains $L^1$. 
The  current paper offers some new insights and results in this line of investigation.

\subsection{Our contributions}\label{sub1.4} Modulation spaces are defined by imposing integrability conditions on the short-time Fourier transform of the distribution at hand. 
%Indeed, denoting the short-time Fourier transform of  a tempered distribution $f$ on $\RR^d$ with respect to a Schwartz class function $\phi$ by $V_\phi F (x, \xi)$,  where the variable $x\in \RR^d$ is considered the time/space variable, and $\xi\in \RR^d$ denotes frequency. 
Following ideas from Molahajloo and Pfander \cite{mopf}, we  impose various ordered integrability conditions on the short-time Fourier transform of  a tempered distribution $\sigma$ on $\RR^{(m+1)d}$ which is a symbol of a multilinear pseudodifferential operator.  By using this new setting, we establish  new boundedness results for multilinear pseudodifferential operators on products of modulations spaces. For example, the following result follows from  our main result, Theorem~\ref{thm:bilinear}.

\begin{theorem}\label{model-result} If $1\leq p_0, p_1, p_2, q_1, q_2, q_3 \leq \infty$ satisfy $$ \tfrac{1}{p_0}\leq \tfrac{1}{p_1}+ \tfrac{1}{p_2}\quad {\textrm and}\quad 1+ \tfrac{1}{q_3}\leq \tfrac{1}{q_1}+ \tfrac{1}{q_2},$$ and if  for some Schwartz class function $\varphi$, the symbol short-time Fourier transform 
$$\mathcal V_{\bf \varphi} \sigma(x,t_1,t_2,\xi_1,\xi_2,\nu)= \int \!\!\! \int \!\! \! \int \sigma(\widetilde x,\widetilde{\xi_1},\widetilde{\xi_2}) \varphi(x-\widetilde x)\varphi(\xi_1 -\widetilde {\xi_1})\varphi(\xi_2 -\widetilde {\xi_2})) \,e^{-2\pi i (x\nu-t_1\xi_1-t_2\xi_2)}d\widetilde x \,d\widetilde {\xi_1} \,d\widetilde {\xi_2}$$
satisfies
\begin{align}\label{en:defanmod}
  \|\sigma\|_{\mathcal{M}^{(\infty, 1,1); (\infty, \infty, 1)}}=\int \sup_{\xi_1,\xi_2}\iint \sup_{x} |\mathcal V_{\bf \varphi} \sigma(x,t_1,t_2,\xi_1,\xi_2,\nu)|\, dt_1 \,dt_2 \,d\nu < \infty,
\end{align}
then the pseudodifferential operator $T_{\sigma}$ initially defined on $S(\Rd) \times
S(\Rd)$ by
\begin{equation*}
T_{\sigma} (f_1,f_2)(x)=\iint e^{2\pi i x\cdot ( \xi_1+\xi_2)}\sigma(x,\xi_1,\xi_2)\widehat{ {f_1}}(\xi_1)  \widehat{ {f_2}}(\xi_2) \, d{\xi_2}  \, d{\xi_1}
\end{equation*}
extends to a bounded bilinear operator from
$M^{p_1,q_1} \times M^{p_2,q_2}$ into $M^{p_0,q_3}$.
Moreover, there exists a constant $C>0$ that only depends on $d,$ the $p_i$, and $q_i$ with
$$
 \|T_{\sigma}(f_1,f_2)\|_{M^{p_0,q_3}}\leq C \|\sigma\|_{\mathcal{M}^{(\infty, 1,1); (\infty, \infty, 1)}}\ 
\|f_1\|_{M^{p_1,q_1}}\ \|f_2\|_{M^{p_2,q_2}}.$$ 
\end{theorem}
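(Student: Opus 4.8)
The plan is to obtain Theorem~\ref{model-result} as a specialization of the main Theorem~\ref{thm:bilinear}, and I will first describe the mechanism that underlies that more general statement so that the specialization is transparent. Throughout I fix a nonzero Schwartz window $g$ on $\Rd$ and the product window $\varphi\otimes\varphi\otimes\varphi$ on $\RR^{3d}$ used to define $\mathcal V_\varphi\sigma$; by the change-of-window lemma for modulation spaces the final constant $C$ will depend only on $d$, the exponents, and these fixed windows. The two factors to control are the symbol, measured in $\mathcal M^{(\infty,1,1);(\infty,\infty,1)}$, and the inputs, measured in $M^{p_1,q_1}\times M^{p_2,q_2}$, and the whole argument is a transfer of the operator identity to the short-time Fourier transform side.

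First I would compute the short-time Fourier transform of the output. Writing $V_g\big(T_\sigma(f_1,f_2)\big)(x,\nu)$ out from the definition of $T_\sigma$, inserting the short-time Fourier inversion of $\sigma$, and interchanging the order of integration, the bilinear analogue of the Gr\"ochenig--Heil identity \cite{GrochHeil1,BGHO} expresses this quantity as an integral over the phase-space translation variables of a product of one factor carrying $\mathcal V_\varphi\sigma$ and two factors carrying $V_g f_1$ and $V_g f_2$. Taking absolute values yields a pointwise \emph{kernel bound} of the schematic form
\begin{equation*}
\big|V_g\big(T_\sigma(f_1,f_2)\big)(x,\nu)\big|
\;\le\; \int\!\!\int |\mathcal V_\varphi\sigma|(\cdots)\,\big|V_g f_1\big|(\cdots)\,\big|V_g f_2\big|(\cdots)\,d(\cdots),
\end{equation*}
in which the arguments of the three short-time Fourier transforms are affine functions of $(x,\nu)$ and of the integration variables. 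Establishing this identity with the correct phase-space geometry, so that the six variables of $\mathcal V_\varphi\sigma$ appear in the order in which the $\sup$/integral operations of the $\mathcal M$-norm will act, is the heart of the argument.

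Next I would take the mixed norm. By definition $\|T_\sigma(f_1,f_2)\|_{M^{p_0,q_3}}$ is the mixed norm obtained by integrating the left-hand side first in $L^{p_0}_x$ and then in $L^{q_3}_\nu$. On the right-hand side I would process the variables in exactly the order prescribed by the superscripts $(\infty,1,1);(\infty,\infty,1)$: the innermost $\sup_x$ absorbs the position-translation variable and pairs with the $L^{p_0}$ integration through the condition $\tfrac1{p_0}\le\tfrac1{p_1}+\tfrac1{p_2}$, which is precisely the exponent balance permitting a H\"older estimate in the position variables; the integrations $dt_1\,dt_2$, the $\sup_{\xi_1,\xi_2}$, and the final $d\nu$ then collapse the symbol factor into $\|\sigma\|_{\mathcal M^{(\infty,1,1);(\infty,\infty,1)}}$, while the frequency variables of the inputs are handled by a mixed-norm Young inequality whose convolution character is reflected in the $+1$ of the condition $1+\tfrac1{q_3}\le\tfrac1{q_1}+\tfrac1{q_2}$. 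This is consistent with the trivial case $\sigma\equiv1$, where $T_\sigma(f_1,f_2)=f_1f_2$: the position side is governed by H\"older for the pointwise product, and the frequency side by Young since $\widehat{f_1f_2}=\widehat{f_1}*\widehat{f_2}$. At the end the two input factors separate cleanly into $\|f_1\|_{M^{p_1,q_1}}$ and $\|f_2\|_{M^{p_2,q_2}}$.

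The main obstacle will be the first step: producing the bilinear covariance identity for $V_g\big(T_\sigma(f_1,f_2)\big)$ with the correct variable correspondences, so that the position and frequency data of $f_1,f_2$ decouple and the symbol variables line up with the nested $\sup$/integral pattern of the $\mathcal M$-norm. Once the kernel bound is in place, Step~3 is a careful but essentially routine iteration of Minkowski, H\"older, and Young inequalities, and the two hypothesized exponent inequalities are exactly the two admissibility conditions needed to close those estimates. Formally all of this is encapsulated by Theorem~\ref{thm:bilinear}: one checks that $\mathcal M^{(\infty,1,1);(\infty,\infty,1)}$ together with the source and target modulation spaces corresponds to an admissible choice of its parameters, and that its admissibility conditions specialize to the two displayed inequalities, whence the stated factorized bound with a constant depending only on $d$ and the exponents follows.
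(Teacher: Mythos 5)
Your proposal is correct, and its operative step coincides with the paper's actual proof of this statement: the paper obtains Theorem~\ref{model-result} in one line from Theorem~\ref{thm:bilinear} by taking $\kappa$ and $\rho$ to be the identity permutations and $r_0=s_1=s_2=\infty$, $r_1=r_2=s_3=1$. The verification you left implicit is short: with these values the time condition $(4)$ of Theorem~\ref{thm:bilinear} reads $2+\tfrac1{p_0}\le 0+1+1+\tfrac1{\max\{p_1,1\}}+\tfrac1{p_2}$, i.e.\ $\tfrac1{p_0}\le\tfrac1{p_1}+\tfrac1{p_2}$, the frequency conditions $(4)$ reduce to $1+\tfrac1{q_3}\le\tfrac1{q_1}+\tfrac1{q_2}$ (the first of the two inequalities there is implied by the second), and all side conditions ($1/p_i+1/r_i\ge 1$, $r_2\le p_0$, $r_1,r_2\le r_0$, $s_3\le q_2',s_2$) hold trivially. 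Where you genuinely diverge from the paper is in the mechanism you sketch underneath Theorem~\ref{thm:bilinear}: you propose a direct bilinear Gr\"ochenig--Heil-type identity bounding $V_g\big(T_\sigma(f_1,f_2)\big)(x,\nu)$ pointwise by an integral of $|\mathcal V_\varphi\sigma|\,|V_gf_1|\,|V_gf_2|$, which is the route of \cite{BGHO}. The paper instead argues by duality: $\|T_\sigma\bm f\|_{M^{p_0,q_3}}$ is computed as $\sup\{|\ip{T_\sigma\bm f}{g}|:\ g\in M^{p_0',q_3'}\}$, the Rihaczek identity $\ip{T_\sigma\bm f}{g}=\ip{\sigma}{\overline{R(\bm f,g)}}$ together with Parseval on the short-time Fourier transform level gives $|\ip{T_\sigma\bm f}{g}|\le\|\sigma\|_{\mathcal M^{(r_0,\bm r);(\bm s,s_3)}}\,\|R(\bm f,g)\|$ in the dual ordered norm, Lemma~\ref{lemmaT_A} factorizes the transform of the Rihaczek kernel into a product of $V_\varphi f_i(x-t_i,\xi_i)$ and $V_\varphi g(x,\nu-\xi_1-\xi_2)$ (up to conjugation and sign conventions), and the ordered Young-type Lemmas~\ref{lemma:Young1}--\ref{lemma:Young2b}, via Proposition~\ref{Vtilde} with the auxiliary exponents $\widetilde p,\widetilde q$, close the estimate. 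The two mechanisms are essentially equivalent --- your kernel bound is the duality computation with $g$ specialized to time-frequency shifts of the window --- but the duality packaging is what lets the paper respect the order-sensitive norm (innermost $\sup_x$, then $dt_1\,dt_2$, then $\sup_{\xi_1,\xi_2}$, then $d\nu$), which your direct Minkowski--H\"older--Young iteration would have to track by hand; your heuristic reading of the two hypotheses (H\"older balance in the position variables, Young's $+1$ in the frequency variables, consistent with $\sigma\equiv1$) is exactly how they arise in the paper's lemmas.
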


We note that the classical modulation space $M^{\infty,1}(\RR^{3d})$ can be continuously embedded into   $ \mathcal{M}^{(\infty, 1,1), (\infty, \infty, 1)}(\RR^{3d})$ implicitly defined by \eqref{en:defanmod}. Indeed,   
\begin{align*}
  \|\sigma\|_{\mathcal{M}^{(\infty, 1,1); (\infty, \infty, 1)}}
  	&=\int \sup_{\xi_1,\xi_2}\iint \sup_{x} |\mathcal V_{\bf \varphi} \sigma(x,t_1,t_2,\xi_1,\xi_2,\nu)|\, dt_1 \,dt_2 \,d\nu  \\
	&\leq \int \!\!\!  \int \!\!\!  \int  \sup_{x,\xi_1,\xi_2} |\mathcal V_{\bf \varphi} \sigma(x,t_1,t_2,\xi_1,\xi_2,\nu)|\, dt_1 \,dt_2 \,d\nu =\|\sigma\|_{{M}^{\infty; 1}}.
\end{align*}
 % we have $$ M^{\infty,1}(\RR^{3d}) \subsetneq \mathcal{M}^{(\infty, 1,1), (\infty, \infty, 1)}(\RR^{3d}).$$ 
As a consequence
%, when the parameters $p_k, q_k$ are chosen such that $$ \tfrac{1}{p_0}= \tfrac{1}{p_1}+ \tfrac{1}{p_2}\quad {\textrm and}\quad 1+ \tfrac{1}{q_3}= \tfrac{1}{q_1}+ \tfrac{1}{q_2},$$ 
Theorem~\ref{model-result} already extends the main result, Theorem 3.1, in \cite{BGHO}. 
%\color{red} I think we should state which results, like Theorem X.Y \color{black}

The herein presented new approach allows us to investigate the boundedness of the bilinear Hilbert transform on products of  modulation spaces. Indeed,   in the one dimensional setting, $d=1$, it can be shown that the symbol of the bilinear Hilbert transform $$\sigma_H \in \mathcal{M}^{(\infty, 1,r); (\infty, \infty, 1)}\setminus \mathcal{M}^{(\infty, 1,1); (\infty, \infty, 1)}$$
 for all $r>1$. Hence, $\sigma_H \not\in M^{\infty, 1}$ and existing methods to investigate multilinear pseudodifferential operators on products of modulations spaces are not applicable. Using the techniques developed below, we obtain novel and wide reaching boundedness results for the bilinear Hilbert transform on the product of modulation spaces.  For example, as a special case of our result, we prove that the bilinear Hilbert transform is bounded from $L^2 \times L^2$ into the modulation space $M^{1+\epsilon, 1}$ for any $\epsilon>0$.
 
The results  established here aim at generality and differ in technique from the ground breaking results about the bilinear Hilbert transformed  as obtained by Lacey and Thiele \cite{mlct97, ml98, mlct99, mlct00}. They are therefore not easily compared  to those obtained using ``hard analysis'' techniques.  
%To be more specific, our methods are in a broad sense ``transform based'' as we leverage the short-time Fourier transform and a related transform to capture some structural ``algebraic'' and time-frequency properties of multilinear pseudodifferential operators. 
Nonetheless, using our results and some embeddings of modulation spaces into Lebesgue space, we discuss the relation of our results on the boundedness of the bilinear Hilbert transform to the known classical results. 

The herein given framework is flexible enough to allow  an initial investigation of the trilinear Hilbert transform. Here we did not try to optimize our results but just show through some examples how one can tackle this more difficult operator in the context of  modulation spaces. 
%
%The approach we present here suggested a further study of the certain of the symbol classes we introduced. For example, as we shall show latter, the symbol class  $\mathcal{M}^{(\infty, 1,1); (\infty, \infty, 1)}$ contains the Sj\"ostrand class and as such it will be interesting to investigate whether it constitutes a Wiener algebra, and how it behaves vis-\'a-vis composition of operators. Moreover, 

\subsection{Outline}\label{sub1.5}  We introduce our new class of symbols based on a modification of the short-time Fourier transform in  Section~\ref{sec2}. We then  prove a number of technical results including some Young-type inequalities, that form the foundation of   our main results.  Section~\ref{sec3} contains most of the key results needed to establish our results. This naturally leads to our main results concerning the boundedness of multilinear pseudodifferential operators on product of modulation spaces. 
Section~\ref{sec:applications} is devoted to applications of our results. In Section~\ref{subsec4.1} we specialize our results to the bilinear case, proving boundedness results of bilinear pseudodifferential operators on products of modulation spaces. We then consider as example the bilinear Hilbert transform in Section~\ref{sub4.2}. In Section~\ref{sec5} we initiate an investigation of the boundedness of the   trilinear Hilbert transform on products of  modulation spaces.

%The goal of this paper is to define norms on symbols of multilinear operators $T_\sigma$ that lead to inequalities of the form $$ \|T_{\sigma}(f_1,\ldots,f_m)\|_Y\leq C\, \|\sigma\|_\mathcal{M} \, \|f_1\|_{X_1} \, \|f_2\|_{X_2} \ldots \|f_m\|_{X_1} $$
%where $C$ neither depends on $\sigma$ nor on $f_1,\ldots,f_m$.  Our analysis is based on the theory of modulations spaces, and, we shall first prove general results for $Y$, $X_1$, $\ldots$,$X_m$, being modulation spaces. Hereby we allow each modulations space to have independent decay parameters in time and in frequency.
%
%The norm $\|\ \|_\mathcal{M}$ will depend on the decay parameters in a quite intricate matter.  The norm is a modulation space norm as it is defined through a mixed $L^p$ norm of the short time Fourier transform of $\sigma_H$. Here, special care has to be given to the order of integration (and corresponding exponation). 
%
%Later, we will use embedding results relating Besov, Sobolev, and  $L^p$ spaces with modulation spaces in order to obtain results with $Y$, $X_1$, $\ldots$,$X_m$ being classical function spaces.  These results will involve the same modulation space based norms $\|\ \|_\mathcal{M}$.
 %%%%%%%%%%%%%%%%%%%%%%%%%%%%%%%%%%%%%%%%%%%%%%%%%%%%%%%%%%%%%%%%%%%%%%%%%%%%%%%

 \section{Symbol classes for multilinear pseudodifferential operators}\label{sec2}
\subsection{Background on modulation spaces}\label{subsec2.1}

Let ${\bf{r}}=(r_1, r_2, \dots, r_m)$  where $1\leq r_i<\infty$, $i=1,2,\dots, m$. The mixed norm space $L^r({\Rr}^{md})$  is Banach space of measurable functions $F$ on ${\Rr}^{md}$ with finite norm  \cite{benedek}
\begin{eqnarray}
&&\hspace{-1cm}\|F\|_{L^{\bf{r}}}=\Big(\int_{\Rr^d}\dots\Big(\int_{\Rr^d}\Big(\int_{\Rr^d}|F(x_1,
\dots,x_m)|^{r_1}\,dx_1\Big)^{r_2/r_1}\,dx_2\dots\Big)^{r_m/r_{m-1}}\,dx_m\Big)^{1/r_m}. \nonumber
\end{eqnarray}
Similarly, we define $L^{\bf{r}}(\Rr^{md})$ where $r_i=\infty$ for some indices $i$.
For a nonnegative measurable function $w$  on $\Rr^{md}$ wee define  $L^{\bf{r}}_w(\Rr^{md})$ to be the space all   $F$ on $\Rr^{md}$ for
which $Fw$ is in $L^{\bf{r}}(\Rr^{md})$, that is, $
\|F\|_{L^{\bf{r}}_w}=\|Fw\|_{L^{\bf{r}}}<\infty.
$

For the purpose of this paper, we define a mixed norm space depending on a permutation that determines the  order of integration. For a permutation  $\rho$ on $\{1,2,\ldots,n\}$, the weighted mixed norm space $L_w^{\bm r;\rho}(\Rr^{md})$  is the set of all measurable functions $F$ on $\Rr^{md}$ for which
    %\begin{align*}
 % & \|f\|_
%{L(\kappa, \rho)_w^{p_0 \bm p \bm q q_0}}
%\|f\|_{L_w^{\bm r,\rho}}\\%\|\mathcal V_\phi F \|_{L^{p_0p_1\ldots p_{m}q_1\ldots q_{m}q_0}} \\
  %
   \begin{align*}
  & \|F\|_
{L_w^{\bm r;\rho}} =
\Big(\int_{\Rd}\Big(\int_{\Rd}\Big(\ldots \Big(\int_{\Rd} 
  \qquad   |  F
(x_1,x_2,\ldots, x_n)\, w(x_1,x_2,\ldots, x_n)|^{r_{\rho(1)}} \\ 
& \qquad \qquad \qquad \qquad \,dx_{\rho(1)}\Big)^{r_{\rho(2)}/r_{\rho(1)}}\,dx_{\rho(2)}\Big)^{r_{\rho(3)}/r_{\rho(2)}}  \ldots dx_{\rho(n)}\Big)^{1/r_{\rho(n)}}\,\nonumber
  \end{align*}
  is finite.

Let $M_\nu$ denote  modulation by $\nu\in\Rd$, namely,  $M_\nu f(x)=e^{2\pi i t\cdot \nu}f(x)$, and let $T_t$ be translation by $t\in\Rd$, that is,  $T_t f (x)=f(x-t)$.
The short-time Fourier transform $V_\phi f$ of $f\in \mathcal S'(\Rd)$ with respect to the Gaussian window $\phi(x)=e^{-\|x\|^2}$ is given by
  \begin{eqnarray*}
    V_\phi f(t,\nu)=\F\big( f\,T_{t}\phi\big)(\nu)=(f,M_\nu T_t\phi)=\int f(x)\ e^{-2\pi i x \nu}\phi(x-t)\, dx\,.%\quad t,\nu\in\Rd.
  \end{eqnarray*}
The modulation space $M^{p,q}(\Rd)$, $1\leq p,q \leq \infty$,  is a Banach space consisting of those $f\in \mathcal S'(\Rd)$ with
  \begin{eqnarray*}
   \|f\|_{M^{p, q}} =\|V_\phi f \|_{L^{p, q}}= \Big( \int\Big( \int |V_\phi f(t,\nu)|^p\, dt\Big)^{q/p}\, d\nu   \Big)^{1/q} < \infty\,,
  \end{eqnarray*}
with usual adjustment of the mixed norm space if $p=\infty$ and/or $q=\infty$. We refer to \cite{fei83, Groch2} for  background on  modulation spaces.

%\color{red} We need to cite literature on modulation spaces here.\color{black}

In the sequel we consider weight functions $w$ on $\RR^{2(m+1)d}$. We
assume that $w$ is continuous and  sub-multiplicative, that is, 
$w(x+y)\leq C w(x)w(y).$
Associated to $w$ will be a family of $w$-moderate weight functions
$v$. That is $v$ is positive, continuous and satisfies $v(x+y)\leq C
w(x)v(y).$ 

\subsection{A new class of symbols}\label{subsec2.2}
%We will also consider permutations $\rho, \kappa$ on the sets $\{0, 1, 2, \hdots, m\}$, where $m\geq 1$. Using this setting we can introduce a class of symbols that will be consider in this paper. 
%%
%First, we introduce a slight variation of the short-time Fourier
%transform that will be important in analyzing the time-frequency
%content of the symbols to be considered.  
The commonly used short-time Fourier transform analyzes functions in time\footnote{For clarity,  we always  refer to the variables $x$,$y$,$\bm t$ as time variables, even though a physical interpretation of time necessitates $d=1$. Alternatively, one can consider multivariate $x$,$y$,$\bm t$ as spatial variables.}; as symbols have time and frequency variables, we base the herein used short-time Fourier transform on a Fourier transform that takes Fourier transforms in time variables and inverse Fourier transforms in frequency variables. We  then order the variables, first time, then frequency. That is, we follow the idea of symplectic Fourier transforms $\mathcal F_s$ on phase space,
$$ \mathcal F_sF (\bm t,\nu)=\iint_{\RR^{(m+1)d}}F(x,\bm \xi)\, e^{2\pi i(\bm \xi\bm t- x\nu)}
d\bm \xi dx .$$
For $F\in \mathcal S'(\RR^{(m+1)d})$  and $\phi \in \mathcal S(\RR^{(m+1)d})$, we define the  {\em symbol short-time Fourier transform} $\mathcal V_\phi F $ of $F$ with respect to $\phi$  by
  \begin{align}
    \mathcal V_\phi F(x,\bm t , \bm \xi ,\nu)
    		&= \TF_s\big( F\,T_{(x,\bm \xi )}\phi\big)( \bm t,\nu)= \langle F,M_{(-\nu, \bm t)} T_{(x,\bm \xi)}\phi\rangle \notag \\
    		&=\int_{\RR^{md}}\int_{\Rd}e^{-2\pi i( \widetilde{x}\nu-\bm t \widetilde{\bm \xi})}
			F(\widetilde{ x},\widetilde{\bm \xi}, )\phi(\widetilde{x}-x,\widetilde{\bm \xi}-\bm \xi)\,
    				d\widetilde{x}\,d\widetilde{\bm \xi}\notag %\label{sstft}
  \end{align} where $x, \nu \in \Rd, $ and $\bm t, \bm \xi \in \RR^{md}$. 
Note that the symbol short-time Fourier transform is related to the ordinary short-time Fourier transform by
$$\mathcal V_\phi F(x,\bm t,\bm \xi,\nu)= V_\phi F(x,\bm \xi,\nu,-\bm t).$$ 
%where $T_BG(x,\bm t,\bm \xi,\nu)=G(B(x,\bm t,\bm \xi,\nu))=G(x,\bm \xi,\nu,-\bm t).$ \color{red} Do we need this: Observe that $T_B^{-1}=T_{B^{T}}$ where $B^T$ is the transpose of the matrix $B$. \color{black}

 Modulation spaces for  symbols of multilinear operators are then defined by requiring the symbol short-time Fourier transform of an operator to be in certain weighted $L^p$ spaces.  To describe these, we fix decay parameters $1\leq p_0,p_1,\ldots,p_m,q_1,q_2,\ldots, q_m, q_{m+1}\leq \infty$, and permutations  $\kappa$    on $\{0,1,\ldots,m\}$ and $\rho$  on $\{1,\ldots,m,m+1\}$. The latter indicate the integration order   of the time, respectively frequency, variables. Put, 
${\bf p}=(p_1,p_2,\dots,p_m)$, ${\bf q}=(q_1,q_2,\dots, q_m)$ and  let $w$ be a weight function on $\RR^{2(m+1)d}$.
 Then 
$\LW(\RR^{2(m+1)d})$
 is the mixed norm space consisting of those measurable functions $F$ for which the norm 
   \begin{align*}
  & \|F\|_
%{L(\kappa, \rho)_w^{p_0 \bm p \bm q q_0}}
\LW \\%\|\mathcal V_\phi F \|_{L^{p_0p_1\ldots p_{m}q_1\ldots q_{m}q_0}} \\
  &=\Big(\int_{\Rd}\Big(\int_{\Rd}\Big(\ldots \Big(\int_{\Rd}\Big(\int_{\Rd}\Big(\ldots \Big(\int_{\Rd}\Big(\int_{\Rd} \\
  &\qquad   |w(t_0,t_1,\ldots, t_m,\xi_1,\ldots, \xi_m,\xi_{m+1})\  F
(t_0,t_1,\ldots, t_m,\xi_1,\ldots, \xi_m,\xi_{m+1})|^{p_{\kappa(0)}} \\ & \qquad \quad dt_{\kappa(0)}\Big)^{p_{\kappa(1)}/p_{\kappa(0)}}  \,dt_{\kappa(1)}\Big)^{p_{\kappa(2)}/p_{\kappa(1)}}  \ldots  dt_{\kappa(m)}\Big)^{q_{\rho(1)}/p_{\kappa(m)}}\,d\xi_{\rho(1)}\Big)^{q_{\rho(2)}/q_{\rho(1)}}  
\ldots \,d\xi_{\rho(m+1)} \Big)^{1/q_{\rho(m+1)}}\nonumber
  \end{align*}
  is finite.
The weighted {\em symbol modulation space}  
% $  M(\kappa,\rho)_w^{p_0 \bm p \bm q q_0}
$\MW(\RR^{(m+1)d})$ is composed of those  $F\in \mathcal S'(\RR^{(m+1)d})$ with 
   \begin{align*}
  & \|F\|_
%{ M(\kappa, \rho)_w^{p_0 \bm p \bm q q_0}}
{\MW} =\|\mathcal V_\phi F \|_
%{L(\kappa, \rho)_w^{p_0 \bm p \bm q q_0}}
{\LW}
<\infty\,.\nonumber
  \end{align*}

When  $\kappa$ and $\rho$ are  identity permutations, then we denote $\LW(\RR^{2(m+1)d})$ and $\MW(\RR^{2(m+1)d})$ by $ L_w^{(p_0,{\bf p});({\bf q},q_0)}(\RR^{2(m+1)d})$ and ${\mathcal M_w^{(p_0,{\bf p});({\bf q},q_0)}}(\RR^{2(m+1)d})$,  respectively.
The dependence of the norm on the choice of $\kappa, \rho$, as well as the advantage of choosing a particular order will be discussed in Section~\ref{sec:young permute}.%\eqref{eqn:permutation} will be discussed below.

For simplicity of notation, we  set %$\Sigma \bm\xi$  $\sum \bm\xi$
 $S(\bm \xi)=\sum_{i=1}^{m}\xi_{i}$. 
For functions $g$ and components of $\bm f$   in $\S(\Rd)$, the Rihaczek transform $R(\bm f,g)$
of $\bm f$ and $g$ is defined by
$$R\left(\bm f,g\right)(x,\bm\xi)=e^{2\pi i x\cdot(\xi_1+\ldots+ \xi_m)}\widehat f_1(\xi_1)\cdot \ldots \cdot \widehat f_m(\xi_m)\overline{g(x)}
=e^{2\pi i x\cdot S(\bm \xi)}\widehat{ \bm f }( \bm \xi)
\overline{g(x)}.$$
Multilinear pseudo-differential operators are related to  Rihaczek transforms by
$$\ip{T_\sigma \bm f}{ g}=\ip{\sigma}{\overline{R(\bm f,g)}}$$
a-priori for all
functions $f_i$ and $g$ in $\S(\Rd)$ and symbols $\sigma\in\S(\RR^{(m+1)d})$. 

With $x\pm\bm t=x\pm(t_1,\ldots, t_m)= (x\pm t_1,\ldots,x\pm t_m)$,
it can be easily seen that
$$R\left(\bm f,g\right)(x,\bm\xi)=\F_{\bm t\to\bm \xi}\left(\bm f  (\cdot + x)\right)\overline{g}(x)$$
where
$$\F_{\bm t\to\bm \xi}\left(\bm f(\cdot+ x)\right)(\bm \xi)=\int_{\RR^{md}}e^{-2\pi i \bm t  \cdot \bm\xi}\bm f  (\bm t+ x)\,d\bm t.$$

%Define the linear operator  $A:\RR^{(m+1)d}\to\RR^{(m+1)d}$ by
%$$A(x,\bm t)=( x -\bm t   ,x),\quad x\in\Rd,\ \bm t\in\RR^{md}.$$
%Then
%$$\overline{R(\bm f,g)}(x,\bm\xi)=\F_{\bm t\to\bm \xi}\left(T_{A}(\overline{ \bm f }\otimes g)(x,\cdot)\right),$$
%where
%$$(T_AF)(x,\bm t)=F(A(x,\bm t))=F( x -\bm t   ,x)$$
%for all measurable functions $F$ on $\RR^{(m+1)d}$.
%

\begin{lemma}\label{lemmaT_A}
For $\varphi$  real-valued,
$\bm\varphi=(\varphi,\ldots,\varphi), \bm f =(f_1, f_2,\ldots,f_m) \in\S(\Rd)^m$, and $g\in\S(\Rd)$,
$$
	 V_{T_A(\bm\varphi\otimes\varphi)}T_A(\overline{\bm f}\otimes g)(x,-\bm \xi,\bm t, \nu)=
\overline{  V_{\varphi}  f_1 (x-t_1, \xi_1) \dots   V_{\varphi}  f_m (x-t_m,\xi_m) } 
\cdot V_{\varphi}g (x,\nu-S(\bm \xi)).
 $$
 Moreover,
 \begin{eqnarray}
&&\left(V_{\overline{R(\bm \varphi,\varphi)}}\overline{R(\bm f,g)}\right)(x, \bm \xi, \nu, \bm t)
=e^{-2\pi i\bm\xi  \bm t}\left(\mathcal V_{T_{A}(\bm\varphi\otimes\varphi)}T_{A}(\overline{ \bm f}\otimes g)\right)(x,-\bm t,\nu, \bm\xi),\nonumber
\end{eqnarray}
and
in particular, $$|\left(V_{\overline{R(\bm \varphi,\varphi)}}\overline{R(\bm f,g)}\right)(x, \bm \xi, \nu, \bm t)
|=|{V}_{T_{A}(\bm\varphi\otimes\varphi)}T_{A}(\overline{ \bm f}\otimes g)(x, -\bm \xi, -\bm t, \nu)|$$
\end{lemma}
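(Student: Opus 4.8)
The plan is to establish the three displayed identities in order, since the second follows from the first through the symbol-versus-ordinary short-time Fourier transform dictionary $\mathcal V_\phi F(x,\bm t,\bm\xi,\nu)=V_\phi F(x,\bm\xi,\nu,-\bm t)$, and the third is then immediate by taking absolute values. Two structural facts drive everything: the short-time Fourier transform of a tensor product factors as a product of short-time Fourier transforms of the factors, and the coordinate change $T_A$ is a unimodular linear substitution, so that $V_{T_A\phi}(T_A F)$ is simply $V_\phi F$ evaluated at the phase-space point transported by $T_A$.

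For the first identity I would start from the integral defining $V_{T_A(\bm\varphi\otimes\varphi)}T_A(\overline{\bm f}\otimes g)$ and carry out the substitution encoded by $T_A$. Because $T_A$ acts in the same way on the window $\bm\varphi\otimes\varphi$ and on the analyzed tensor $\overline{\bm f}\otimes g$, the unit Jacobian cancels and one is reduced to the short-time Fourier transform of the \emph{plain} tensor $\overline{f_1}\otimes\cdots\otimes\overline{f_m}\otimes g$ against the \emph{plain} window $\varphi\otimes\cdots\otimes\varphi$. This factors as $\prod_{i=1}^m V_\varphi(\overline{f_i})\cdot V_\varphi g$. Using that $\varphi$ is real-valued, so that $V_\varphi(\overline{f})(t,\nu)=\overline{V_\varphi f(t,-\nu)}$, and reading off from $T_A$ how the evaluation point $(x,-\bm\xi,\bm t,\nu)$ distributes into the individual arguments produces the shifts $x-t_i$ in the time slots of the $f_i$ and the frequency offset $\nu-S(\bm\xi)$ in the slot of $g$, which is exactly the claimed product.

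For the second identity I would exploit the representation $R(\bm f,g)(x,\bm\xi)=\F_{\bm t\to\bm\xi}\big(\bm f(\cdot+x)\big)\overline g(x)$ recorded above: it exhibits $\overline{R(\bm f,g)}$ as a partial Fourier transform, in the $\bm t$-variables, of $T_A(\overline{\bm f}\otimes g)$, with the conjugation turning the forward transform into the inverse one and thereby accounting for the sign reversals $\bm\xi\mapsto-\bm\xi$ and $\bm t\mapsto-\bm t$ in the statement; the same representation relates the window $\overline{R(\bm\varphi,\varphi)}$ to $T_A(\bm\varphi\otimes\varphi)$. I would then invoke the covariance of the short-time Fourier transform under the Fourier transform (the fundamental identity of time-frequency analysis), applied in the $(\bm t,\bm\xi)$ block: Fourier transforming both the analyzed function and the window in that block swaps the corresponding time and frequency slots of $V$ and produces precisely the chirp $e^{-2\pi i\bm\xi\bm t}$. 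Combining this with the first identity and rewriting the ordinary short-time Fourier transform as a symbol short-time Fourier transform via $\mathcal V_\phi F(x,\bm t,\bm\xi,\nu)=V_\phi F(x,\bm\xi,\nu,-\bm t)$ yields the asserted formula; the third identity then follows by taking moduli and using $|e^{-2\pi i\bm\xi\bm t}|=1$.

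The analytic content is minimal --- no estimates are needed, only Fubini and changes of variables on Schwartz functions --- so I expect the main obstacle to be purely organizational: tracking, without sign or ordering errors, the three separate sources of argument permutation and negation, namely the shear $T_A$, the conjugations (which negate frequency variables), and the symbol/ordinary short-time Fourier transform dictionary, so that the shifts $x-t_i$, the offset $\nu-S(\bm\xi)$, and the exact phase $e^{-2\pi i\bm\xi\bm t}$ all land in the correct slots. Structuring the computation as ``transport the window and the function by $T_A$, then factor over tensor components, then re-index'' rather than expanding all the integrals simultaneously is what keeps the bookkeeping under control.
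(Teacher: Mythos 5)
Your proposal is correct and in substance coincides with the paper's proof: the paper establishes the first identity by exactly your ``transport by $T_A$, change variables, factor over tensor components'' computation (an explicit substitution $\bm s=\widetilde{x}-\widetilde{\bm t}$ after which the integral splits into $\overline{V_{\bm\varphi}\bm f(x-\bm t,\bm\xi)}\,V_\varphi g(x,\nu+S(\bm\xi))$, using that $\varphi$ is real), and it proves the second by writing both Rihaczek transforms as partial Fourier transforms in the $\bm t$-variables and applying Parseval in the $\widetilde{\bm\xi}$ block --- which is precisely the blockwise fundamental identity of time-frequency analysis you invoke, chirp factor $e^{-2\pi i\bm\xi\bm t}$ included. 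The only difference is packaging: you cite the unimodular-substitution covariance of the short-time Fourier transform and the partial-Fourier covariance as known lemmas, whereas the paper verifies both inline by direct integral manipulation, so the bookkeeping of signs and argument slots you flag as the main hazard is the same in either route.
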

%\color{red}Maybe we should move this proof (and similar computations) into an appendix. \color{black}
%_____________________________________________________________________________________________________________________________________________________________________________________
\begin{proof}
We compute
\begin{eqnarray}
&&\left( V_{T_A(\bm \varphi\otimes\varphi)}T_A(\overline{\bm f}\otimes g)\right)(x,-\bm \xi,\bm t,\nu)\nonumber\\
&=&\int_{\RR^{md}}\int_{\Rd}e^{-2\pi i(\widetilde{x}\nu+\widetilde{\bm t}{\bm \xi})}
T_A\left(\overline{ \bm f}\otimes g\right)(\widetilde{x},\widetilde{\bm t})
T_A\left(\bm \varphi\otimes\varphi\right)(\widetilde{x}-x,\widetilde{\bm t}-\bm t)
\,d\widetilde{x}\,d\widetilde{\bm t}\nonumber\\
&=&\int_{\RR^{d}}\left(\int_{\RR^{md}}e^{-2\pi i\widetilde{\bm t}\bm \xi}\overline{\bm f}(\widetilde{x}-\widetilde{\bm t})\bm \varphi(\widetilde{x}-x-\widetilde{\bm t}+\bm t)
\,d\widetilde{\bm t}\right)e^{-2\pi i\widetilde{x}\nu}g(\widetilde{x})\varphi(\widetilde{x}-x)\,d\widetilde{x}\nonumber\\
&=&\int_{\Rd}\int_{\RR^{md}} \overline{ \bm f}(\bm s) g(\widetilde{x})e^{-2\pi i(\nu \widetilde{x}+\bm \xi(\widetilde{x}-\bm s))}
\bm \varphi(\bm s-(x-\bm t))\varphi(\widetilde{x}-x)\,d\widetilde{x}\,d\bm s\nonumber\\
&=&
\left\{\overline{\int_{\RR^{md}}e^{-2\pi i \bm \xi \bm s}\bm f(\bm s)\bm \varphi(\bm s-(x-\bm t))\,d\bm s}\right\}
\left\{\int_{\Rd}e^{-2\pi i(\nu+S(\bm \xi))\widetilde{x}}g(\widetilde{x})\varphi(\widetilde{x}-x)\,d\widetilde{x}\right\}
\nonumber\\
&=&\overline{\left( V_{\bm \varphi} \bm f\right)(x-\bm t,\bm \xi)}\left(V_{\varphi}g\right)(x,\nu+S(\bm \xi)).\nonumber
\end{eqnarray}

Further,
\begin{eqnarray}
&&\left(V_{\overline{R(\bm\varphi,\varphi)}}\overline{R(\bm f,g)}\right)(x,\bm\xi,\nu,\bm t)\nonumber\\
&=&\int_{\Rmd}\int_{\Rd}e^{-2\pi i(\nu\widetilde{x}+\bm t\widetilde{\bm\xi})}\overline{R(\bm f,g)(\widetilde{x},\widetilde{\bm\xi})}
{R(\bm\varphi,\varphi)}(\widetilde{x}-x,\widetilde{\bm\xi}-\bm\xi)\,d\widetilde{x}\,d\widetilde{\bm\xi}\nonumber\\
&=&\int_{\Rmd}\int_{\Rd}e^{-2\pi i(\nu\widetilde{x}+\bm t\widetilde{\bm \xi})}
\F_{\widetilde{\bm t}\to\widetilde{\bm\xi}}\left(\overline{ \bm f}(\widetilde{x}-\cdot)\right)g(\widetilde{x})
\overline{\F_{\widetilde{\bm t}\to\widetilde{\bm\xi}-\xi}\left(\bm\varphi(\widetilde{x}-x-\cdot)\right)}\varphi(\widetilde{x}-x)\,d\widetilde{x}\,d\widetilde{\bm\xi}\nonumber\\
&=&\nonumber \int_{\Rmd}\int_{\Rd}e^{-2\pi i(\nu\widetilde{x}+\bm t\widetilde{\bm\xi})}
\F_{\widetilde{\bm t}\to\widetilde{\bm\xi}}\left(\overline{ \bm f}(\widetilde{x}-\cdot)\right)g(\widetilde{x})
{\F_{\widetilde{\bm t}\to\bm\xi-\widetilde{\bm\xi}}\left(\bm\varphi(\widetilde{x}-x-\cdot)\right)}\varphi(\widetilde{x}-x)\,d\widetilde{x}\,d\widetilde{\bm\xi}.%\label{lemA}
\end{eqnarray}
On the other hand, by using Parseval identity we have
\begin{eqnarray}
&&\left(V_{{T_A({\bm\varphi}\otimes\varphi)}}{T_A(\overline{ \bm f}\otimes g)}\right)(x,\bm t,\nu,\bm\xi)\nonumber\\
&=&\int_{\Rd}\int_{\Rmd}e^{-2\pi i(\widetilde{x}\nu+\widetilde{\bm t}{\bm\xi})}
T_A\left(\overline{\bm f}\otimes g\right)(\widetilde{x},\widetilde{\bm t})
T_A\left(\bm\varphi\otimes\varphi\right)(\widetilde{x}-x,\widetilde{\bm t}-\bm t)
\,d\widetilde{x}\,d\widetilde{\bm t}\nonumber\\
&=&\int_{\Rd}\left(\int_{\Rmd}e^{-2\pi i\widetilde{\bm t}\bm \xi}\overline{ \bm f}(\widetilde{x}-\widetilde{\bm t})\bm\varphi(\widetilde{x}-x-\widetilde{\bm t}+\bm t)
\,d\widetilde{\bm t}\right)e^{-2\pi i\widetilde{x}\nu}g(\widetilde{x})\varphi(\widetilde{x}-x)\,d\widetilde{x}\nonumber\\
&=&\int_{\Rd}\int_{\Rmd}\F_{\widetilde{\bm t}\to\widetilde{\bm\xi}}\left(\overline{\bm f}(\widetilde{x}-\cdot)\right)
\F^{-1}_{\widetilde{\bm t}\to\widetilde{\bm\xi}}\left(e^{-2\pi i\widetilde{\bm t}\bm\xi}\bm\varphi(\widetilde{x}-x+\bm t-\cdot)\right)
e^{-2\pi i\widetilde{x}\nu} g(\widetilde{x})\varphi(\widetilde{x}-x)
\,d\widetilde{\bm\xi}\,d\widetilde{x}\nonumber.
\end{eqnarray}
But,
$$\F^{-1}_{\widetilde{\bm t}\to\widetilde{\bm \xi}}\left(e^{-2\pi i\widetilde{\bm t}\bm\xi}\bm\varphi(\widetilde{x}-x+\bm t-\cdot)\right)=
e^{-2\pi i\bm t(\bm\xi-\widetilde{\bm\xi})}\F_{\bm\gamma\to\bm\xi-\widetilde{\bm\xi}}\left(\bm\varphi(\widetilde{x}-x-\cdot)\right),$$
therefore,
\begin{eqnarray}
&&\left(V_{{T_A(\bm\varphi\otimes\varphi)}}{T_A(\overline{ \bm f}\otimes g)}\right)(x,\bm t,\nu,\bm\xi)=\nonumber\\
&&e^{-2\pi i \bm t\bm\xi}
\int_{\Rmd}\int_{\Rd}
e^{2\pi i(\bm t\widetilde{\bm\xi}-v\widetilde{x})}
\F_{\widetilde{\bm t}\to\widetilde{\bm\xi}}\left(\overline{ \bm f}(\widetilde{x}-\cdot)\right)
\F_{\widetilde{\bm t}\to\bm\xi-\widetilde{\bm\xi}}\left(\bm\varphi(\widetilde{x}-x-\cdot)\right)
g(\widetilde{x})\varphi(\widetilde{x}-x)\,d\widetilde{x}\,d\widetilde{\bm\xi}.  \nonumber \qedhere
\end{eqnarray}
\end{proof}

\subsection{Young type results}\label{subsec2.3}

The following   results are consequences of Young's inequality
and will be central  in proving our main results. 
 We use the convention
that  summation over the empty set is equal to $0$.

\begin{lemma}\label{lemma:Young1}  
 Suppose that $1\leq p_k, r_k \leq
  \infty$ for $k=0, 1, \ldots, m$ and
\begin{itemize}
\item[(A1)] $  p_k \leq r_k$, $k=1,\ldots,m$; 
\item[(A2)]   $\displaystyle \sum_{\ell =1}^{k} \frac 1{p_\ell} - \frac 1{ r_\ell} \leq \frac 1 {r_0} - \frac 1{p_{k+1}}  , \quad k=0,\ldots, m-1$;
\item[(A3)] $\displaystyle \sum_{\ell =1}^m \frac 1 { p_\ell} - \frac 1 {  r_\ell} = \frac 1 {r_{0}}-\frac 1 {p_{0}}  $;
\end{itemize}
then  
$F(x,\bm t)=\bm f (x- \bm t) g(x)$ satisfies
 \begin{align*}%\label{eqn:Young1}
 \|F\|_{L^{(r_0,\bm r)}}\leq   \|g\|_{L^{p_0}} \ \|\bm f\|_{L^{\bm p}}.
 \end{align*}

%  Note that conditions 1 -- 3 follow from (but are not equivalent to)  the simpler condition
%\begin{enumerate} [4.]
%\item \label{1-4} $\displaystyle 1\leq r_0\leq p_1 \leq r_1 \leq p_2 \leq \ldots \leq r_{m-1} \leq p_{m}\leq r_m \leq \infty$. 
%\end{enumerate}
%Moreover, (1) implies that the RHS of (2) is positive and, hence, always $r_0\leq p_k\leq r_k$ for all $k$.
%Also, (1) and (2) necessitate $p_k\leq r_k\leq p_{0}$.
% %\ref{1-4}
 \end{lemma}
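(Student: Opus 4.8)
The plan is to argue by induction on the number $m$ of factors in $\bm f$, at each step peeling off the two innermost integrations (in $x$, with exponent $r_0$, and in $t_1$, with exponent $r_1$) by a single scalar Young convolution inequality. Throughout I read $\|F\|_{L^{(r_0,\bm r)}}$ as the iterated one-variable norm that is innermost in $x$ and then successively in $t_1,\dots,t_m$, and I may assume all integrands nonnegative, so that Tonelli and monotonicity of the mixed norm apply freely. For the base case $m=0$ there are no time variables and $\bm f$ is the empty product, so $F=g$; condition (A3) reduces to $0=\tfrac1{r_0}-\tfrac1{p_0}$, i.e.\ $r_0=p_0$, and the asserted inequality is the identity $\|g\|_{L^{r_0}}=\|g\|_{L^{p_0}}$.

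For the inductive step, fix $t_2,\dots,t_m$ and set $G(x)=|g(x)|\prod_{i=2}^m|f_i(x-t_i)|$. The two innermost integrations produce
\[
I(t_2,\dots,t_m)=\Big(\int\Big(\int G(x)^{r_0}\,|f_1(x-t_1)|^{r_0}\,dx\Big)^{r_1/r_0}\,dt_1\Big)^{1/r_1}.
\]
Writing $\int G(x)^{r_0}|f_1(x-t_1)|^{r_0}\,dx=\big(G^{r_0}*\widetilde{|f_1|^{r_0}}\big)(t_1)$ with $\widetilde h(y)=h(-y)$, this is $I=\|G^{r_0}*\widetilde{|f_1|^{r_0}}\|_{r_1/r_0}^{1/r_0}$. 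I then apply Young's inequality $\|u*v\|_{s}\le\|u\|_\alpha\|v\|_\beta$ with $s=r_1/r_0$ and $\beta=p_1/r_0$, choosing $\alpha$ by $1+\tfrac1s=\tfrac1\alpha+\tfrac1\beta$; since $\|\,\widetilde{|f_1|^{r_0}}\,\|_\beta=\|f_1\|_{L^{p_1}}^{r_0}$ and $\|G^{r_0}\|_\alpha=\|G\|_{L^{r_0'}}^{r_0}$ with $\tfrac1{r_0'}:=\tfrac1{r_0}+\tfrac1{r_1}-\tfrac1{p_1}$, this gives the pointwise bound $I(t_2,\dots,t_m)\le\|f_1\|_{L^{p_1}}\,\|G\|_{L^{r_0'}}$.

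The admissibility of this Young step is exactly where (A1) and (A2) are consumed: (A2) with $k=0$ gives $r_0\le p_1$ (so $\beta\ge1$), which with (A1) gives $r_0\le r_1$ (so $s\ge1$); (A1) also yields $\tfrac1{r_1}-\tfrac1{p_1}\le0$, whence $r_0'\ge r_0\ge1$, and (A2) gives $\tfrac1{r_0'}\ge0$, so $\alpha\in[1,\infty]$. Taking now the outer $L^{(r_2,\dots,r_m)}$-norm in $(t_2,\dots,t_m)$ and using $\|G\|_{L^{r_0'}}=\big\|\,x\mapsto g(x)\prod_{i=2}^m f_i(x-t_i)\,\big\|_{L^{r_0'}}$, monotonicity of the mixed norm yields
\[
\|F\|_{L^{(r_0,\bm r)}}\le\|f_1\|_{L^{p_1}}\,\|\widetilde F\|_{L^{(r_0',r_2,\dots,r_m)}},\qquad \widetilde F(x,t_2,\dots,t_m)=g(x)\prod_{i=2}^m f_i(x-t_i).
\]
This $\widetilde F$ is precisely the $(m-1)$-factor object of the lemma with $r_0$ replaced by $r_0'$, and a short index-shift computation confirms that the reduced data $(r_0',p_2,\dots,p_m)$, $(r_2,\dots,r_m)$, $p_0$ again satisfy (A1)--(A3): the reduced (A1) is the tail of the original one, and on substituting $\tfrac1{r_0'}=\tfrac1{r_0}+\tfrac1{r_1}-\tfrac1{p_1}$ the reduced (A2) at each parameter coincides with the original (A2), while the reduced (A3) telescopes back to the original (A3). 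The induction hypothesis then bounds $\|\widetilde F\|_{L^{(r_0',r_2,\dots,r_m)}}$ by $\|g\|_{L^{p_0}}\prod_{i=2}^m\|f_i\|_{L^{p_i}}$, and multiplying through by $\|f_1\|_{L^{p_1}}$ closes the induction.

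The genuinely delicate part is not the analysis --- each step is one Young inequality --- but the bookkeeping that the three hypotheses are stable under the reduction and that every exponent generated lies in $[1,\infty]$: (A2) controls admissibility of each convolution step, (A1) guarantees the new innermost exponent satisfies $r_0'\ge r_0$, and (A3) is the global balance ensuring both that the successive exponents telescope consistently and that the endpoint is hit with no accumulated loss. I would also treat the $\infty$-endpoints separately, reading the convolution identity and Young's inequality with the usual conventions; note for instance that $r_0=\infty$ forces, through (A1) and (A3), $p_0=\infty$ and $p_i=r_i$ for all $i$, so that the estimate degenerates to a pointwise H\"older bound.
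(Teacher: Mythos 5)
Your proof is correct and takes essentially the same route as the paper: your auxiliary exponent $\tfrac{1}{r_0'}=\tfrac{1}{r_0}+\tfrac{1}{r_1}-\tfrac{1}{p_1}$ is exactly the paper's $b_1$, and your inductive step is one round of the paper's chained application of Young's inequality, with the same admissibility checks drawn from (A1)--(A2) and the same telescoping of (A3) forcing the final exponent to equal $p_0$ (the paper's $b_m=p_0$). Packaging the iteration as an induction on $m$ merely makes explicit the hypothesis-stability bookkeeping that the paper verifies inline.
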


%\begin{remark} \rm
%Let us briefly discuss condition (2) above.  For $k=0$, we have $0\leq \frac 1 {r_0} -\frac 1 {p_1}$. To satisfy condition (2) for $k=1$, we increase the left hand side by $\frac 1{p_1} -\frac 1 {r_1}\geq 0$ and the right hand side by  $\frac 1{p_1} - \frac 1{p_2}$, and we require that the sum on the left reminds bounded above by the sum on the right.  For $k=2$, we  increase  the left hand side by $\frac 1{p_2} -\frac 1 {r_2}\geq 0$ and the right hand side by  $\frac 1{p_2} - \frac 1{p_3}$ while still requiring that the right hand side dominates the left hand side.
%
%Clearly, these conditions depend on the order of the "pieces" that are added on the left and the right.  Namely, if for some $k$,   $\frac 1{p_k} -\frac 1 {r_k}$ is much smaller than  $\frac 1{p_{k}} - \frac 1{p_{k+1}}$, then we would profit much more from this if $k$ is a small index, that is, the respective summands play a role early on in the summation. 
%
%\end{remark}

\begin{proof}
For simplicity, we use capital letters for the reciprocals of $p_k$, $r_k$, that is, $P_k=1/p_k$, $R_k=1/r_k$, $k=0,\ldots, m$.  
 Recalling that summation over the empty set is defined as $0$,  our
 assumptions (A1) -- (A3) are simply 
 \begin{itemize}
\item[(A1)] $  P_k \geq R_k$, $k=1,\ldots,m$; 
\item[(A2)]   $\displaystyle R_0 - P_{k+1} \geq \sum_{\ell =1}^{k} P_\ell -  R_\ell , \quad k=0,\ldots, m-1$;
\item[(A3)] $\displaystyle \sum_{\ell =0}^m   R_\ell=\sum_{\ell =0}^m  P_\ell$.
\end{itemize}
 Define $1/b_1=B_1=R_0+R_1-P_1,$ and for $ k=2,\ldots,m,$ 
\begin{align*}
 1/b_k=B_k&=B_{k-1}+R_k-P_k\\ &=R_0+\sum_{\ell=1}^k R_\ell-P_\ell. 
\end{align*}
 The first application of Young's inequality below requires that 
 $$
 	p_1/r_0,\ r_1/r_0,\ b_1/r_0\geq 1\quad \text{and} \quad 1/(p_1/r_0) + 1/(b_1/r_0)=1+ 1/(r_1/r_0).
	$$ 
This translates to $R_0\geq R_1, B_1, P_1$ and $P_1+B_1= R_0+ R_1$ which is equivalent to 
$$R_0\geq R_1,\ P_1,\ R_0+R_1-P_1.$$ 
But, condition (A1) of the hypothesis implies that $P_1\geq R_1$. Thus we have, 
$R_0\geq R_1,P_1$ and $P_1\geq R_1$, that is, $R_0\geq P_1\geq R_1$.
Similarly, the successive applications of Young's inequality follow by replacing $p_1,r_1,b_1,r_0$ by $p_k,r_k,b_k,b_{k-1}$, respectively.  That is, we require
$$B_{k-1}\geq R_k,\ B_{k-1}+R_k-P_k,\ P_k$$ 
which is equivalent to $B_{k-1}\geq P_k\geq R_k$ which follows from  (A1).

 We shall also use  the standard fact that for $0<\alpha,\beta,\gamma,\delta<\infty$,
 $$
 	\| |f|^\alpha\|_{L^\beta}^\gamma=\| |f|^{\alpha\delta}\|_{L{^{\beta/\delta}}}^{\frac{\gamma}{\delta}},
 $$
 and set $\widetilde f(x)=f(-x)$.
 We compute
\begin{align*}
 &\|F\|_{L^{r_0,\bm r}}^{r_m}\\
 	&=\int_{\Rd}\Big(\int_{\Rd}\ldots \Big(\int_{\Rd}\Big(\int_{\Rd}
		| f_1(x-t_1)\ldots f_m(x-t_m)\, g(x)|^{r_0} 
			dx\Big)^\frac{r_1}{r_0}dt_1\Big)^\frac{r_2}{r_1}\ldots \Big)^{r_m}dt_m\\
 	&=\int_{\Rd}\Big(\int_{\Rd}\ldots \Big(\int_{\Rd}\Big(\int_{\Rd}
		| \widetilde {f_1}(t_1-x)\big(T_{t_2}\widetilde f_2(x)\ldots T_{t_m} \widetilde f_m(x)\, g(x)\big)|^{r_0} 
			dx\Big)^\frac{r_1}{r_0}dt_1\Big)^\frac{r_2}{r_1}\ldots \Big)^{r_m}dt_m\\
	&=\int_{\Rd}\Big(\int_{\Rd}\ldots \Big(\int_{\Rd}\Big(
		| \widetilde {f_1}|^{r_0}\ast |T_{t_2}\widetilde f_2\ldots T_{t_m}\widetilde f_m\, g|^{r_0} (t_1)
			\Big)^\frac{r_1}{r_0}dt_1\Big)^\frac{r_2}{r_1}\ldots \Big)^{r_m}dt_m\\
	&=\int_{\Rd}\Big(\int_{\Rd}\ldots\Big(\int_{\Rd} \Big\|
		| \widetilde {f_1}|^{r_0}\ast |T_{t_2}\widetilde f_2\ldots T_{t_m}\widetilde f_m\, g|^{r_0} 
			\Big\|_{L^{r_1/r_0}}^{\frac{r_1}{r_0}\frac{r_2}{r_1}}dt_2 \Big)^\frac{r_3}{r_2}\ldots \Big)^{r_m}					dt_m\\
	&\leq \int_{\Rd}\Big(\int_{\Rd}\ldots\Big(\int_{\Rd} \|
		 |\widetilde {f_1}|^{r_0}\|_{L^{p_1/r_0}}^{\frac{r_2}{r_0}} \ 
		 	\| |T_{t_2}\widetilde f_2\ldots T_{t_m}\widetilde f_m\, g|^{r_0}
		 	\|_{L^{b_1/r_0}}^{\frac{r_2}{r_0}}dt_2 \Big)^\frac{r_3}{r_2}\ldots \Big)^{r_m}
								dt_m\\
        &= \int_{\Rd}\Big(\int_{\Rd}\ldots\Big(\int_{\Rd} \|
		 \widetilde {f_1}\|_{L^{p_1}}^{r_2} \ \| |T_{t_2}f_2\ldots T_{t_m}f_m\, g|^{b_1}
		 	\|_{L^{1}}^{\frac{r_2}{ b_1}}dt_2 \Big)^\frac{r_3}{r_2}\ldots \Big)^{r_m}	
							dt_m\\
        &=\| {f_1}\|_{L^{p_1}}^{r_m} \int_{\Rd}\Big(\int_{\Rd}\ldots\Big(\int_{\Rd} \Big(\int_{\Rd}
		| f_2(x-t_2)\ldots f_m(x-t_m)\, g(x)|^{b_1} 
			dx\Big)^\frac{r_2}{b_1}dt_2\Big)^\frac{r_3}{r_2}\ldots \Big)^{r_m}	
							dt_m\\
	&\dots \\
	&\leq \| {f_1}\|_{L^{p_1}}^{r_m}\ldots \| {f_{m-1}}\|_{L^{p_{m-1}}}^{r_m} \int_{\Rd}\Big(\int_{\Rd}
		|  f_m(x-t_m)\, g(x)|^{b_{m-1}} 
			dx\Big)^\frac{r_{m}}{b_{m-1}} dt_m \\
	&=  \| {f_1}\|_{L^{p_1}}^{r_m}\ldots \| {f_{m-1}}\|_{L^{p_{m-1}}}^{r_m} \| |\widetilde{f}_m|^{b_{m-1}}
			\ast  |g|^{b_{m-1}}\|_{L^{\frac{r_m}{b_{m-1}}}}^{\frac{r_m}{b_{m-1}}}	\\
	&\leq \| {f_1}\|_{L^{p_1}}^{r_m}\ldots \| {f_{m-1}}\|_{L^{p_{m-1}}}^{r_m} \| f_m\|_{L^{p_m}}^{r_m}
			\|  |g|^{p_0}\|_{L^1}^{\frac{r_m}{p_0}}	\\
	&=\| {f_1}\|_{L^{p_1}}^{r_m}\ldots \| f_m\|_{L^{p_m}}^{r_m}
			\|  g\|_{L^{p_0}}^{r_m},				
\end{align*}
where each inequality  stems from an application of Young's inequality for
convolutions. In the final step, we used $b_m = p_0$ which follows by
combining the definition of $b_m$ with hypothesis (A3).
\end{proof}

\begin{remark} \label{remark:stacking}\rm

Observe that if we would add the condition $p_0\leq r_0$ in hypothesis (A1) of Lemma~\ref{lemma:Young1}, then (A1) and (A3) would combine to imply $p_k=r_k$ for $k=0,\ldots, m$.  Indeed, the strength of Lemma~\ref{lemma:Young1} lies in the fact that  $p_0\leq r_0$ and $p_k=r_k$ for $k=0,\ldots, m$ are not implied by the hypotheses. Setting $\Delta_k = \frac 1{p_k}-\frac 1 {r_k}$ for $k=0,\ldots,m$, (A1) in Lemma~\ref{lemma:Young1} is  $\Delta_1,\ldots, \Delta_m\geq 0$ and condition (A3) becomes $\Delta_0+\sum_{k=1}^m \Delta_k =0$, a condition that allows $\Delta_0$ to be negative, that is  $p_0 > r_0$. In short, all $\Delta_k>0$ contribute to compensate for $\Delta_0=r_0-p_0$ being negative.

Let us now  briefly discuss condition (A2) in Lemma~\ref{lemma:Young1}.  For $k=0$, we have $0\leq \frac 1 {r_0} -\frac 1 {p_1}$. To satisfy condition (A2) for $k=1$, we increase the left hand side by $\Delta_1=\frac 1{p_1} -\frac 1 {r_1}\geq 0$,  add to the right hand side the possibly negative term  $\frac 1{p_1} - \frac 1{p_2}$, and  require that the sum on the left remains bounded above by the sum on the right.  For $k=2$, we  increase  the left hand side by $\Delta_2=\frac 1{p_2} -\frac 1 {r_2}\geq 0$ and add to the right hand side   $\frac 1{p_2} - \frac 1{p_3}$, maintaining that  the right hand side dominates the left hand side. 
This is illustrated in Figure~\ref{fig:rectification} below.

In the case $m=1$,  the conditions $\Delta_1\geq 0$ and $\Delta_0+\Delta_1=0$ from Lemma~\ref{lemma:Young1} are amended  by the requirement $r_0 \leq p_1$, and, for example,  if $r_0=1$, $p_0=2$, then  Lemma~\ref{lemma:Young1} is applicable whenever $ 1\geq \frac 1 {p_1}= \frac 1 {r_1}+\frac 1 2$, that is, if $1\leq p_1= \frac{2r_1}{r_1+2}$.  

If $m=2$, then  $\Delta_1, \Delta_2 \geq 0$ and $\Delta_0+\Delta_1+\Delta_2=0$ from Lemma~\ref{lemma:Young1} are combined with the condition  $r_0 \leq p_1$ and $\Delta_1\leq  \frac 1 {r_0}-\frac 1 {p_2}$.  It is crucial in what follows to observe that these conditions are sensitive to the order of  the $p_k$ and the $r_k$.  For example, the parameters $r_0=1$, $p_0=2$, $r_1= 1= p_1$, $p_2=1$, $r_2=2$ satisfy the hypothesis, while $r_0=1$, $p_0=2$, $r_2= 1= p_2$, $p_1=1$, $r_1=2$ do not.

Indeed, if for some $k$,   $\Delta_k=\frac 1{p_k} -\frac 1 {r_k}$ is much smaller than  $\frac 1{p_{k}} - \frac 1{p_{k+1}}$, then we would profit  more from this if $k$ is a small index, that is, the respective summands play a role early on in the summation. 

Below, we shall use this idea and reorder the indices.  This allows us to first choose $\kappa(1)=k_1\in\{1,\ldots,d\}$ with $\Delta_{\kappa(1)}=\frac 1{p_{\kappa(1)}} -\frac 1 {r_{\kappa(1)}}$ small, and then $\kappa(2)=k_2$ so that $\frac 1{p_{\kappa(1)}} - \frac 1{p_{\kappa(2)}}$ is large.  Clearly, the feasibility of $\kappa(2)$ also depends on the size of  $\Delta_{\kappa(2)}=\frac 1{p_{\kappa(2)}} -\frac 1 {r_{\kappa(2)}}$, so finding an optimal order cannot be achieved with a greedy algorithm.  Moreover, note that the spaces $\mathcal M^{(p_0,\bf p),\kappa; (\bf q,q_{m+1}),\sigma}$ and $\mathcal M^{(p_0,\bf p),{\rm id}; (\bf q,q_{m+1}),{\rm id}}$ are not identical, hence, we cannot choose  $\kappa$ and $\rho$ arbitrarily.

\end{remark}

\begin{figure}[htbp]
  \centering
\begin{tikzpicture}[xscale=-1,yscale=1,rotate=90]

\begin{scope}[draw=blue!40, fill=blue!40]      
\filldraw (1,0) -- (1,1.4) -- (2,1.4) -- (2,0);
\end{scope}

\begin{scope}[draw=red!20, fill=red!20]      
\filldraw  (0,0) -- (1,0) -- (1,2.5) -- (0,2.5);
\end{scope}
\begin{scope}[draw=red!40, fill=red!40]      
\filldraw  (1,1.4) -- (2,1.4) -- (2,3.2) -- (1,3.2);
\end{scope}

\begin{scope}[draw=green!20, fill=green!20]      
\filldraw  (0,4.3) -- (1,4.3) -- (1,2.5) -- (0,2.5);
\end{scope}
\begin{scope}[draw=green!40, fill=green!40]      
\filldraw  (1,4.6) -- (2,4.6) -- (2,3.2) -- (1,3.2);
\end{scope}

\begin{scope}[draw=yellow!20, fill=yellow!20]      
\filldraw  (0,4.3) -- (1,4.3) -- (1,5.3) -- (0,5.3);
\end{scope}
\begin{scope}[draw=yellow!40, fill=yellow!40]      
\filldraw  (1,4.6) -- (2,4.6) -- (2,5.3) -- (1,5.3);
\end{scope}

\draw[->](0,0) -- (0,8.5);
\draw(1,0) -- (1,8.2);
\draw(2,0) -- (2,8.2);
\draw[blue](0,0) -- (1,0);
\draw(1,0) -- (2,0);
\draw(0,1) -- (2,1);
\draw(0,2) -- (2,2);
\draw(0,3) -- (2,3);
\draw(0,4) -- (2,4);
\draw(0,5) -- (2,5);
\draw(0,6) -- (2,6);
\draw(0,7) -- (2,7);
\draw(0,8) -- (2,8);

\node[below] at (0,0) {$0$};
\node[below] at (0,1) {$.5$};
\node[below] at (0,2) {$1$};
\node[below] at (0,3) {$1.5$};
\node[below] at (0,4) {$2$};
\node[below] at (0,5) {$2.5$};
\node[below] at (0,6) {$3$};
\node[below] at (0,7) {$3.5$};

\end{tikzpicture}
  \caption{Depiction of condition (A2) in Lemma~\ref{lemma:Young1}. After adding a pair of colored fields, the top row must always exceed the lower row, with the  lower row finally catching up in the last step, see Remark~\ref{remark:stacking}.}
  \label{fig:rectification}
\end{figure}
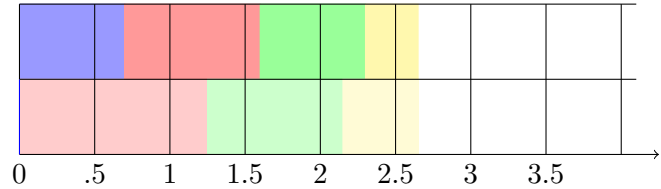

\begin{remark}\label{remark:increasing} \rm
Note that conditions {\rm (A1)} and {\rm (A3)} follow from (but are not equivalent to) the simpler condition
\begin{itemize} 
\item[(A4)] \label{(A4)} $\displaystyle 1\leq r_0\leq p_1 \leq r_1 \leq p_2 \leq \ldots \leq r_{m-1} \leq p_{m}\leq r_m \leq \infty$. 
\end{itemize} Equality {\rm (A3)} can then be satisfied by choosing an appropriate $p_0\geq 1$.

The inequalities in (A1) imply that the LHS of (A2) is positive and, hence, always $r_0\leq p_k\leq r_k$ for all $k$.
Also, (A1) and (A2) necessitate $p_k\leq r_k\leq p_{m+1}$. 

 \end{remark}

Similarly to Lemma~\ref{lemma:Young1}, we show the following.

\begin{lemma}\label{lemma:Young2}  
 Suppose that $1\leq q_k, s_k \leq \infty$ for $k= 1, \ldots, m+1$ and
\begin{itemize}
\item[(B1)] $q_k\geq s_k$, $k=1,\ldots,m$;
\item[(B2)] $\displaystyle  \sum_{\ell =k+1}^{m} \frac 1{q_\ell} - \frac 1{s_\ell} \geq  \frac 1{s_{m+1}} -  \frac 1 {q_{k}} , \quad k=1,\ldots, m$;
\item[(B3)] $\displaystyle\sum_{\ell =1}^m \frac 1 { q_\ell} - \frac 1 {  s_\ell}=  \frac 1 {s_{m+1}}-\frac 1 {q_{m+1}}$;
\end{itemize}
then for 
$G(\bm t,x)= \bm f (\bm t) g(x+S(\bm t))$ we have
\begin{align*}%\label{eqn:Young2}
 \|G\|_{L^{\bm s,s_{m+1}}} \leq  \|\bm f\|_{L^{\bm q}} \ \|g\|_{L^{q_{m+1}}}.
\end{align*}
\end{lemma}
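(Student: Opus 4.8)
The plan is to mirror the proof of Lemma~\ref{lemma:Young1}, peeling off one factor at a time by Young's inequality, but now integrating in the order $t_1,\ldots,t_m,x$ dictated by $L^{\bm s,s_{m+1}}$ and splitting off $f_m,\ldots,f_1$ and finally $g$ from the outside in. As there, I pass to reciprocals $Q_k=1/q_k$, $S_k=1/s_k$ and use the homogeneity identity $\||h|^{\alpha}\|_{L^{\beta}}^{\gamma}=\||h|^{\alpha\delta}\|_{L^{\beta/\delta}}^{\gamma/\delta}$ to move powers in and out of the $L^p$ norms; I also set $\widetilde{h}(u)=h(-u)$, noting reflections leave $L^p$ norms unchanged.

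First I would carry out the innermost integration. Writing $\|G\|_{L^{\bm s,s_{m+1}}}^{s_{m+1}}$ as the nested integral and integrating in $t_1$ produces, for fixed $t_2,\ldots,t_m,x$, the quantity $\int|f_1(t_1)|^{s_1}|g(x+S(\bm t))|^{s_1}\,dt_1=(|f_1|^{s_1}\ast|\widetilde{g}|^{s_1})\big({-}(x+t_2+\cdots+t_m)\big)$, a convolution evaluated at the offset $x+t_2+\cdots+t_m$, times the leftover product $\prod_{i\ge 2}|f_i(t_i)|^{s_1}$. By induction on the integration step I would show that after integrating $t_1,\ldots,t_k$ and raising to the power $s_{k+1}/s_k$ the integrand equals $\prod_{i>k}|f_i(t_i)|^{s_{k+1}}\cdot[H_k(x+t_{k+1}+\cdots+t_m)]^{s_{k+1}/s_k}$, where $H_k=|f_k|^{s_k}\ast[\widetilde{H}_{k-1}]^{s_k/s_{k-1}}$ and $H_1=|f_1|^{s_1}\ast|\widetilde{g}|^{s_1}$; the factor $|f_k(t_k)|^{s_k}$ is consumed in forming the convolution and the remaining factors carry exactly the power needed at the next step. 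The final $x$-integration then gives $\|G\|_{L^{\bm s,s_{m+1}}}^{s_{m+1}}=\|H_m\|_{L^{s_{m+1}/s_m}}^{s_{m+1}/s_m}$.

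Next I would bound $\|H_m\|_{L^{s_{m+1}/s_m}}$ by Young from the outside in. At level $k$ (for $k=m,\ldots,2$) I split $H_k=|f_k|^{s_k}\ast[\widetilde{H}_{k-1}]^{s_k/s_{k-1}}$, choosing the exponent on $|f_k|^{s_k}$ to be $q_k/s_k$, so that this factor contributes exactly $\|f_k\|_{L^{q_k}}^{s_k}$ while $H_{k-1}$ is left in some $L^{c_{k-1}}$; the bottom level $k=1$ splits $H_1$ into $\|f_1\|_{L^{q_1}}^{s_1}\|g\|_{L^{q_{m+1}}}^{s_1}$. Introducing $D_k=1/(s_k c_k)$ with $D_m=S_{m+1}$, the Young balance at each level yields the recursion $D_{k-1}=D_k+S_k-Q_k$, hence $D_k=S_{m+1}-\sum_{\ell=k+1}^m(Q_\ell-S_\ell)$; the requirement that the bottom level closes with $g$ in $L^{q_{m+1}}$ forces $D_1=Q_1+Q_{m+1}-S_1$, which upon comparison with the formula for $D_1$ is precisely hypothesis (B3).

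The main obstacle, exactly as in Lemma~\ref{lemma:Young1}, is to verify that every exponent occurring in these successive Young inequalities lies in $[1,\infty]$. The factor conditions $q_k/s_k\ge 1$ are (B1). The admissibility of the space carrying $H_{k-1}$, namely $s_k D_{k-1}\le 1$, unwinds through the formula for $D_k$ to $S_{m+1}-Q_k\le\sum_{\ell=k+1}^m(Q_\ell-S_\ell)$, which is exactly (B2); the complementary bounds $0\le D_k$ and $D_k\le S_k$ (so that $c_k\ge 1$), as well as the boundary requirement $q_{m+1}\ge s_1$ at the last step, all follow by combining (B1) with (B2), just as (A1) and (A2) combined to give $B_{k-1}\ge P_k\ge R_k$ before. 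Once these memberships are secured, the chain of Young inequalities multiplies out to $\big(\|f_1\|_{L^{q_1}}\cdots\|f_m\|_{L^{q_m}}\,\|g\|_{L^{q_{m+1}}}\big)^{s_{m+1}}$, which is the claim.
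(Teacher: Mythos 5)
Your proposal is correct and takes essentially the same route as the paper's proof: you rewrite $\|G\|_{L^{\bm s,s_{m+1}}}^{s_{m+1}}$ as a nested convolution, peel off $f_m,\ldots,f_2$ and finally $f_1,g$ by successive Young inequalities, and track auxiliary exponents (your $D_k$ is the paper's $B_{k+1}$) whose recursion and admissibility constraints unwind to precisely (B1)--(B3), exactly as in the paper's derivation of (B1')--(B3'). The only slips are immaterial bookkeeping: the recursion should carry a reflection on $f_k$ as well, $H_k=|\widetilde f_k|^{s_k}\ast[\widetilde H_{k-1}]^{s_k/s_{k-1}}$ (harmless, since reflections preserve $L^p$ norms, as you note), and the boundary condition $q_{m+1}\geq s_1$ uses the balance identity encoding (B3) together with (B2) at $k=1$, not (B1)--(B2) alone.
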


\begin{proof}
As before, our computations involve the introduction of an auxiliary parameter $b_k$.  We start with a formal computation, namely,
\begin{align*}
 &\|G\|_{L^{ \bm r, s_{m+1}}}^{s_{m+1}}\\
 	&=\int_{\Rd}\Big(\int_{\Rd}\ldots %\Big(\int_{\Rd}
	\Big(\int_{\Rd}
		| f_1(t_1)\ldots f_m(t_m)\, g(x+t_1+\ldots+t_m)|^{s_1} 
			dt_1\Big)^\frac{s_2}{s_1} \ldots %\Big)^\frac{s_m}{s_{m-1}}
			dt_m\Big)^\frac{s_{m+1}}{s_m}dx\\
 	&=\int_{\Rd}\Big(\int_{\Rd}|\widetilde f_m(t_m)|^{s_m}\Big(\ldots \int_{\Rd}|\widetilde f_2(t_2)|^{s_2}\Big(\int_{\Rd}
		|  \widetilde f_1(t_1)  g(x-t_1{-} t_2{-} \ldots {-}t_m)|^{s_1} 
			dt_1\Big)^\frac{s_2}{s_1}dt_2\Big)^\frac{s_3}{s_2}\ldots \Big)^{\frac{s_{m+1}}{s_m}}dx\\
% 	&=\int_{\Rd}\Big(\int_{\Rd}|\widetilde f_m(t_m)|^{s_m}\Big( \ldots \int_{\Rd}\Big(|\widetilde f_2(t_2)|^{s_1}\int_{\Rd}
%		|  \widetilde f_1(t_1)  g(x-t_1{-} t_2{-} \ldots {-}t_m)|^{s_1} 
%			dt_1\Big)^\frac{s_2}{s_1}dt_2\Big)^\frac{s_3}{s_2}\ldots \Big)^{\frac{s_{m+1}}{s_m}}dx\\
	&=\int_{\Rd}\Big(\int_{\Rd}|\widetilde f_m(t_m)|^{s_m}\Big( \ldots \int_{\Rd}|\widetilde f_2(t_{2})|^{s_2}	 	
		\big(|  \widetilde f_1|^{s_1} \ast |g|^{s_1}(x - t_{2} {-}t_3{-} \ldots {-}t_m) 
			\big)^\frac{s_2}{s_1}dt_{2}\Big)^\frac{s_3}{s_2}dt_3\Big)^\frac{s_4}{s_3}\ldots \Big)^{\frac{s_{m+1}}{s_m}}dx\\
	&\ldots \\
	&= \int_{\Rd}\Big( |\widetilde f_m|^{s_m}\ast \Big( |\widetilde f_{m-1}|^{s_{m-1}}\ast \Big(\ldots \Big( |\widetilde f_2|^{s_2}	\ast 	
		\big(|  \widetilde f_1|^{s_1} \ast | g(x)|^{s_1} 
			\big)^\frac{s_2}{s_1}\Big)^\frac{s_3}{s_2}\ldots \Big)^\frac{s_m}{s_{m-1}}\Big)^{\frac{s_{m+1}}{s_m}}dx\\
	&= \Big\| |\widetilde f_m|^{s_m}\ast \Big( |\widetilde f_{m-1}|^{s_{m-1}}\ast \Big(\ldots \Big( |\widetilde f_2|^{s_2}	\ast 	
		\big(|  \widetilde f_1|^{s_1} \ast | g|^{s_1} 
			\big)^\frac{s_2}{s_1}\Big)^\frac{s_3}{s_2}\ldots \Big)^\frac{s_m}{s_{m-1}}\Big\|_{L^{\frac{s_{m+1}}{s_m}} }^{\frac{s_{m+1}}{s_m}}  \\
&\leq \| | \widetilde f_m|^{s_m}\|_{L^{q_m/s_m}}^{\frac{s_{m+1}}{s_m}}  \ 
		\Big\| \Big(|\widetilde f_{m-1}|^{s_{m-1}} 
			\ast \Big(\ldots \Big( |\widetilde f_2|^{s_2}	
				\ast 	\big(|  \widetilde f_1|^{s_1} 
					\ast | g|^{s_1} \big)^\frac{s_2}{s_1}\Big)^\frac{s_3}{s_2}\ldots 
						\Big)^{\frac{s_m}{s_{m-1}}} \|_{L^{b_m/s_m} }^{\frac{s_{m+1}}{s_m}}\\
&= \|   f_m\|_{L^{q_m}}^{s_{m+1}} \ 
		\Big\| |\widetilde f_{m-1}|^{s_{m-1}} 
			\ast \Big(\ldots \Big( |\widetilde f_2|^{s_2}	
				\ast 	\big(|  \widetilde f_1|^{s_1} 
					\ast | g|^{s_1} \big)^\frac{s_2}{s_1}\Big)^\frac{s_3}{s_2}\ldots 
						\Big)^{\frac{s_{m-1}}{s_{m-2}}}  \|_{L^{b_m/s_{m-1}} }^{\frac{s_{m+1}}{s_{m-1}}} \\
& \ldots \\
&= \|   f_m\|_{L^{q_m}}^{s_{m+1}} \ldots \|   f_2 \|_{L^{q_2}}^{s_{m+1}}
		\Big\| |  \widetilde f_1|^{s_1} 
					\ast | g|^{s_1}   \|_{L^{b_2/s_1} }^{\frac{s_{m+1}}{s_{1}}} \\
&\leq \|   f_m\|_{L^{q_m}}^{s_{m+1}} \ldots \|   f_2 \|_{L^{q_2}}^{s_{m+1}}
		\| |  \widetilde f_1|^{s_1} \|_{L^{q_1/s_1} }^{\frac{s_{m+1}}{s_{1}}}
					\| | g|^{s_1}   \|_{L^{q_{m+1}/s_1} }^{\frac{s_{m+1}}{s_{1}}} \\
&= \|   f_m\|_{L^{q_m}}^{s_{m+1}}\ldots  \|   f_1\|_{L^{q_1}}^{s_{m+1}}  \|  g\|_{L^{q_{m+1}}}^{s_{m+1}} .
			\end{align*}
			
To justify the first application of Young's inequality, we require 
$$\frac{1}{\tfrac{q_m}{s_m}}+ \frac{1}{\tfrac{b_m}{s_m}} =1+\frac{1}{\tfrac{s_{m+1}}{s_m}},\quad  \frac{q_m}{s_m}, \,\frac{b_m}{s_m},\, \frac{s_{m+1}}{s_m}\geq 1.$$
Using reciprocals, this is equivalent to 
$$ Q_m + B_m= S_m +S_{m+1} ,\quad  S_m \geq Q_m,\, B_m,\, S_{m+1},$$
that is,
$$  B_m= S_m -Q_m+S_{m+1} ,\quad  S_m \geq Q_m,\, B_m,\, S_{m+1}.$$
The subsequent application of Young's inequality requires
$$ \frac{1}{\frac{q_{m-1}}{s_{m-1}}} + \frac{1}{\frac{b_{m-1}}{s_{m-1}}} =1+\frac{1}{\frac{b_{m}}{s_{m-1}}},\quad   \frac{q_{m-1}}{s_{m-1}},\,\frac{b_{m-1}}{s_{m-1}}, \, \frac{b_m}{s_{m-1}}   \geq 1.$$
Using reciprocals, this is equivalent to 
$$  B_{m-1}= S_{m-1}- Q_{m-1}+B_m =S_{m+1}+ \sum_{\ell=m-1}^m S_\ell-Q_\ell,\quad  S_{m-1} \geq Q_{m-1},\, B_{m-1},\, B_{m}.$$

In general, for $k=1,\ldots,m-2$, we require
$$B_{m-k}=S_{m-k}-Q_{m-k}+B_{m-k+1}=S_{m+1}+ \sum_{\ell=m-k}^m S_\ell-Q_\ell,\quad S_{m-k}\geq Q_{m-k},B_{m-k},B_{m-k+1},$$
and finally, for the last application of Young's inequality, we require
$$ Q_{m+1}=S_{1}-Q_1+B_{2} = S_{m+1}+ \sum_{\ell=m-k}^m S_\ell-Q_\ell,\quad S_{1}\geq Q_{1},Q_{m+1},B_{2}.$$ 

Now, $S_k\geq Q_k$ for $k=1,\ldots,m$ implies 
$$0\leq S_{m+1}\leq B_m \leq B_{m-1} \leq \ldots \leq B_3\leq B_2\leq Q_{m+1},$$ hence, it suffices to postulate aside of $S_k\geq Q_k$ for $k=1,\ldots,m$ the conditions $S_k\geq B_k$ for $k=2,\ldots, m$ and  $S_1\geq Q_{m+1},B_2$. For $k=2,\dots,m$, we use that  $\sum_{\ell=1}^{m+1} S_\ell-Q_\ell =0$ implies $\sum_{\ell=k}^m S_\ell-Q_\ell = -S_{m+1}+Q_{m+1}-  \sum_{\ell=1}^{k-1} S_\ell-Q_\ell $ in order to rewrite  $S_k\geq B_k$ in form of  
\begin{align*}
 S_k &\geq B_k  =S_{m+1}+ \sum_{\ell=k}^m S_\ell-Q_\ell =   Q_{m+1} - \sum_{\ell=1}^{k-1} S_\ell-Q_\ell 
\end{align*}
which is
\begin{align*}
 Q_{m+1} - S_k \leq \sum_{\ell=1}^{k-1} S_\ell-Q_\ell.   
\end{align*}
For $k=1$, the above covers the condition $Q_{m+1}\leq S_1$.

In summary, for $k= 1, \ldots, m+1$ we obtained the sufficient conditions
\begin{itemize}
\item[(B1')] $q_k\geq s_k$, $k=1,\ldots,m$
\item[(B2')] $\displaystyle  \sum_{\ell =1}^{k} \frac 1{s_\ell} - \frac 1{q_\ell} \geq  \frac 1{q_{m+1}} - \frac 1 {s_{k+1}} , \quad k=0,\ldots, m-1$;
\item[(B3')] $\displaystyle\sum_{\ell =1}^m \frac 1 { s_\ell} - \frac 1 {  q_\ell}=  \frac 1 {q_{m+1}}-\frac 1 {s_{m+1}}$.
\end{itemize}
Forming the difference of (B3') and (B2') gives
\begin{itemize}
\item[(B2'')] $\displaystyle  \sum_{\ell =k+1}^{m} \frac 1{s_\ell} - \frac 1{q_\ell} \leq  \frac 1 {s_{k+1}} - \frac 1{s_{m+1}} , \quad k=0,\ldots, m-1$.
\end{itemize}
Reindexing   leads to
\begin{itemize}
\item[(B2'')] $\displaystyle  \sum_{\ell =k}^{m} \frac 1{s_\ell} - \frac 1{q_\ell} \leq  \frac 1 {s_{k}} - \frac 1{s_{m+1}} , \quad k=1,\ldots, m$,
\end{itemize}
and adding $\frac 1 {q_k}-\frac 1 {s_{k}}$ to both sides, and then multiplying both sides by -1 gives
\begin{itemize}
\item[(B2)] $\displaystyle  \sum_{\ell =k+1}^{m} \frac 1{q_\ell} - \frac 1{s_\ell} \geq  \frac 1{s_{m+1}} -  \frac 1 {q_{k}} , \quad k=1,\ldots, m$.
\end{itemize}

\end{proof}
\begin{remark}\rm
The conditions {\rm (B1)--(B3)} are similar to those in  {\rm (A1)--(A3)}. Indeed, a change of variable $k\to m+1-k$, that is, renaming $q_k= \widetilde{q}_{m+1-k}$ and $s_k= \widetilde{s}_{m+1-k}$, $k=1,\ldots,m+1$, turns {\rm (B2)} into
$$\displaystyle  \sum_{\ell =k+1}^{m} \frac 1{\widetilde{q}_{m+1-\ell}} - \frac 1{\widetilde{s}_{m+1-\ell}} \geq  \frac 1{\widetilde{s}_{m+1-(m+1)}} -  \frac 1 {\widetilde{q}_{m+1-k}}= \frac 1{\widetilde{s}_{0}} -  \frac 1 {\widetilde{q}_{m+1-k}} , \quad k=1,\ldots, m.$$ 
%With $\Delta'_{m+1-\ell}=\frac 1{\widetilde{q}_{m+1-\ell}} - \frac 1{\widetilde{s}_{m+1-\ell}}$, 
We have  $$\displaystyle  \sum_{\ell =k+1}^{m} \frac 1{\widetilde{q}_{m+1-\ell}} - \frac 1{\widetilde{s}_{m+1-\ell}} =  \sum_{\ell'=1}^{m-k} \frac 1{\widetilde{q}_{\ell'}} - \frac 1{\widetilde{s}_{\ell'}},$$
hence, we obtain for $k'=m-k$ the conditions
$$\displaystyle  \sum_{\ell' =1}^{k'} \frac 1{\widetilde{q}_{\ell'}} - \frac 1{\widetilde{s}_{\ell'}} \geq  \frac 1{\widetilde{s}_{0}} -  \frac 1 {\widetilde{q}_{k'+1}} , \quad k'=0,\ldots, m-1. $$
We conclude that  difference between the conditions in Lemma~\ref{lemma:Young1} and in Lemma~\ref{lemma:Young2} lies --- aside of naming the decay parameters --- simply in replacing  $\leq$ in  {\rm (A1)} and {\rm (A2)} by $\geq$ in  {\rm (B1)} and {\rm (B2)}. Hence, it comes to no surprise that   {\rm (B1)} and {\rm (B2)} follow from, but are not equivalent to
\begin{itemize} 
\item[(B4)] $\displaystyle 1\leq s_1 \leq q_1 \leq s_2 \leq \ldots \leq q_{m-1} \leq s_{m}\leq q_m \leq s_{m+1}\leq \infty$. 
\end{itemize}

Moreover, (B1) implies $\displaystyle  \sum_{\ell =k+1}^{m} \frac 1{q_\ell} - \frac 1{s_\ell} \leq  0$, and, hence, $ q_{m+1} \geq q_{k}$ for $ k=1,\ldots, m$.
\end{remark}

\subsection{Young type results with permutations}\label{sec:young permute}

As observed in Remark~\ref{remark:stacking}, condition (A2) in Lemma~\ref{lemma:Young1} and, similarly, (B2) in Lemma~\ref{lemma:Young2}
are sensitive to the order of the $p_k$, $r_k$, $q_k$, and $s_k$. 

To
obtain a bound for operators as desired, we may have to reorder the
parameters.  This motivates the introduction of permutations $\kappa$ and
$\rho$. In addition to the flexibility obtained at cost of notational complexity, we observe that the permutation of the integration order will allow us to pull out integration with respect to some variables.  In fact, setting $t_0=x$ and choosing $j=\kappa^{-1}(0)$, we arrive at
\begin{align*}
 &\|F\|_{L^{(r_0,\bm r);\kappa}}^{r_{\k(m)}}\\
 	&=\int_{\Rd}\Big(\int_{\Rd}\ldots \Big(\int_{\Rd}\Big(\int_{\Rd}
		| f_1(t_0-t_1)\ldots f_m(t_0-t_m)\, g(t_0)|^{r_{\k(0)}} 
			d{t_{\k(0)}}\Big)^\frac{r_{\k(1)}}{r_{\k(0)}}dt_{\k(1)}\Big)^\frac{r_{\k(2)}}{r_{\k(1)}}\ldots \Big)^{r_{\k(m)}}dt_{\k(m)}\\
&= \int_{\Rd}\Big(\int_{\Rd}\ldots \Big(\int_{\Rd}\Big(\int_{\Rd} \prod_{\ell=0}^{m}|f_{\kappa(\ell)}(t_{0}-t_{\kappa(\ell)}) g(t_0)|^{r_{\k(0)}} 
			d{t_{\k(0)}}\Big)^\frac{r_{\k(1)}}{r_{\k(0)}}dt_{\k(1)}\Big)^\frac{r_{\k(2)}}{r_{\k(1)}}\ldots \Big)^{r_{\k(m)}}dt_{\k(m)}\\
&=\|f_{\k(0)}\|^{r_{\k(m)}}_{L^{p_{\k(0)}}}\|f_{\k(1)}\|^{r_{\k(m)}}_{L^{p_{\k(1)}}}\ldots\|f_{\k(j-1)}\|^{r_{\k(m)}}_{L^{p_{\k(j-1)}}}\times\\
            &\int_{}\Big(\int_{}\ldots \Big(\int_{}\Big(\int_{}
		| f_{\k(j+1)}(x-t_{\k(j+1)})\ldots f_{\k(m)}(x-t_{\k(m)})\, g(x)|^{r_{\k(j)}} 
			d{x}\Big)^\frac{r_{\k(j+1)}}{r_{\k(j)}} dt_{\k(j+1)}\Big)^\frac{r_{\k(j+2)}}{r_{\k(j+1)}}\ldots \Big)^{r_{\k(m)}}dt_{\k(m)}.
           %&\quad\quad\quad\quad dt_{\k(j+1)}\Big)^\frac{r_{\k(j+2)}}{r_{\k(j+1)}}\ldots \Big)^{r_{\k(m)}}dt_{\k(m)}.
 \end{align*} We can then apply Lemma~\ref{lemma:Young1} to the iterated integral on the right hand side.

This observation leads us  to the following result.

\begin{lemma}\label{lemma:Young1b}  
 Let $\kappa$ be a permutation on $\{0,1,\ldots,m\}$, $z=\kappa^{-1}(0)$, and let $1\leq p_k, r_k \leq 
  \infty$,  $k=0, 1, \ldots, m$, satisfy
\begin{itemize}%[1.]
\item[(A0)] $p_{\k(\ell)}=r_{\k(\ell)}$, $\ell=0,\ldots,z-1$;
\item[(A1)] $  p_{\k(\ell)}\leq r_{\k(\ell)}$, $\ell=z,\ldots,m$; 
\item[(A2)]   $\displaystyle \sum_{\ell =z+1}^{k}  \frac 1{p_{\k(\ell)}} - \frac 1{ r_{\k(\ell)}} \leq \frac 1 {r_{0}} - \frac 1{p_{\k(k+1)}} , \quad k=z,\ldots, m-1$;
\item[(A3)] $\displaystyle \sum_{\ell =z+1}^{m} \frac 1 { p_{\kappa(\ell)}} - \frac 1 {  r_{\kappa(\ell)}} = \frac 1 {r_{{0}}}-\frac 1 {p_{0}}  $.
\end{itemize}
 Then for
   $F(x,\bm t)=\bm f (x- \bm t) g(x)$ it holds
 \begin{align*}%\label{eqn:Young1}
 \|F\|_{L^{(r_0,\bm r)_\kappa}}\leq   \|g\|_{L^{p_0}} \ \|\bm f\|_{L^{\bm p}}.
 \end{align*}
% for all $g\in L^{p_0}(\Rd)$ and $f_{\kappa(\ell)}\in L^{p_{\kappa(\ell)}}(\Rd),$ $0\leq \ell\leq m$ with $\ell\not =j.$

%
%  Note that conditions $(1),\  (2)$ and $ (3)$ are implied by the simpler condition  
%\begin{itemize} 
%\item[(4)]  $\displaystyle 1\leq r_0\leq p_1 \leq r_1 \leq p_2 \leq \ldots \leq r_{m-1} \leq p_{m}\leq r_m \leq \infty$. 
%\end{itemize}
%Moreover, (1) implies that the RHS of (2) is positive and, hence, always $r_0\leq p_k\leq r_k$ for all $k$.
%
% 

\end{lemma}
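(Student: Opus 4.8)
The plan is to reduce the permuted statement to the unpermuted Lemma~\ref{lemma:Young1} by carrying out exactly the factorization sketched in the display preceding the lemma. Write $z=\kappa^{-1}(0)$, so that $\kappa(z)=0$ and $r_{\kappa(z)}=r_0$, and recall that in the norm $\|F\|_{L^{(r_0,\bm r)_\kappa}}$ the variables are integrated in the order $t_{\kappa(0)},t_{\kappa(1)},\ldots,t_{\kappa(m)}$, with $t_0=x$ occupying the $z$-th slot. Since $F(x,\bm t)=f_1(x-t_1)\cdots f_m(x-t_m)\,g(x)$, for every index $\ell<z$ the variable $t_{\kappa(\ell)}$ (with $\kappa(\ell)\neq 0$) occurs in the single factor $f_{\kappa(\ell)}(x-t_{\kappa(\ell)})$ only. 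The first step is therefore to integrate out $t_{\kappa(0)},\ldots,t_{\kappa(z-1)}$ one after another.

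For the factorization I would argue by induction on the innermost $z$ integrations, tracking the exponent to which the surviving factors are raised. Integrating $t_{\kappa(0)}$ in the integrand $|F|^{r_{\kappa(0)}}$ uses translation invariance to produce the constant $\int|f_{\kappa(0)}(s)|^{r_{\kappa(0)}}\,ds=\|f_{\kappa(0)}\|_{L^{r_{\kappa(0)}}}^{r_{\kappa(0)}}$, which is independent of all remaining variables and hence pulls out of every subsequent integral. The key bookkeeping point is that after raising to the power $r_{\kappa(\ell)}/r_{\kappa(\ell-1)}$ at each stage, the live exponent on the not-yet-integrated factors equals $r_{\kappa(\ell)}$, so that integrating $t_{\kappa(\ell)}$ meets the factor $f_{\kappa(\ell)}$ precisely at exponent $r_{\kappa(\ell)}$ and yields the constant $\|f_{\kappa(\ell)}\|_{L^{r_{\kappa(\ell)}}}^{r_{\kappa(\ell)}}$. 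Invoking hypothesis (A0), $r_{\kappa(\ell)}=p_{\kappa(\ell)}$ for $\ell=0,\ldots,z-1$, these constants are exactly $\|f_{\kappa(\ell)}\|_{L^{p_{\kappa(\ell)}}}$, and after carrying them through to the outermost exponent $r_{\kappa(m)}$ one is left with
\[
\|F\|^{r_{\kappa(m)}}_{L^{(r_0,\bm r)_\kappa}}=\Big(\prod_{\ell=0}^{z-1}\|f_{\kappa(\ell)}\|_{L^{p_{\kappa(\ell)}}}^{r_{\kappa(m)}}\Big)\,\|F'\|^{r_{\kappa(m)}}_{L^{(r_0,\bm r')}},
\]
where $F'(x,\bm t')=f_{\kappa(z+1)}(x-t_{\kappa(z+1)})\cdots f_{\kappa(m)}(x-t_{\kappa(m)})\,g(x)$ and the remaining norm is the \emph{unpermuted} mixed norm, innermost in $x$ at exponent $r_0$ and then in $t_{\kappa(z+1)},\ldots,t_{\kappa(m)}$.

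The second step is to apply Lemma~\ref{lemma:Young1} to $F'$, viewed as the $m'=m-z$ variate problem with functions $f'_j=f_{\kappa(z+j)}$, parameters $r'_j=r_{\kappa(z+j)}$, $p'_j=p_{\kappa(z+j)}$ for $j=1,\ldots,m'$, and $r'_0=r_0$, $p'_0=p_0$. A direct reindexing $\ell=z+j$ shows that the sub-lemma hypotheses (A1)--(A3) are literally conditions (A1), (A2), (A3) of Lemma~\ref{lemma:Young1b} restricted to the range $\ell\geq z+1$; in particular $p'_0\le r'_0$ is never invoked, so the condition at $\ell=z$ (namely $p_0\le r_0$) plays no role in this argument. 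Lemma~\ref{lemma:Young1} then gives $\|F'\|_{L^{(r_0,\bm r')}}\le \|g\|_{L^{p_0}}\prod_{\ell=z+1}^{m}\|f_{\kappa(\ell)}\|_{L^{p_{\kappa(\ell)}}}$, and multiplying back the pulled-out factors yields $\|F\|_{L^{(r_0,\bm r)_\kappa}}\le \|g\|_{L^{p_0}}\,\|\bm f\|_{L^{\bm p}}$, as claimed.

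I expect the only genuine obstacle to be the bookkeeping of the nested exponents in the first step, i.e.\ confirming that every pre-$x$ factor is integrated at exactly its own exponent and that the resulting constant survives unchanged (up to reweighting of its exponent) through all later integrations. Once this is verified the degenerate cases $z=0$ (no factoring, direct appeal to Lemma~\ref{lemma:Young1}) and $z=m$ (where the residual problem collapses to $\|g\|_{L^{r_0}}$ with $r_0=p_0$ forced by (A3)) are immediate, and everything else is a mechanical translation of hypotheses.
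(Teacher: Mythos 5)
Your proof is correct and is essentially the paper's own argument: the displayed computation immediately preceding the lemma performs exactly your factorization of the pre-$x$ variables (each integrating out to $\|f_{\kappa(\ell)}\|_{L^{p_{\kappa(\ell)}}}$ via (A0) and translation invariance), after which Lemma~\ref{lemma:Young1} is applied to the residual unpermuted iterated integral --- the paper offers this observation in lieu of a formal proof, and your exponent bookkeeping and reindexing of (A1)--(A3) fill in the same details. Your side remark that the $\ell=z$ instance of (A1), i.e.\ $p_0\leq r_0$, is never invoked is also apt: assuming it together with (A3) would force $p_{\kappa(\ell)}=r_{\kappa(\ell)}$ for all $\ell\geq z+1$ and $p_0=r_0$, trivializing the statement, as Remark~\ref{remark:stacking} notes for the unpermuted case.
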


\begin{remark} \label{remark:Minkowski}
 \rm Loosely speaking, the decay of a function  $F(x,t_1,\ldots,t_d)$ in the variables $(x,t_1,\ldots,t_d)$ is given by the parameters $(p_0,p_1,\ldots,p_d)$, that is, $L^{p_0}$-decay in $x$,   $L^{p_1}$-decay in $t_1$, $\ldots$, $L^{p_d}$-decay in $t_d$. As we then use the flexibility of order of integration, it is worth noting that Minkowski's inequality for integrals implies that integrating with respect to variables with large exponents last,  increases the size of the space.
 
 For example, if $q\geq p$, we have 
\begin{align*}
  \|F\|_{L^{(p,q);(0,1)}}&= \Big( \int \Big(\int  |F(x,t_1)|^p dx\Big)^{q/p}dt_1\Big)^{1/q}\leq \Big( \int \Big(\int  |F(x,t_1)|^q dx\Big)^{p/q}dt_1\Big)^{1/p}= \|F\|_{L^{(p,q);(1,0)}},
\end{align*}
which implies $L^{(p,q);(0,1)}\subseteq L^{(p,q);(1,0)}$ if $q \geq p$, for example,  $L^{(1,\infty);(0,1)}\subseteq L^{(1,\infty);(1,0)}$. This inclusion is strict in general, for example, choose $F(x,t_1)=g(x-t_1)\in L^{(1,\infty);(1,0)} \setminus L^{(1,\infty);(0,1)}$ for any function $g\in L^1$. 
\end{remark}

Similarly to Lemma~\ref{lemma:Young1b}, we formulate the following.

%\emph{Below, Shahla suggests to let the domain of $\rho$ be 0 to m, rather than 1 to m+1. Let me think about it, on first sight, I am against it.  But this is just notational stuff}

\begin{lemma}\label{lemma:Young2b}  
Let $\rho$ be a permutation on $\{1,\ldots,m+1\}$, $w=\rho^{-1}(m+1)$, and
  $1\leq q_k,s_k \leq
  \infty$ be $k= 1, \ldots, m+1$ satisfy
\begin{itemize}
\item[(B0)] $q_{\k(\ell)}=s_{\k(\ell)}$, $\ell=w,\ldots,m$;
\item[(B1)] $q_{\rho(k)}\geq s_{\rho(k)}$, $k=1,\ldots,w-1$;
\item[(B2)] $\displaystyle  \sum_{\ell =k+1}^{w-1} \frac 1{q_{\rho(\ell)}} - \frac 1{s_{\rho(\ell)}} \geq  \frac 1{s_{m+1}} -  \frac 1 {q_{\rho(k)}} , \quad k=1,\ldots, w-1$

\item[(B3)] $\displaystyle \sum_{\ell =1}^{w-1}  \frac 1 { q_{\rho(\ell)}} - \frac 1 {  s_{\rho(\ell)}} =\frac 1 {s_{m+1}}-\frac 1 {q_{m+1}}$.
\end{itemize}
 Then  $G(\bm \xi,\nu)= \bm f (\bm \xi) g(\nu+S(\bm \xi))$ satisfies
\begin{align*}%\label{eqn:Young2}
 \|G\|_{L^{(\bm s,s_{m+1}),\rho}} \leq  \|\bm f\|_{L^{{\bm q}}} \ \|g\|_{L^{q_{m+1}}} \, .
\end{align*}
\end{lemma}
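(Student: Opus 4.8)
\emph{Proof plan.} The plan is to adapt the reduction sketched before Lemma~\ref{lemma:Young1b} to the present convolution setting, using that $\nu$ (the index-$(m+1)$ variable) enters $G$ only through the additive shift $\nu+S(\bm\xi)$ inside $g$. Write $w=\rho^{-1}(m+1)$ for the position at which $\nu$ is integrated, and split the $m+1$ nested integrals defining $\|G\|_{L^{(\bm s,s_{m+1}),\rho}}$ into the inner block (positions $1,\ldots,w-1$, carrying the variables $\xi_{\rho(1)},\ldots,\xi_{\rho(w-1)}$), the $\nu$-integration at position $w$ (exponent $s_{\rho(w)}=s_{m+1}$), and the outer block (positions $w+1,\ldots,m+1$, carrying $\xi_{\rho(w+1)},\ldots,\xi_{\rho(m+1)}$).

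Since $\bm f(\bm\xi)=\prod_i f_i(\xi_i)$ is a pure tensor, each outer factor $|f_{\rho(\ell)}(\xi_{\rho(\ell)})|$ with $\ell>w$ is constant throughout the inner block and the $\nu$-integration, hence factors out of them; the elementary identity $\| |h|^\alpha\|_{L^\beta}^\gamma=\| |h|^{\alpha\delta}\|_{L^{\beta/\delta}}^{\gamma/\delta}$ shows that, after passing through the nested norms, it is ultimately integrated at its own position $\ell$ against the exponent $s_{\rho(\ell)}$. Thus the outer block contributes exactly $\prod_{\ell=w+1}^{m+1}\|f_{\rho(\ell)}\|_{L^{s_{\rho(\ell)}}}$, which by hypothesis (B0) equals $\prod_{\ell=w+1}^{m+1}\|f_{\rho(\ell)}\|_{L^{q_{\rho(\ell)}}}$.

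The crux is the inner block together with the $\nu$-integration. For fixed outer variables put $\sigma=\sum_{\ell>w}\xi_{\rho(\ell)}$, $\bm\eta=(\xi_{\rho(1)},\ldots,\xi_{\rho(w-1)})$, and $g_\sigma:=g(\cdot+\sigma)$, so that $g(\nu+S(\bm\xi))=g_\sigma(\nu+S(\bm\eta))$. Once the outer factors are pulled out, the inner block is precisely the mixed norm of $G'(\bm\eta,\nu)=\bm f'(\bm\eta)\,g_\sigma(\nu+S(\bm\eta))$ with $\bm f'=(f_{\rho(1)},\ldots,f_{\rho(w-1)})$ and $\nu$ integrated last, that is, exactly the quantity estimated in Lemma~\ref{lemma:Young2} with $m$ replaced by $w-1$, with exponents $s_{\rho(1)},\ldots,s_{\rho(w-1)}$ (resp.\ $q_{\rho(1)},\ldots,q_{\rho(w-1)}$) attached to the $\bm\eta$-variables and $s_{m+1}$ (resp.\ $q_{m+1}$) attached to $\nu$. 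Under the relabeling $k\mapsto\rho(k)$ of the inner indices, hypotheses (B1), (B2), (B3) of the present lemma become verbatim the conditions (B1), (B2), (B3) of Lemma~\ref{lemma:Young2}, so that lemma yields $\|G'\|\le \prod_{\ell=1}^{w-1}\|f_{\rho(\ell)}\|_{L^{q_{\rho(\ell)}}}\,\|g_\sigma\|_{L^{q_{m+1}}}$. As $\|g_\sigma\|_{L^{q_{m+1}}}=\|g\|_{L^{q_{m+1}}}$ by translation invariance of the $L^{q_{m+1}}$-norm, this bound is independent of the outer variables; feeding it into the outer block, combining with the outer contribution, and re-collecting the factors over all $i=1,\ldots,m$ gives $\|G\|_{L^{(\bm s,s_{m+1}),\rho}}\le \|\bm f\|_{L^{\bm q}}\,\|g\|_{L^{q_{m+1}}}$.

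I expect the main obstacle to be making this pull-out rigorous: one must verify that the inner block really is a translate-of-$g$ instance of the Lemma~\ref{lemma:Young2} quantity --- which hinges on the additive shift $\sigma$ being absorbed both by the translation invariance of the $\nu$-integral and by that of the $L^{q_{m+1}}$-norm --- and one must carefully track the power to which each pulled-out outer factor $|f_{\rho(\ell)}|$ is raised as it traverses the successive $L^{s_{\rho(k)}/s_{\rho(k-1)}}$ norms, so that it is integrated against exactly $s_{\rho(\ell)}$ at position $\ell$. Once this exponent bookkeeping is settled, the inner estimate is a direct citation of Lemma~\ref{lemma:Young2}.
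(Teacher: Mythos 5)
Your proof is correct and follows essentially the same route the paper intends: Lemma~\ref{lemma:Young2b} is stated there without its own proof, ``similarly'' to Lemma~\ref{lemma:Young1b}, whose justification is exactly your pull-out computation --- freeze the variables integrated after $\nu$, absorb their shift $\sigma$ into $g$ by translation invariance, apply the unpermuted Lemma~\ref{lemma:Young2} (with $m$ replaced by $w-1$) to the inner block, and integrate the remaining pure-tensor factors at exponents $s_{\rho(\ell)}=q_{\rho(\ell)}$ via (B0). Your reading of (B0) as governing the positions $\ell=w+1,\ldots,m+1$ (the paper's ``$\ell=w,\ldots,m$'' with $\kappa$ in place of $\rho$ is an indexing slip; taken literally it would force $q_{m+1}=s_{m+1}$ and trivialize the lemma) is the one the argument requires, and your exponent bookkeeping for the outer block is the same tensor factorization the paper uses in the proof of Lemma~\ref{lemma:Young1}.
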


\section{Boundedness on modulation spaces}\label{sec3}
When applying Lemmas~\ref{lemma:Young1}, ~\ref{lemma:Young2}, \ref{lemma:Young1b}, and \ref{lemma:Young2b}  in the context of modulation spaces, we can use the property that  $M^{p_{1},q_{1}}$ embeds continuously in $M^{p_{2},q_{2}}$ if $p_1\leq p_2$ and $q_1\leq q_2$. To exploit this in full,  the introduction of auxiliary parameters $\widetilde {\bm p}$ and $\widetilde {\bm s}$ is required as illustrated by Example~\ref{example:auxillary}  below.

%In addition, we shall state our results for weighted spaces.  In particular, we shall work with the following family of weight functions. 
%Let $v$
%be a weight function 
%on $\RR^{2(m+1)d}$  and assume that $w_0,w_1,\ldots, w_m$ be weights on $\Rr^{2d}$ such that
%\begin{equation}\label{WeightCondition}
% v(x,\vec t,-\vec \xi,\nu)^{a}\leq   w_{0}(x, \nu +
%S(\xi))^{a}w_1(x-  t_1,  \xi_1)\cdot \ldots \cdot w_m(x-  t_m,  \xi_m)
%\end{equation} whenever $a\in \{-1,1\}$.%\footnote{The equation above with parameter $a$ does not make sense to list here, just include it (with choice of $a$ wherever we need it.  This is notational overload.
%Let $v$be a weight function  on $\RR^{2(m+1)d}$  and $w_0,w_1,\ldots, w_m$ be weights on $\Rr^{2d}$ such that
%\begin{equation}\label{WeightCondition}
 %v(x,\bm t,\bm \xi,\nu)^{a}\leq   w_{0}(x, \nu +
%S(\xi))^{a}w_1(x-  t_1,  \xi_1)\cdot \ldots \cdot w_m(x-  t_m,  \xi_m)
%\end{equation} whenever $a\in \{-1,1\}$. }

\begin{proposition}\label{Vtilde}
Given $1\leq p_0, \bm p, \widetilde{\bm p},  \bm q, \widetilde{\bm
  q}, q_{m+1}, r_0, \bm r, \bm s, s_{m+1} \leq \infty$ with $\bm p\leq\widetilde{\bm
  p}\leq  \bm r $ and $ \bm s, \bm q \leq\widetilde{\bm q} $. Let
$\kappa$  be a permutation on $\{0,\ldots,m\}$ and  let $z=\kappa^{-1}(0)$. Similarly, let  $\rho$ be a permutation on  $\{1,2,\ldots,m+1\}$  and  $w=\rho^{-1}(m+1)$. Assume
 \begin{enumerate}
%\item $\displaystyle 1\leq r_0\leq p_1 \leq r_1 \leq p_2 \leq \ldots \leq r_{m-1} \leq p_{m}\leq r_m \leq \infty$;
\item $\displaystyle  \sum_{\ell =z+1}^{k} \frac 1{  \widetilde p_{\kappa(\ell)} } - \frac 1 {  r_{\kappa(\ell)}} \leq \frac 1{ r_0} - \frac 1 {\widetilde p_{\kappa(k+1)}}    , \quad k=z,\ldots, m-1$;
\item $\displaystyle \sum_{\ell =z+1}^m \frac 1 { \widetilde p_{\kappa(\ell)} } - \frac 1  {r_{\kappa(\ell)}} \geq \frac 1{ r_0} - \frac 1 {p_0}  $;
\item $\displaystyle  \sum_{\ell =k+1}^{w-1} \frac 1{\widetilde{q}_{\rho(\ell)}}  - \frac 1 {   {  s_{\rho(\ell)}}  }\geq \frac 1{  {s}_{m+1}} - \frac 1 { \widetilde {q}_{k}}    , \quad k=1,\ldots, w-1$;
\item \  $\displaystyle \sum_{\ell =1}^{w-1}\ \frac 1{\widetilde{q}_{\rho(\ell)}}  - \frac 1 {   {  s_{\rho(\ell)}}   }\geq \frac 1{ s_{m+1}} - \frac 1 {q_{m+1}} .$
\end{enumerate}
Let $v$
be a weight function 
on $\RR^{2(m+1)d}$  and assume that $w_0,w_1,\ldots, w_m$ are weights on $\Rr^{2d}$ such that
 \begin{equation}\label{WeightCondition}
 v(x,\bm t,\bm \xi,\nu) \leq   w_{0}(x, \nu +
S(\xi)) w_1(x-  t_1,  \xi_1)\cdot \ldots \cdot w_m(x-  t_m,  \xi_m).
\end{equation} 
For 
$\varphi \in S(\Rd)$ real valued, $\bm f \in M^{\bm p,{\kappa}; \bm q,{\rho}}_{\bm w}(\Rr^{md})$, and $g\in M^{p_{0},q_{m+1}}_{w_0}(\Rr^{d})$, we have
$\mathcal{V}_{T_A(\bm \varphi\otimes\varphi)}T_A(\overline{\bm
  f}\otimes g) \in L_{v}^{(r_0,\bm r)_\kappa, (\bm s, s_{m+1})_\rho}(\Rr^{2(m+1)d})$ with
\begin{align} \label{eqn:mainproposition}
 \|\mathcal{V}_{T_A(\bm \varphi\otimes\varphi)}T_A(\overline{\bm f}\otimes g)\|_{L_{v}^{(r_0,\bm r),\kappa; (\bm s, s_{m+1}),\rho}}\leq C\,
\|f_{1}\|_{M_{w_1}^{p_1,q_1}}\ldots \|f_m\|_{M_{w_m}^{p_m,q_m}} \|g\|_{M_{w_0}^{p_0,q_{m+1}}},
\end{align}
where the LHS is defined by integrating the variables in the index order 
$$
\kappa(0),\kappa(1),\ldots,\kappa(m),\rho(1),\ldots,\rho(m), \rho(m+1).
$$
In particular, $$\|T_A(\overline{\bm f}\otimes g)\|_{\mathcal{M}_{v}^{(r_0,\bm r)_\kappa, (\bm s, s_{m+1})_\rho}} \leq C\,
\|f_{1}\|_{M_{w_1}^{p_1,q_1}}\ldots \|f_{m}\|_{M_{w_m}^{p_m,q_m}} \|g\|_{M_{w_0}^{p_0,q_{m+1}}}.$$
Note that  $C$ depends only on the parameters $p_i$, $r_i$, $q_i$, $s_i$ and $d$.
\end{proposition}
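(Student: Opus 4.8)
The plan is to deduce the estimate~\eqref{eqn:mainproposition} from the two Young-type inequalities, Lemma~\ref{lemma:Young1b} and Lemma~\ref{lemma:Young2b}, applied in succession: first to the ``time'' block of variables and then to the ``frequency'' block, after the symbol short-time Fourier transform has been factored by Lemma~\ref{lemmaT_A}. Concretely, I would first combine Lemma~\ref{lemmaT_A} with the identity $\mathcal V_\phi F(x,\bm t,\bm\xi,\nu)=V_\phi F(x,\bm\xi,\nu,-\bm t)$ to write, up to sign changes in the arguments that leave every mixed $L^p$ norm unchanged,
\[
 \bigl|\mathcal V_{T_A(\bm\varphi\otimes\varphi)}T_A(\overline{\bm f}\otimes g)(x,\bm t,\bm\xi,\nu)\bigr|
 =\prod_{i=1}^m\bigl|V_\varphi f_i(x-t_i,\xi_i)\bigr|\cdot\bigl|V_\varphi g(x,\nu+S(\bm\xi))\bigr|.
\]
Using~\eqref{WeightCondition} to dominate $v$ by $w_0(x,\nu+S(\bm\xi))\prod_i w_i(x-t_i,\xi_i)$, the weighted integrand factors into $\prod_i|V_\varphi f_i(x-t_i,\xi_i)\,w_i(x-t_i,\xi_i)|$ times $|V_\varphi g(x,\nu+S(\bm\xi))\,w_0(x,\nu+S(\bm\xi))|$, each factor carrying a single function.

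\emph{Time integration.} Fixing the frequencies $\bm\xi,\nu$ and setting $F_i(y)=|V_\varphi f_i(y,\xi_i)w_i(y,\xi_i)|$ and $G(x)=|V_\varphi g(x,\nu+S(\bm\xi))w_0(x,\nu+S(\bm\xi))|$, the integrand in $(x,\bm t)$ is exactly $\bm F(x-\bm t)G(x)$, the form treated by Lemma~\ref{lemma:Young1b}. Conditions (1)--(2) together with $\bm p\le\widetilde{\bm p}\le\bm r$ provide its hypotheses (A1)--(A2) for the exponents $(r_0,\bm r)$ and $(p_0,\widetilde{\bm p})$; as (2) is merely an inequality I would first replace $p_0$ by an intermediate $p_0'\ge p_0$ for which the equality (A3) holds, apply the lemma, and embed back at the end. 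This gives, pointwise in $\bm\xi,\nu$,
\[
 \bigl\|\bm F(x-\bm t)G(x)\bigr\|_{L^{(r_0,\bm r)_\kappa}}
 \le\|G\|_{L^{p_0'}}\prod_{i=1}^m\|F_i\|_{L^{\widetilde p_i}}
 =\Phi_0(\nu+S(\bm\xi))\prod_{i=1}^m\Phi_i(\xi_i),
\]
with $\Phi_i(\xi_i)=\|V_\varphi f_i(\cdot,\xi_i)w_i(\cdot,\xi_i)\|_{L^{\widetilde p_i}}$ and $\Phi_0(\eta)=\|V_\varphi g(\cdot,\eta)w_0(\cdot,\eta)\|_{L^{p_0'}}$.

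\emph{Frequency integration and conclusion.} Applying the outer mixed norm $\|\cdot\|_{L^{(\bm s,s_{m+1})_\rho}}$ to both sides --- permissible since the two norms are nested time-then-frequency and the preceding bound is pointwise in the frequencies --- the frequency integrand $\prod_i\Phi_i(\xi_i)\,\Phi_0(\nu+S(\bm\xi))$ is precisely the shape $\bm f(\bm\xi)g(\nu+S(\bm\xi))$ of Lemma~\ref{lemma:Young2b}. Conditions (3)--(4) with $\bm s,\bm q\le\widetilde{\bm q}$ yield its hypotheses (B1)--(B3) for $(\bm s,s_{m+1})$ and $(\widetilde{\bm q},q_{m+1})$, once more after passing to an intermediate $q_{m+1}'\ge q_{m+1}$ enforcing the equality (B3). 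The lemma then bounds the left-hand side by $\prod_i\|\Phi_i\|_{L^{\widetilde q_i}}\,\|\Phi_0\|_{L^{q_{m+1}'}}=\prod_i\|f_i\|_{M_{w_i}^{\widetilde p_i,\widetilde q_i}}\,\|g\|_{M_{w_0}^{p_0',q_{m+1}'}}$. Finally, the continuous embeddings $M^{p_i,q_i}\hookrightarrow M^{\widetilde p_i,\widetilde q_i}$ and $M^{p_0,q_{m+1}}\hookrightarrow M^{p_0',q_{m+1}'}$ (valid because $p_i\le\widetilde p_i$, $q_i\le\widetilde q_i$, $p_0\le p_0'$, $q_{m+1}\le q_{m+1}'$) replace the auxiliary exponents by the stated ones and produce the constant $C$, which depends only on the exponents and $d$. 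The ``in particular'' claim is then immediate from the definition $\|\cdot\|_{\mathcal M_v}=\|\mathcal V_\phi(\cdot)\|_{L_v}$ applied with the window $\phi=T_A(\bm\varphi\otimes\varphi)$.

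I expect the main difficulty to be the consistent bookkeeping of the auxiliary exponents across the two nested Young inequalities: one must verify that the intermediate $\widetilde{\bm p},\widetilde{\bm q}$ and the equality-enforcing $p_0',q_{m+1}'$ simultaneously satisfy the (permuted) hypotheses (A0)--(A3) and (B0)--(B3), that the embedding steps run in the correct direction, and --- conceptually the key point --- that the output of the time step depends on the frequencies only through the tensor factors $\Phi_i(\xi_i)$ and the single coupling $\Phi_0(\nu+S(\bm\xi))$, which is exactly what makes Lemma~\ref{lemma:Young2b} applicable.
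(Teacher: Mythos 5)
Your proposal is correct and follows essentially the same route as the paper: factor the symbol short-time Fourier transform via Lemma~\ref{lemmaT_A}, dominate the weight using \eqref{WeightCondition}, apply the time-variable Young inequality (Lemma~\ref{lemma:Young1}/\ref{lemma:Young1b}) pointwise in the frequencies with the auxiliary exponents $\widetilde{\bm p}$ and an enlarged $p_0'$ forcing equality in (A3), then the frequency-variable Young inequality (Lemma~\ref{lemma:Young2}/\ref{lemma:Young2b}) with $\widetilde{\bm q}$ and $q_{m+1}'$, and finish with the modulation-space embeddings. The only cosmetic difference is that the paper writes out the identity-permutation case and asserts the general case by analogy, whereas you invoke the permuted lemmas directly and explicitly flag the (A0)/(B0) bookkeeping, which is if anything slightly more careful than the paper's own treatment.
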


\begin{proof} For simplicity we assume $\rho=\kappa=id$ and use Lemma~\ref{lemma:Young1} and  Lemma~\ref{lemma:Young2}. The general case follows as   Lemma~\ref{lemma:Young1b} and  Lemma~\ref{lemma:Young2b} followed from Lemma~\ref{lemma:Young1} and  Lemma~\ref{lemma:Young2}.

Let $\bm f=(f_1,f_2,\ldots,f_m)$,  $\bm\phi=(\phi,\phi,\ldots,\phi)$ and $\bm w=(w_1.w_2.\ldots.w_m)$. Then
$$V_{\bm\phi}\bm f=V_\phi f_1\otimes\ V_\phi f_2\otimes\dots \otimes V_\phi f_m,$$
and by Lemma  \ref{lemmaT_A}, we have
$$
	\mathcal V_{T_A(\bm\varphi\otimes\varphi)}T_A(\overline{\bm f}\otimes g)(x,-\bm \xi,\bm t, \nu)=
\overline{  V_{\bm\varphi}\bm  f (x-\bm t, \bm\xi)} V_{\varphi}g (x,\nu-S(\bm \xi)),
 $$
where $x,\nu \in \Rd$, and $\bm t, \bm \xi\in\RR^{md}$.

So, if~\eqref{WeightCondition} and conditions (2) and (4) above hold with equality, then 
(A1)--(A3) in Lemma~\ref{lemma:Young1} and (B1)--(B3)  in Lemma~\ref{lemma:Young2} will hold. Then
%,  (2) and (4) above hold with equality. Then  
%for $\nu\in\Rd$ and $\bm \xi \in \RR^{md}$ 
\begin{align}
&\|v(x,\bm t,\bm \xi,\nu)\mathcal{\bm V}_{T_A(\bm \varphi\otimes\varphi)}T_A(\overline{\bm f}\otimes g)(x,\bm t,\bm \xi,\nu)\|_{L^{(r_0,\bm r), (\bm s, s_{m+1})}(x,\bm t,\bm \xi,\nu)}\nonumber\\
&\leq\big\| \| \bm w(x-\bm t,\xi)\left(V_{\bm \varphi}\bm f\right)(x-\bm t,\bm \xi) w_0 (x,\nu+S(\bm \xi))\left(V_{\varphi}g\right)(x,\nu+S(\bm \xi)) \|_{L^{r_0,\bm r}(x,\bm t)}\big \|_{L^{\bm s, s_{m+1} }(\bm \xi,\nu)} \nonumber\\
&\leq\big\|\| \bm w(\bm t,\xi)\left(V_{\bm \varphi}\bm f\right)(\bm t,\bm \xi)\|_{L^{\bm p}(\bm t)} \|w_0(x,\nu+S(\bm \xi))\left(V_{\varphi}g\right)(x,\nu+S(\bm \xi))\|_{L^{p_0}(x)}\big \|_{L^{\bm s, s_{m+1} }(\bm \xi,\nu)} \nonumber\\
&\leq \big\|\| \bm w(\bm t,\xi)\left(V_{\bm \varphi}\bm f\right)(\bm t,\bm \xi)\|_{L^{\bm p}(\bm t)} \big \|_{L^{\bm q}(\bm \xi)} \nonumber\\
&\qquad \qquad \qquad \big\| \|w_0(x,\nu+S(\bm \xi))\left(V_{\varphi}g\right)(x,\nu+S(\bm \xi))\|_{L^{p_0}(x)} \big \|_{L^{ q_{m+1} }(\nu)}  \nonumber\\
&= \| V_{\bm \varphi}\bm f\|_{L_{\bm w}^{\bm p,\bm q} }  
\   \| V_{\varphi}g \|_{L_{w_0}^{p_0  q_{m+1} } } \nonumber.
\end{align}

We now use  that $p\leq \widetilde p$ and $q\leq \widetilde q$ implies
$
 \|f\|_{M^{\widetilde p,\widetilde q}} \lesssim \|f\|_{M^{ p, q}},
$
a property that clearly carries through to the class of weighted modulation spaces considered in this paper. 
If hypotheses (2) and (4)  hold with strict inequalities, then we can increase $p_0$ to appropriate $\widetilde p_0$ and $q_{m+1}$ to appropriate $\widetilde q_{m+1}$ so that (2) and (4) will hold with equalities. 
% Hence, if we can
%apply Lemmas~\ref{lemma:Young1} and  \ref{lemma:Young2} with $\bm p$ and $\bm q$ replaced by $\widetilde {\bm p}$  and $\widetilde {\bm q}$ where $\bm p\leq\widetilde{\bm
%  p}\leq  \bm r $ and $\bm q\leq\widetilde{\bm s}\leq  \bm s $, then  \eqref{en:mainproposition} holds for both, $\bm p$ and $\bm q$ as well as $\widetilde {\bm p}$  and $\widetilde {\bm q}$.  But in order to apply Lemmas~\ref{lemma:Young1} and  \ref{lemma:Young2}, we would also require an equality in (2) and (4).  But this can always achieved by increasing $p_0$ to appropriate $\widetilde p_0$ and $q_0$ to appropriate $\widetilde q_0$. 
The resulting inequalities involving $\widetilde p_0$ and $\widetilde q_{m+1}$ then again implies the weaker inequalities involving $ p_0$ and $q_{m+1}$.

%As third mean to increase the applicability of this result, we use the fact that we can apply Lemmas~\ref{lemma:Young1} and  \ref{lemma:Young2} only to a subset of the variables $t_1,\ldots,t_m$ and $\xi_1,\ldots,\xi_m$. 
%Indeed, , the variables $t_\ell$ with $\ell=\kappa(u)$ for $u<z$ simply integrate out and do not play a true role here (check the integrations in the Lemma). Same goes for the $\xi_\ell$ with $\ell=\kappa(u)$ for $u>w$.
\end{proof}

\begin{example}\label{example:auxillary} \rm
The conditions $r_0\leq p_k\leq r_k$, $k=1,\ldots,m$, and
   $\sum_{\ell=1}^m1/p_\ell-1/r_\ell \geq 1/r_0-1/p_0$ do not
   guarantee the existence of a permutation $\kappa$ so that also
   $\sum_{\ell=1}^k 1/p_{\kappa(\ell)}-1/r_{\kappa(\ell)} \geq
   1/r_0-1/p_{\kappa(k+1)}$, $k=0,\ldots,m-1$.  Indeed, consider for $m=2$ the case $r_0=1$, $p_1=p_2=10/9$, $r_1=r_2=2$, and $p_0=5$. It is easy to see that no $\kappa$ exist that allows us to apply Proposition~\ref{Vtilde} to obtain for these parameters \eqref{eqn:mainproposition}.  Using $r_0=1$, $p_1=p_2=10/9$, $r_1=r_2=2$, $r_1=r_2=2$, we can choose $\kappa(0)=1$, $\kappa(1)=0$, $\kappa(2)=2$, to obtain \eqref{eqn:mainproposition} for $p_0 \leq 5/3$.

Unfortunately, this is again not the best we can do.  In fact, we can replace $p_2$ by $\widetilde p_2=15/9\in [10/9,18/9]=[p_2,r_2]$.  This choice allows us to choose for $\kappa$ the identity which leads to sufficiency for $p_0\leq 2$, which by inclusion also gives boundedness with $r_0=1$, $p_1=p_2=10/9$, $r_1=r_2=2$, $r_1=r_2=2$.   
  \end{example}

\begin{remark}\rm Observe those $k$ with $r_0> r_k$ must satisfy $\kappa^{-1}(k)< z$;  possibly there are also $k$ with $r_0\leq r_k$ and $\kappa^{-1}(k)< z$.
Importantly, only those $k$ with $p_k < r_k$ and $\kappa^{-1}(k)>z$ contribute to filling the gap between $p_0$ and $r_0$, see Remark~\ref{remark:stacking}
 \end{remark}

As immediate consequence, we obtain our first main result.

\begin{theorem}\label{thm:main} 
Given $1\leq p_0, \bm p, \widetilde{\bm p},  \bm q,\widetilde{\bm
  q}, q_{m+1}, r_0, \bm r, \bm s, s_{m+1} \leq \infty$ with $\bm p\leq\widetilde{\bm
  p}\leq  \bm r' $ and $\bm q,\bm s' \leq \widetilde{\bm q} $. Let
$\kappa$  be a permutation on $\{0,\ldots,m\}$ and  let $z=\kappa^{-1}(0)$. Similarly, let  $\rho$ be a permutation on  $\{1,2,\ldots,m+1\}$  and  $w=\rho^{-1}(m+1)$ and
 \begin{enumerate}
%\item $\displaystyle 1\leq r_0\leq p_1 \leq r_1 \leq p_2 \leq \ldots \leq r_{m-1} \leq p_{m}\leq r_m \leq \infty$;
\item $\displaystyle  \frac 1{ r_0} + \frac 1 {\widetilde p_{\kappa(k+1)}}+\sum_{\ell =z+1}^{k}  \frac 1{\widetilde p_{\kappa(\ell)}} + \frac 1 {  r_{\kappa(\ell)}} \leq k{-}z{+}1,  \ k=z,\ldots, m{-}1$;
\item $\displaystyle \frac 1 {r_0}+\sum_{\ell =z+1}^m \frac 1{\widetilde p_{\kappa(\ell)}}  + \frac 1 {  r_{\kappa(\ell)}}\geq m-z + \frac 1 {p_0} $;
\item $\displaystyle \frac 1 { s_{m+1}} + \frac 1{ \widetilde{q}_{\rho(k)}} +  \sum_{\ell =k+1}^{w-1} \frac 1{\widetilde{q}_{\rho(\ell)}}  + \frac 1   {  s_{\rho(\ell)}}\geq  w-k     , \quad k=1,\ldots, w-1$;
\item $\displaystyle  \frac 1 {s_{m+1}}+ \sum_{\ell =1}^{w-1} \frac 1{\widetilde{q}_{\rho(\ell)}}  + \frac 1 {   s_{\rho(\ell)} }\geq w-1 +\frac 1{ q_{m+1}}  $.
\end{enumerate}
Let $v$
be a weight function 
on $\RR^{2(m+1)d}$  and assume that $w_0,w_1,\ldots, w_m$ are weights on $\Rr^{2d}$ such that
\begin{equation}\nonumber%\label{WeightCondition-1}
 v(x,\bm t,-\bm \xi,\nu)^{-1}\leq   w_{0}(x, \nu +
S(\xi))^{-1}w_1(x-  t_1,  \xi_1)\cdot \ldots \cdot w_m(x-  t_m,  \xi_m).
\end{equation} 

Assume that $\sigma \in \MV.$  Then the multilinear pseudodifferential operator $T_\sigma$ defined
initially for  $f_k \in S(\Rd)$ for $k=1,
2, \hdots, m$ by ~\eqref{mpsido} extends to a bounded multilinear
operator from $$M^{p_{1}, q_1}_{w_1}\times M^{p_{2}, q_2}_{w_2} \times
\hdots \times M^{p_{m}, q_m}_{w_m}\quad \textrm{ into}\quad  M^{p_{0},
  q_{m+1}}_{w_0}.$$ Moreover, there exists a constant $C$ so that for all  $\bm f$, we have
\begin{align*}
 \|T_\sigma \bm f\|_{M_{w_0}^{p_0q_{m+1}}}\leq C \|\sigma\|_{\MV}
\|f_1\|_{M_{w_1}^{p_1,q_1}}\ldots \|f_m\|_{M_{w_m}^{p_m,q_m}} .% \label{eqn:mainresult}
\end{align*}
%where again the ${M(\kappa,\rho)}$ determines the order of integration.
\end{theorem}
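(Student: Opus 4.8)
The plan is to establish the estimate by duality, reducing it to the already–proven Proposition~\ref{Vtilde}. First I would fix $\bm f\in\S(\Rd)^m$ and compute the target norm by testing against the (pre)dual, writing
$$
\|T_\sigma\bm f\|_{M^{p_0,q_{m+1}}_{w_0}}=\sup\big\{\,|\ip{T_\sigma\bm f}{g}|:\ g\in\S(\Rd),\ \|g\|_{M^{p_0',q_{m+1}'}_{1/w_0}}\le 1\,\big\},
$$
which is valid for all $1\le p_0,q_{m+1}\le\infty$ once one invokes the density of $\S$ in the relevant (weighted) modulation spaces, together with weak-$*$ density at the endpoints $p_0=\infty$ or $q_{m+1}=\infty$. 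The gain from this reduction is that the pairing is governed by the Rihaczek transform through the identity $\ip{T_\sigma\bm f}{g}=\ip{\sigma}{\overline{R(\bm f,g)}}$, where $\overline{R(\bm f,g)}\in\S(\RR^{(m+1)d})$, so that $\sigma\in\MV\subset\S'$ pairs with it unambiguously.

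Next I would pass to the time–frequency side via the orthogonality (Moyal) relation for the symbol short-time Fourier transform: choosing $\overline{R(\bm\varphi,\varphi)}$ as analyzing window for $\overline{R(\bm f,g)}$ and a companion window for $\sigma$, one obtains
$$
\ip{\sigma}{\overline{R(\bm f,g)}}=c\,\ip{\mathcal V_\phi\sigma}{\,V_{\overline{R(\bm\varphi,\varphi)}}\overline{R(\bm f,g)}\,}
$$
with $c\neq 0$ depending only on the windows. Splitting the weight as $v\cdot v^{-1}$ and applying H\"older's inequality for the permuted mixed-norm spaces, i.e.\ pairing $L^{(r_0,\bm r),\kappa;(\bm s,s_{m+1}),\rho}_{v}$ against its conjugate $L^{(r_0',\bm r'),\kappa;(\bm s',s_{m+1}'),\rho}_{1/v}$, yields
$$
|\ip{T_\sigma\bm f}{g}|\le C\,\|\sigma\|_{\MV}\ \big\|V_{\overline{R(\bm\varphi,\varphi)}}\overline{R(\bm f,g)}\big\|_{L^{(r_0',\bm r'),\kappa;(\bm s',s_{m+1}'),\rho}_{1/v}}.
$$

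The heart of the argument is to control the second factor. By Lemma~\ref{lemmaT_A} the magnitude of $V_{\overline{R(\bm\varphi,\varphi)}}\overline{R(\bm f,g)}$ coincides, up to measure-preserving reflections of the time and frequency variables that leave the mixed norm invariant, with that of $\mathcal V_{T_A(\bm\varphi\otimes\varphi)}T_A(\overline{\bm f}\otimes g)$. I would therefore invoke Proposition~\ref{Vtilde} with the conjugate decay parameters $r_0',\bm r',\bm s',s_{m+1}'$ in place of $r_0,\bm r,\bm s,s_{m+1}$, with $p_0',q_{m+1}'$ in place of $p_0,q_{m+1}$, and with weight $1/v$ and $1/w_0$ in the $g$-slot; the reflection $\bm\xi\mapsto-\bm\xi$ is precisely what matches the theorem's hypothesis $v(x,\bm t,-\bm\xi,\nu)^{-1}\le w_0^{-1}\,w_1\cdots w_m$ to \eqref{WeightCondition}. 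This bounds the second factor by $\|f_1\|_{M^{p_1,q_1}_{w_1}}\cdots\|f_m\|_{M^{p_m,q_m}_{w_m}}\|g\|_{M^{p_0',q_{m+1}'}_{1/w_0}}$, and taking the supremum over $g$ finishes the proof. The essential bookkeeping is to verify that hypotheses (1)--(4) here are \emph{exactly} hypotheses (1)--(4) of Proposition~\ref{Vtilde} after the substitutions $1/r_k\mapsto 1-1/r_k$, $1/s_k\mapsto 1-1/s_k$, $1/p_0\mapsto 1-1/p_0$, $1/q_{m+1}\mapsto 1-1/q_{m+1}$; a direct computation confirms, for instance, that Proposition~\ref{Vtilde}(2) becomes $\tfrac1{r_0}+\sum_{\ell=z+1}^m\tfrac1{\widetilde p_{\kappa(\ell)}}+\tfrac1{r_{\kappa(\ell)}}\ge m-z+\tfrac1{p_0}$, and similarly for the remaining three.

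I expect the main obstacle to be the rigorous justification of the two soft reductions rather than the quantitative estimate: making the duality formula for the target norm precise at the endpoints (where one argues by weak-$*$ density and the fact that $\S$ is norming for $M^{p,q}_w$), and validating the Moyal-type orthogonality relation for a tempered distribution $\sigma$ paired with the Schwartz function $\overline{R(\bm f,g)}$ rather than merely for $L^2$ functions. Once these are secured, the transfer from Proposition~\ref{Vtilde} through Lemma~\ref{lemmaT_A} and the conjugate-exponent bookkeeping are routine.
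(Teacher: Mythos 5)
Your proposal is correct and follows essentially the same route as the paper's own proof: duality via the norming property of $M^{p_0',q_{m+1}'}_{1/w_0}$ (the paper cites \cite[Proposition 1.2(3)]{Tof07}), the Rihaczek identity $\ip{T_\sigma\bm f}{g}=\ip{\sigma}{\overline{R(\bm f,g)}}$, the STFT inner-product relation, H\"older in the permuted weighted mixed norms, and then Lemma~\ref{lemmaT_A} together with Proposition~\ref{Vtilde} applied with the conjugate exponents $r_0',\bm r',\bm s',s_{m+1}',p_0',q_{m+1}'$ and the weights $1/v$, $1/w_0$. Your conjugate-exponent bookkeeping (keeping $\widetilde{\bm p},\widetilde{\bm q}$ unconjugated, consistent with $\bm p\leq\widetilde{\bm p}\leq\bm r'$ and $\bm q,\bm s'\leq\widetilde{\bm q}$) reproduces exactly the paper's observation that hypotheses (1)--(4) are equivalent to those of Proposition~\ref{Vtilde} under these substitutions.
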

 
\begin{proof}
%We first assume that $r_0, \bf r, \bf s$,$s_{m+1}< \infty.$
%Let  $f_k, g, \varphi  \in S(\Rd)$, and denote $\bm\varphi=(\varphi,\ldots,\varphi)$. Using Lemma~\ref{VRfg} we have the following estimates
%\begin{align}\label{bdness-estimate}
%|\ip{T_\sigma \bm f}{g}| &=|\ip{\sigma}{\overline{R(\bm f, g)}}|=|\ip{V_{\overline{R(\bm \varphi, \varphi)}}\sigma}{V_{\overline{R(\bm \varphi, \varphi)}}\overline{R(\bm f, g)}}\notag \\
%& \leq \|\sigma\|_{\MV} \|R(\bm f, g)\|_{\mathcal{M}_{1/w}^{(r_0',\bm r')_{\kappa}, (\bm s',s_{m+1}')_{\rho}}}. 
%\end{align}
%Using the conjugate indices $r_0', {\bf r'}, s_{m+1}',  \bf s'$, it is easy to see that the conditions on the indices (1)--(4) are equivalent to those in Proposition~\ref{Vtilde}. Therefore, 
%$$ \|R(\bm f, g)\|_{\mathcal{M}_{1/w}^{(r_0',\bm r')_{\kappa}, (\bm s',s_{m+1}')_{\rho}}} \leq C \|f_1\|_{M_{w_1}^{p_1,q_1}}\ldots \|f_m\|_{M_{w_m}^{p_m,q_m}}\|g\|_{M_{w_{0}}^{p_0', q_{m+1}'}}$$ which concludes the proof. 
%
%
%
%Now we consider the case where some (or all) of the parameters $r_0, \bf r, \bf s, s_{m+1}$ are infinity.

Let  $f_k\in M_{w_k}^{p_k,q_k}$, $k=1,\ldots m$,  $\varphi  \in S(\Rd)$, and denote $\bm\varphi=(\varphi,\ldots,\varphi)$. Note that
   $$\sup\{ |\langle \cdot ,g \rangle|,\ \ g\in M_{1/ w_0}^{p_0',q_{m+1}'} \} $$ defines a norm which is equivalent to $\|\cdot\|_{M_{w_0}^{p_0q_{m+1}}}$ for $p_0,q_{m+1}\in [1,\infty]$ (see, for example,  \cite[Proposition 1.2(3)]{Tof07}). Hence, to complete our result
 on the basis of  Lemma~\ref{lemmaT_A}, we estimate for $ g\in M_{1/ w_0}^{p_0',q_{m+1}'}$ as follows
\begin{align*}%\label{bdness-estimate}
|\ip{T_\sigma \bm f}{g}| &=|\ip{\sigma}{\overline{R(\bm f, g)}}|=|\ip{V_{\overline{R(\bm \varphi, \varphi)}}\sigma}{V_{\overline{R(\bm \varphi, \varphi)}}\overline{R(\bm f, g)}}\notag \\
& \leq \|\sigma\|_{\MV} \|R(\bm f, g)\|_{\mathcal{M}_{1/w}^{(r_0',\bm r')_{\kappa}, (\bm s',s_{m+1}')_{\rho}}}.
\end{align*}
Using the conjugate indices $r_0', {\bf r'_{\kappa}}, s_{m+1}',  \bf s'_\rho$, it is easy to see that the conditions on the indices (1)--(4) are equivalent to those in Proposition~\ref{Vtilde}. Therefore, 
\[ \|R(\bm f, g)\|_{\mathcal{M}_{1/w}^{(r_0',\bm r')_{\kappa}, (\bm s',s_{m+1}')_{\rho}}} \leq C \|f_1\|_{M_{w_1}^{p_1,q_1}}\ldots \|f_m\|_{M_{w_m}^{p_m,q_m}}\|g\|_{M_{w_{0}}^{p_0', q_{m+1}'}}. \qedhere \]
\end{proof}

Note that the criteria on time and frequency are separated.  Even when it comes to order of integration, we do not link these, that is, the permutations $\kappa$ and $\rho$ are not necessarily identical.

\begin{corollary}\label{thm:increasing}
If 
\begin{eqnarray*}
  1&\leq r_0'\leq p_1 \leq r_1' \leq p_2 \leq \ldots \leq r_{m-1}' \leq p_{m}  \leq r_m' &\leq \infty;\\
                1&\leq s_{1}' \leq q_1 \leq s_2' \leq q_2 \leq \ldots \leq q_{m-1} \leq s_{m}' \leq q_m &\leq s_{m+1}' \leq \infty;
\end{eqnarray*}
and
\begin{align*}
 \frac 1 {r_0}+ \sum_{\ell =1}^m  \frac 1{ p_{\ell}}  + \frac 1 {  r_{\ell}} &\geq m + \frac 1 {p_0} ;\\
  \frac 1 {s_{m+1}}+ \sum_{\ell =1}^m \frac 1{q_{\ell}}  + \frac 1 { s_{ \ell}}   &\geq m+\frac 1{ q_{m+1}}  ;
\end{align*} 
then the conclusion of Theorem~\ref{thm:main} for any symbol  $\sigma
\in \mathcal{M}_v^{(r_0,\bm r)_{\kappa}, (\bm s,s_0)_{\rho}},$ where $\kappa, \rho$ are
the identity permutations. 
%exists a constant $C$ so that for all $H$,  $\bm f$, we have
%\begin{align}
% \|Hf\|_{M_{w_0}^{p_0q_0}}\leq C \|\sigma\|_{M(\kappa,\rho)_v^{r_0,\bm r, \bm %s,s_0}}
%\|f_1\|_{M_{w_1}^{p_1,q_1}}\ldots \|f_m\|_{M_{w_m}^{p_m,q_m}} . \label{eqn:mai%nresult}
%\end{align}
%where the order of integration is $x,t_1,t_2,\ldots, t_m,\xi_1,\xi_2,\ldots,\x%i_m,\nu$
\end{corollary}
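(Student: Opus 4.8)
The plan is to obtain this as the special case $\kappa=\rho=\mathrm{id}$ of Theorem~\ref{thm:main}, taking the auxiliary parameters equal to the base ones, $\widetilde{\bm p}=\bm p$ and $\widetilde{\bm q}=\bm q$. For the identity permutations we have $z=\kappa^{-1}(0)=0$ and $w=\rho^{-1}(m+1)=m+1$, so the index reordering in hypotheses (1)--(4) of Theorem~\ref{thm:main} disappears and the admissibility requirements reduce to $\bm p\le\bm r'$ and $\bm s'\le\bm q$. It therefore suffices to check that the two chains and the two summation inequalities in the corollary imply hypotheses (1)--(4) of Theorem~\ref{thm:main} for these choices.

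I would first dispose of the summation inequalities, which require no work. With $\widetilde{\bm p}=\bm p$, $\widetilde{\bm q}=\bm q$ and $z=0$, $w=m+1$, hypotheses (2) and (4) of Theorem~\ref{thm:main} read verbatim
\begin{align*}
 \tfrac 1 {r_0}+\sum_{\ell =1}^m\Big(\tfrac 1{ p_{\ell}}  + \tfrac 1 {  r_{\ell}}\Big)\ge m+\tfrac 1{p_0}, \qquad
 \tfrac 1 {s_{m+1}}+\sum_{\ell =1}^m\Big(\tfrac 1{q_{\ell}}  + \tfrac 1 { s_{ \ell}}\Big)\ge m+\tfrac 1{q_{m+1}},
\end{align*}
which are exactly the corollary's two summation hypotheses.

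For hypotheses (1) and (3) I would invoke the observations recorded in Remark~\ref{remark:increasing} and in the remark following Lemma~\ref{lemma:Young2}. The first chain $1\le r_0'\le p_1\le r_1'\le\cdots\le p_m\le r_m'$ is precisely condition (A4) written in the conjugate parameters $r_\ell'$; pairing its inequalities as $\tfrac1{p_\ell}\le\tfrac1{r_{\ell-1}'}$ and summing gives $\sum_{\ell=1}^{k+1}\tfrac1{p_\ell}\le\tfrac1{r_0'}+\sum_{\ell=1}^{k}\tfrac1{r_\ell'}$ for each $k=0,\ldots,m-1$, and substituting $\tfrac1{r_\ell'}=1-\tfrac1{r_\ell}$ turns this into hypothesis (1) of Theorem~\ref{thm:main} (with $\widetilde{\bm p}=\bm p$). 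The same chain also delivers $\bm p\le\bm r'$. Symmetrically, the second chain is condition (B4) in the conjugate parameters $s_\ell'$, and the telescoping argument behind (B2) yields hypothesis (3) together with $\bm s'\le\bm q$.

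With all four hypotheses verified for $\kappa=\rho=\mathrm{id}$ and $\widetilde{\bm p}=\bm p$, $\widetilde{\bm q}=\bm q$, the boundedness assertion and the norm estimate follow at once from Theorem~\ref{thm:main}. Since no new analytic content is introduced, the only care needed is the bookkeeping that converts the chains, stated in the conjugate exponents $r_\ell',s_\ell'$, into the unprimed hypotheses (1) and (3); the essential point---and the reason the auxiliary parameters may be chosen trivially, without the kind of adjustment illustrated in Example~\ref{example:auxillary}---is that a monotone chain automatically supplies the full ordered inequalities (A2) and (B2), not merely (A1) and (B1).
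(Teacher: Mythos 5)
Your proof is correct and takes essentially the same route as the paper: specialize Theorem~\ref{thm:main} to $\kappa=\rho=\mathrm{id}$ (so $z=0$, $w=m+1$) with the trivial choices $\widetilde{\bm p}=\bm p$, $\widetilde{\bm q}=\bm q$, observe that hypotheses (2) and (4) are then verbatim the corollary's summation assumptions, and deduce (1) and (3) from the two monotone chains. Your pairing/telescoping verification (each bracketed term $\frac1{r_\ell}+\frac1{p_{\ell+1}}\le 1$, resp.\ $\frac1{q_\ell}+\frac1{s_{\ell+1}}\ge 1$) simply spells out what the paper's proof compresses into the phrase that these conditions ``follow from the monotonicity conditions.''
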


\begin{proof}
 Note that since $\kappa, \rho$ are the identity permutations, then $z=0$ and $\omega=m+1$. 
 
\begin{enumerate}
  \item $\displaystyle  \frac 1{ r_0} + \frac 1 { p_{k+1}}+\sum_{\ell =1}^{k}   \frac 1{ p_{\ell}} + \frac 1 {  r_{\ell}} \leq k+1,  \ k=0,\ldots, m{-}1$;
\item $\displaystyle \frac 1{  {s}_{m+1}}  +\frac 1 {q_k}+ \sum_{\ell =k+1}^{m} \frac 1{q_{\ell}}  + \frac 1 { s_{ \ell}} \geq  m-k +  1     , \quad k=1,\ldots, m$,
\end{enumerate}
follow from the monotonicity conditions.
\end{proof}

\section{Applications}\label{sec:applications}
 In Section~\ref{subsec4.1} we simplify the conditions  of Theorem~\ref{thm:main} in case of bilinear operators, that is, $m=2$. The focus of Section~\ref{sub4.2} lies on establishing  boundedness of the bilinear Hilbert transform on products of modulation
spaces. We stress  that these results are beyond the reach of  existing methods of time-frequency analysis of bilinear pseudodifferential operators as developed in \cite{BGHO, BeOk04, BeOk06, abekok07}. Finally, in
Section~\ref{sec5} we  consider the trilinear Hilbert transform.
% We also point out that corresponding results for the Hilbert transform on the
%modulation spaces were obtained in \cite{bggo05, bgor07, fn06}.

\subsection{Bilinear pseudodifferential operators}\label{subsec4.1}

A bilinear
pseudodifferential operator with symbol $\sigma$ is formally defined by 
\begin{equation}\label{bpsido}
T_{\sigma}(f, g) (x) = \iint_{\Rr\times
  \Rr}\sigma (x,\xi_1,\xi_2)\hat{f}(\xi_1)\hat{g}(\xi_2)\, d\xi_1\,
d\xi_2.
\end{equation}% for all $f, g \in S(\Rd)$. 
For $m=2$, Theorem~\ref{thm:main} simplifies to the following.

\begin{theorem}\label{thm:bilinear}
Let $1\leq p_0, p_1,p_2, q_1,q_2, q_3, r_0,r_1,r_2, s_1,s_2, s_3
\leq \infty$. If $$1/p_1+1/ r_1,\ 1/p_2+1/ r_2\geq 1$$
and one of the following
\begin{align} \tag{1}
 \frac 1 {p_0} &\leq \frac 1  {r_0}, 
 				\ \ \quad\quad \qquad\qquad\qquad\qquad  \quad (\text{using } \kappa=(1,2,0) \text{ or } (2,1,0));\\
 \tag{2}1+ \frac 1 {p_0} &\leq \frac 1{r_0} + \frac1{r_1} + \frac 1{ p_1 },\quad  r_1\leq p_0,r_0,   \quad  ( \kappa=(2,0,1)); \\  \tag{3}
 1+\frac 1 {p_0} &\leq \frac 1{r_0} + \frac1{r_2} + \frac 1{ p_2 },\quad  r_2\leq p_0,  r_0, \quad (  \kappa=(1,0,2)); \\  \tag{4}
2+ \frac 1 {p_0}&\leq \frac 1 {r_0} + \frac 1{r_1}+\frac 1{r_2} +\frac 1{\max\{p_1, r_0'\}}+\frac 1{p_2},\quad  r_2\leq p_0,\quad  r_1,r_2 \leq r_0, \quad (\kappa=(0,1,2));
 \\  \tag{5}
2+ \frac 1 {p_0}&\leq \frac 1 {r_0} + \frac 1{r_1}+\frac 1{r_2} +\frac 1{\max\{p_2, r_0'\}}+\frac 1{p_1},\quad  r_1\leq p_0,\quad  r_1,r_2 \leq r_0, \quad (\kappa=(0,2,1));
\end{align}
as well as one of
\begin{align}   
   \tag{1} \frac 1 {q_3} & \leq \frac 1 {s_3}, \ \ \qquad   (\text{using } \rho=(3,1,2) \text{ or } (3,2,1));\\  \tag{2}
1+\frac 1 {q_3}&\leq \frac 1 {q_1}+\frac 1 {s_1}+\frac 1 {s_3} , \quad s_3 \leq q_3,s_1,q_1' ,	
									\qquad 
				 ( \rho=(1,3,2));\\  \tag{3}
				1+\frac 1 {q_3}&\leq \frac 1 {q_2}+\frac 1 {s_2}+\frac 1 {s_3} , \quad s_3 \leq q_3,s_2,q_2' 	,
									\qquad 
				 ( \rho=(2,3,1));\\  \tag{4}
 2 &\leq\frac 1{\max\{q_1,s_1'\}}+\frac 1{\max\{q_2,s_2'\}}+ \frac 1 {s_2}+\frac 1 {s_3}, \quad s_3\leq q_2',s_2,\\
\nonumber 2+\frac 1 {q_3} &\leq \frac 1{\max\{q_1,s_1'\}}+\frac 1{\max\{q_2,s_2'\}} +\frac 1 {s_1}+\frac 1 {s_2}+\frac 1 {s_3},\quad ( \rho=(1,2,3));\\  \tag{5}
  2 &\leq\frac 1{\max\{q_1,s_1'\}}+\frac 1{\max\{q_2,s_2'\}}+ \frac 1 {s_1}+\frac 1 {s_3}, \quad s_3\leq q_1',s_1,\\
\nonumber 2+\frac 1 {q_3} &\leq \frac 1{\max\{q_1,s_1'\}}+\frac 1{\max\{q_2,s_2'\}} +\frac 1 {s_1}+\frac 1 {s_2}+\frac 1 {s_3},\quad ( \rho=(1,3,2)),
\end{align}
hold. Assume that $w_0, w_1, w_2,$ and $v$ are weight functions
satisfying 
%\color{red} Do we really need this first condition? \color{black} $w_i(x, -y)=w_i(x, y)$ and 
\begin{equation*}\label{2dweight}
 v(x,t_1,t_2,\xi_1,\xi_2,\nu)^{-1}\leq  w_0 (x,\nu+\xi_1+\xi_2)^{-1}\cdot w_1(x-t_1,\xi_1) \cdot w_2(x-t_2,\xi_2).
\end{equation*}  
If $\sigma \in \mathcal{M}^{(r_0,r_1,r_2),{\kappa}; (s_1,s_2,s_3),{\rho}}$, the  bilinear
pseudodifferential operator $T_\sigma$ initially defined on $S(\Rd) \times
S(\Rd)$ by~\eqref{bpsido} extends to a bounded bilinear operator from
$M_{w_1}^{p_1,q_1} \times M_{w_2}^{p_2,q_2}$ into $M_{w_0}^{p_0,q_3}$.
Moreover, there exists a constant $C>0$, such that we have
\begin{align*}
 \|T_{\sigma}(f_1,f_2)\|_{M_{w_0}^{p_0,q_3}}\leq C \|\sigma\|_{\mathcal{M}^{(r_0,r_1,r_2),{\kappa}; (s_1,s_2,s_3),{\rho}}}\ 
\|f_1\|_{M_{w_1}^{p_1,q_1}}\ \|f_2\|_{M_{w_2}^{p_2,q_2}} % \label{eqn:mainresult}
\end{align*}
with appropriately chosen order of integration $\kappa,\rho$.
%
%\begin{enumerate}
%\item  $r_0<r_1, r_2$, and  
%$   \frac 1 { p_0} \leq  \frac 1  {r_0};$
%\item  $r_0 \geq r_1$ and  $r_0 <  r_2$ and    $  
%\frac 1 {p_0} + 1 \leq \frac 1{r_0} + \frac1{r_1} + \frac 1{\max\{p_1,r_0'\}} .$
%\item  $r_0 < r_1$ and  $r_0 \geq  r_2$ and    $  
%\frac 1 {p_0} + 1 \leq \frac 1{r_0} + \frac1{r_2} + \frac 1{\max\{p_2,r_0'\}} .$
%\item  $r_0 \geq r_1,  r_2$, and \\ $ 
%\frac 1 {p_0}+2 \leq \frac 1 {r_0} + \frac 1{r_1}+\frac 1{r_2} + \min\Big\{ \frac 1 {\max\{p_1,r_0'\}}+ \frac 1{\max\{p_2,r_0'\}}, \ 2- \frac 1 {\max\{r_1,r_2\}} -\frac 1{r_0} \Big\}. $
%% Now q
%\item $\frac 1 {q_0}>1-\frac 1 {q_1},\ 1-\frac 1 {q_2}$ and    $  
%\frac 1{ q_0}\leq \frac 1 {s_0}.$ 
%\item $\frac 1 {q_0}\leq 1-\frac 1 {q_1}$ and $ \frac 1 {q_0} > 1-\frac 1 {q_2}$  and 
%$\frac 1 {q_0}+1 \leq \frac 1{s_0} + \frac 1{\max\{s_1,q_0\}}+ \frac1{q_1}   .$
%\item $\frac 1 {q_0}> 1-\frac 1 {q_1}$ and $ \frac 1 {q_0} \leq 1-\frac 1 {q_2}$  and 
%$\frac 1 {q_0}+1 \leq \frac 1{s_0} + \frac 1{\max\{s_2,q_0\}}+ \frac1{q_2}   .$
%\item $\frac 1 {q_0} \leq 1-\frac 1 {q_1},\  1-\frac 1 {q_2}$  and\\
%$\frac 1 {q_0} +2 \leq  \frac 1 {s_0}  + \min \Big\{ \frac 1 {\max\{s_1,q_0\}}+ \frac 1{\max\{s_2,q_0\}},\ 1+\frac 1{s_0}- \frac 1 {\max\{q_1,q_2\}}  \Big\}+ \frac 1{q_1} + \frac 1{q_2}.  $
%
%\end{enumerate}
\end{theorem}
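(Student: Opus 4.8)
The plan is to derive Theorem~\ref{thm:bilinear} as the specialization of Theorem~\ref{thm:main} to the case $m=2$. Setting $\bm r=(r_1,r_2)$, $\bm s=(s_1,s_2)$, $s_{m+1}=s_3$, and $q_{m+1}=q_3$, membership $\sigma\in\mathcal M^{(r_0,r_1,r_2),\kappa;(s_1,s_2,s_3),\rho}$ is literally $\sigma\in\MV$, and the weight inequality for $v$ is the $m=2$ instance of the one in Theorem~\ref{thm:main}; so the bilinear bound and its constant follow verbatim once the index hypotheses match. It therefore suffices to show that, for each listed pair $(\kappa,\rho)$, the explicit inequalities are equivalent to the abstract hypotheses (1)--(4) of Theorem~\ref{thm:main} after an optimal choice of the auxiliary parameters $\widetilde{\bm p}\in[\bm p,\bm r']$ and $\widetilde{\bm q}$ with $\bm q,\bm s'\leq\widetilde{\bm q}$. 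Since the time conditions (1)--(2) depend only on $\kappa$, the frequency conditions (3)--(4) only on $\rho$, and the weight splits into $w_0,w_1,w_2$, I would treat the $\kappa$-enumeration and the $\rho$-enumeration separately and then combine.

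For the time part I would run through the six permutations $\kappa$ of $\{0,1,2\}$, which enter (1)--(2) only through $z=\kappa^{-1}(0)$ and the order of the remaining indices. The value $z=2$ (from $\kappa=(1,2,0)$ or $(2,1,0)$) empties the sums and collapses (1)--(2) to $1/p_0\leq1/r_0$, giving case~(1); $z=1$ (from $(2,0,1)$, $(1,0,2)$) leaves one active index and yields cases~(2)--(3); and $z=0$ (from $(0,1,2)$, $(0,2,1)$) leaves both active and yields cases~(4)--(5). Each index $k$ with $\kappa^{-1}(k)<z$ is pulled out by taking $\widetilde p_k=r_k$ and invoking $M^{p,q}\hookrightarrow M^{\widetilde p,\widetilde q}$, so it never enters the inequalities, consistent with Remark~\ref{remark:stacking}. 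The remaining auxiliary parameters are pushed to the endpoints of $[\,p_k,\,r_k'\,]$, whose nonemptiness is exactly the standing hypothesis $1/p_k+1/r_k\geq1$: condition~(1) at $k=z$ forces $\widetilde p_{\kappa(z+1)}\geq r_0'$, so the optimal choice is $\widetilde p_{\kappa(z+1)}=\max\{p_{\kappa(z+1)},r_0'\}$, producing the $\max\{p_i,r_0'\}$ terms in cases~(4)--(5) and, after splitting into the sub-regimes $p_i\gtrless r_0'$, the compact form of cases~(2)--(3). Feasibility of the endpoint ($r_0'\leq r_k'$) translates into the side conditions $r_k\leq r_0$, and the residual inequality when $\widetilde p_{\kappa(z+1)}=r_0'$ gives the side conditions $r_k\leq p_0$.

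For the frequency part I would carry out the mirror-image computation with $\rho$ on $\{1,2,3\}$ and $w=\rho^{-1}(3)$, using the change of variable $k\mapsto m+1-k$ from the remark following Lemma~\ref{lemma:Young2}; this recasts (3)--(4) in the shape of (1)--(2) with $q,s$ in place of $r,p$ and all inequalities reversed, so that $\widetilde{\bm q}$ is pushed down to $\max\{q_i,s_i'\}$. The permutations collapse identically ($w=1$ giving the single condition $1/q_3\leq1/s_3$, and so on), and the endpoint analysis reproduces the $\max\{q_i,s_i'\}$ terms together with the side conditions $s_3\leq q_i'$ and $s_i\leq q_3$.

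The main obstacle is not conceptual but the bookkeeping of this endpoint optimization: in each of the $5\times5$ cases one must verify that the displayed inequality \emph{together with} its stated side conditions is genuinely equivalent to the existence of admissible $\widetilde{\bm p},\widetilde{\bm q}$. This forces a split into sub-regimes according to whether $p_i\leq r_0'$ (respectively $s_i\geq q_3'$), checking in each sub-regime that one of the two stated conditions is automatic given the other. This is illustrated by case~(2) of the time part, $\kappa=(2,0,1)$: when $p_1\geq r_0'$ the principal inequality itself forces $r_1\leq p_0$, whereas when $p_1<r_0'$ the principal inequality becomes automatic once $r_1\leq p_0$ is assumed, so that requiring both is exactly equivalent to feasibility.
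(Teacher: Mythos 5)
Your overall route coincides with the paper's proof: specialize Theorem~\ref{thm:main} to $m=2$, enumerate the permutations through $z=\kappa^{-1}(0)$ and $w=\rho^{-1}(3)$, and reduce each case to the feasibility of auxiliary exponents $\widetilde{\bm p}\in[\bm p,\bm r']$, $\widetilde{\bm q}\geq \bm q,\bm s'$. Your treatment of the collapsed cases ($z=2$, $w=1$), of the one-active-parameter cases ($z=1$, $w=2$, where the interval analysis and the sub-regime split you carry out for $\kappa=(2,0,1)$ are exactly the paper's argument), and of the frequency case $w=3$ (where all substantive constraints are lower bounds, so the choice $\widetilde q_i=\max\{q_i,s_i'\}$ is genuinely optimal, as in the paper) is correct. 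One slip: the indices pulled out with $\kappa^{-1}(k)<z$ must be given $\widetilde p_k=r_k'$, not $r_k$ --- the upper endpoint of $[p_k,r_k']$, which is where the standing hypothesis $1/p_k+1/r_k\geq 1$ enters --- corresponding to (A0) of Lemma~\ref{lemma:Young1b} after dualization.

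The genuine gap is in the time cases $\kappa=(0,1,2)$ and $(0,2,1)$, where your endpoint-optimization scheme fails as stated. With $z=0$, hypothesis (1) of Theorem~\ref{thm:main} at $k=1$ is an \emph{upper} bound coupling both parameters, $\frac 1{r_0}+\frac 1{\widetilde p_2}+\frac 1{\widetilde p_1}+\frac 1{r_1}\leq 2$, while hypothesis (2) bounds essentially the same sum from below; hence pushing both $\widetilde p_i$ up can violate (1). Concretely, for $\kappa=(0,1,2)$ take $(r_0,r_1,r_2)=(2,1,2)$ and $(p_0,p_1,p_2)=(2,1,2)$: all conditions of case (4) hold (the main inequality reads $\tfrac 52\leq 3$), but the endpoint choice $\widetilde p_1=\max\{p_1,r_0'\}=2$, $\widetilde p_2=p_2=2$ gives $\frac 12+\frac 12+\frac 12+1=\frac 52>2$ and admits no feasible completion (since $\widetilde p_2$ is forced to equal $2$), whereas $\widetilde p_1=\infty$, $\widetilde p_2=2$ is admissible. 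The paper resolves this two-dimensionally: in the $(1/\widetilde p_1,1/\widetilde p_2)$-plane the feasible set is the rectangle $[1-\frac 1{r_1},\min\{\frac 1{p_1},1-\frac 1{r_0}\}]\times[1-\frac1{r_2},\frac 1{p_2}]$ intersected with a diagonal strip, and nonemptiness is equivalent to two corner conditions --- the upper-right corner lying above the lower diagonal, which is precisely the stated inequality containing $1/\max\{p_1,r_0'\}$, and the lower-left corner lying below the upper diagonal, which is the side condition $r_2\leq r_0$. Your sub-regime recipe (``one stated condition becomes automatic given the other''), which does settle the interval cases (2)--(3), does not by itself deliver this two-parameter equivalence; for cases (4)--(5) you need the corner/strip feasibility argument (or an equivalent explicit one) in place of endpoint evaluation.
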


\begin{proof}
 This  result is derived from Theorem~\ref{thm:main}  for $m=2$, namely, we establish conditions on the $p_0,p_1,p_2,r_0, r_1,r_2,q_1,q_2,q_3,s_1,s_2,s_3$ for the existence of $\widetilde p_1,\widetilde p_2,\widetilde q_1,\widetilde q_2 $ satisfying the conditions of Theorem~\ref{thm:main}.
 
 If $\kappa=(1\ 2\ 0)$ or $\kappa=(2\ 1\ 0)$, then $z=2$ in Theorem~\ref{thm:main} and we require in addition only  $\frac 1 {r_0}\geq \frac 1 {p_0}$.
 
  For the remaining cases, we have to show that the conditions above imply the existence of $\widetilde p_1\geq p_1$,  $\widetilde p_2\geq p_2$ which allow for the application of Theorem~\ref{thm:main}.  
 
 If $\kappa=(1\ 0\ 2)$, we have $z=1$,  and  we seek, with notation as before, $\widetilde P_1$ and  $\widetilde P_2$ with
  \begin{align*}
 \widetilde P_1 &\leq P_1;\qquad  &\widetilde P_2 &\leq P_2;\\
 \widetilde P_1 +R_1&\geq 1;\qquad  &\widetilde P_2 +R_2&\geq 1;\\
  R_0+\widetilde P_2 &\leq 1;\\
 R_0+ \widetilde P_2 + R_2  &\geq 1+P_0;&
\end{align*}
that is,
\begin{align} \notag %\label{1strip}
1-R_1&\leq \widetilde P_1 \leq P_1;\\ 
\label{1strip} 1-R_0- R_2 +P_0,\  1-R_2&\leq \widetilde P_2 \leq P_2,\ 1-R_0;
\end{align}
which defines a non empty set if and only if $P_1+R_1\geq 1$, $P_2+R_2\geq 1$, $R_2\geq P_0, R_0$, $1+P_0\leq R_0+R_1+P_2$.

For $\kappa=(0\ 1\ 2)$ we have  $z=0$ in Theorem~\ref{thm:main} and we require that some $\widetilde P_1$ and  $\widetilde P_2$ satisfy
\begin{align*}
 \widetilde P_1 &\leq P_1;\qquad  &\widetilde P_2 &\leq P_2;\\
 \widetilde P_1 +R_1&\geq 1;\qquad  &\widetilde P_2 +R_2&\geq 1;\\
  R_0+\widetilde P_2 &\leq 1;\qquad  &R_0+ \widetilde P_2 + \widetilde P_1 +R_1&\leq 2;\\
 R_0+ \widetilde P_2 + R_2 + \widetilde P_1 +R_1&\geq 2+P_0;&&
\end{align*}
that is,

\begin{align} \label{verticalstrip}
1-R_1&\leq \widetilde P_1 \leq P_1,\, 1-R_0;\\
\label{horizontalstrip}1-R_2&\leq \widetilde P_2 \leq P_2;\\
\label{diagonalstrip}2+P_0- R_0-R_1-R_2 &\leq \widetilde P_1 + \widetilde P_2 \leq 2- R_0-R_1.
\end{align}

Note that \eqref{verticalstrip} defines a vertical strip in the $(\widetilde P_1,\widetilde P_2 )$ plane which is non-empty if and only if $P_1+R_1\geq 1$ and $R_0\leq R_1$. Similarly, \eqref{horizontalstrip} defines a horizontal strip which is not empty if we assume $P_2+R_2\geq 1$.  Lastly, the diagonal strip given by \eqref{diagonalstrip} is nonempty if and only if $P_0\leq R_2$.  

To obtain a boundedness result, we still need to establish that the diagonal strip meats the rectangle given by the intersection of horizontal and vertical strips.  This is the case if the  upper right hand corner of the rectangle is above the lower diagonal given by $ \widetilde P_1 + \widetilde P_2=2+P_0- R_0-R_1-R_2$, that is, if 
\begin{align*}
 \min\{P_1,\, 1-R_0\}+P_2 \geq 2+P_0- R_0-R_1-R_2,
\end{align*}
and if the lower left corner of the rectangle lies below the upper diagonal, that is, if 
\begin{align*}
1-R_1+1-R_2 \leq 2- R_0-R_1,
\end{align*}
which holds if $R_0\leq R_2$.

Let us now turn to the frequency side. 
If $\rho=(3\ 2\ 1)$ or $\rho=(3\ 1\ 2)$, we have $w=1$ and  an application Theorem~\ref{thm:main} requires the single but strong assumption $Q_3\leq S_3$.

For $\rho=(1\ 3\ 2)$ we have  $w=2$ in Theorem~\ref{thm:main}. To satifiy the conditions, we need to establish the existence of   $\widetilde Q_1$ and  $\widetilde Q_2$ satisfy
\begin{align*}
 \widetilde Q_1 &\leq Q_1;\qquad  &\widetilde Q_2 &\leq Q_2;\\
 \widetilde Q_1 +S_1&\leq 1;\qquad  &\widetilde Q_2 +S_2&\leq 1;\\
  S_3+\widetilde Q_1 &\geq 1;\qquad  &S_3+ \widetilde Q_1 +S_1&\geq 1+Q_3.
  \end{align*}
The existence of such $\widetilde Q_2$ is trivial, so we are left with 
\begin{align*} 
  1+Q_3-S_1-S_3,\ 1-S_3 \leq \widetilde Q_1 &\leq Q_1,\ 1-S_1,.
\end{align*}
Note that this inequality is exactly \eqref{1strip} with $S_3$ replacing $R_2$, $S_1$ replacing $R_0$ $Q_3$ replacing $P_0$, and $Q_1$, $\widetilde Q_1$ in place of $P_2$, $\widetilde P_2$.  

We conclude that for the existence of $\widetilde Q_1$, we require $S_3\geq S_1,Q_3,1-Q_1$, and $1+Q_3\leq Q_1+S_1+S_3$.

For $\rho=(1\ 2\ 3)$ we have  $w=3$ in Theorem~\ref{thm:main}. We need to establish the existence of   $\widetilde Q_1$ and  $\widetilde Q_2$ satisfy
\begin{align*}
 \widetilde Q_1 &\leq Q_1;\qquad  &\widetilde Q_2 &\leq Q_2;\\
 \widetilde Q_1 +S_1&\leq 1;\qquad  &\widetilde Q_2 +S_2&\leq 1;\\
  S_3+\widetilde Q_2 &\geq 1;\qquad  &S_3+ \widetilde Q_2 + \widetilde Q_1 +S_2&\geq 2;\\
 S_3+ \widetilde Q_1 + \widetilde Q_2  +S_2 +S_1&\geq 2+Q_3;&&
\end{align*}
that is, choosing 
\begin{align*}% \label{verticalstripfrequ}
  \widetilde Q_1 =\min\{ Q_1,\, 1-S_1\},\quad \text{and}\quad  \widetilde Q_2 =\min\{ Q_2,\, 1-S_2\},
  \end{align*}
   we require 
\begin{align*} 1 &\leq \min\{ Q_2,\, 1-S_2\}+S_3;\\
 2 &\leq \min\{ Q_1,\, 1-S_1\}+\min\{ Q_2,\, 1-S_2\}+ S_2+S_3; \\
 2+Q_3&\leq \min\{ Q_1,\, 1-S_1\}+\min\{ Q_2,\, 1-S_2\} +S_1+S_2+S_3.\qedhere
\end{align*}
\end{proof}
 
\begin{proof}{\bf Proof of Theoremm~\ref{model-result}} 
  Theorem~\ref{model-result}  now follows from choosing $\kappa$ and $\rho$ to be the identity permutations, and $r_0=s_1=s_2=\infty$, $r_1=r_2=s_3=1$.
  \end{proof} Note that  this result covers  and extends Theorem 3.1 in \cite{BGHO}.
% we have to be more specific \color{black}.
\begin{remark} \rm
Using Remark~\ref{remark:Minkowski}, we observe that $M^{\infty,1}(\RR^{3d}) \subsetneq \mathcal{M}^{(\infty, 1,1), (\infty, \infty, 1)}(\RR^{3d})$. Indeed, in both cases we have the same decay parameters, but different integration orders, namely
\begin{align*}
\begin{array}{lllllll}
 M^{\infty,1} &    x \to \infty, & \xi_1 \to \infty, & \xi_2 \to \infty,&  \nu \to 1, & t_1 \to 1,  &  t_2 \to 1 ;    \\
 \mathcal{M}^{(\infty, 1,1), (\infty, \infty, 1) }&  x \to \infty,  & t_1 \to 1,  &  t_2 \to 1,& \xi_1 \to \infty, & \xi_2 \to \infty, &  \nu \to 1.
\end{array}
\end{align*}
Inclusion follows from the fact that we always moved a large exponent to the right of a small exponent.  Note that for any $r\in M^{1,\infty}(\mathbb R)\setminus M^{\infty,1}(\mathbb R)$, for example, a chirped signal $r(\xi)=e^{2\pi i \xi^2} u(\xi)$ with $u(\xi)\in L^2 \setminus L^1$, we have $$\sigma(x,\xi_1,\xi_2)  =r(\xi_1)\in \mathcal{M}^{(\infty, 1,1), (\infty, \infty, 1) }\setminus M^{\infty,1}.$$ 
\end{remark}

\begin{example} \rm 
With $\kappa$ and $\rho$ are the identity, that is, $\kappa=(0,1,2)$ and $\rho=(1,2,3)$, we illustrate the applicability of Theorem~\ref{thm:bilinear} for maps on $L^2\times L^2=M^{2,2}\times M^{2,2}$, that is,  $p_1=p_2=q_1=q_2=2$. 

On the time side, we require $r_1, r_2\leq 2,r_0$ and $r_2\leq p_0$ as well as
$$
\frac 3 2+ \frac 1 {p_0}\leq \frac 1 {r_0} + \frac 1{r_1}+\frac 1{r_2} +\frac 1{\max\{2, r_0'\}}.
$$
Our goal is to obtain results for $r_0$ large, hence, we assume $r_0\geq 2$. (In case of $r_0\leq2$,  the last inequality above does not depend on $r_0$, and we can improve the result by fixing $r_0=2$.)
We obtain the range of applicability $r_1, r_2\leq 2\leq r_0$, and $r_2\leq p_0$, and 
$$
1+ \frac 1 {p_0}\leq \frac 1 {r_0} + \frac 1{r_1}+\frac 1{r_2}.
$$

On the frequency side, we have to satisfy the  conditions $s_3\leq 2, s_2$, 
\begin{align*}
 2 &\leq\frac 1{\max\{2,s_1'\}}+\frac 1{\max\{2,s_2'\}}+ \frac 1 {s_2}+\frac 1 {s_3},\\
 2+\frac 1 {q_3} &\leq \frac 1{\max\{2,s_1'\}}+\frac 1{\max\{2,s_2'\}} +\frac 1 {s_1}+\frac 1 {s_2}+\frac 1 {s_3}.
\end{align*}
Let us assume $s_1\leq 2\leq s_2$, then we have the range of applicability $s_1,s_3\leq 2\leq s_2$, 
\begin{align*}
 \frac 1 2 + \frac 1{s_1},  \frac 1 2 +\frac 1 {q_3} &\leq \frac 1 {s_2}+\frac 1 {s_3}.
\end{align*}

The range of applicability gives exponents that guarantee that a bilinear pseudodifferential operator maps boundedly $L^2 \times L^2$ into $M^{p_0, q_3}$ if 
 $\sigma \in \mathcal{M}^{(r_0,r_1,r_2), (s_1,s_2,s_3)}$.
 
In particular, when $\sigma \in \mathcal{M}^{(\infty, 1,1), (2,2, 1)}$ we can take  $p_0=q_3=1$.  So we get that $T_{\sigma}$ maps $L^2\times L^2$ into $M^{1,1}\subset M^{1, \infty}$. 
\end{example}
%See also Theorem~\ref{model-result}

 \subsection{The bilinear Hilbert transform}\label{sub4.2}
 We now consider boundedness properties of the bilinear Hilbert transform on  modulation spaces. Recall that this operator is defined  for $f, g \in \S(\Rr)$ by 
 
 \begin{equation*}%\label{bht}
 \bh(f, g)(x)=\lim_{\epsilon \to 0}\int_{|y|>\epsilon}f(x+y)g(x-y)\frac{dy}{y}. 
 \end{equation*}
 Equivalently, this operator can be written as a Fourier multiplier, that is, 
 a bilinear pseudodifferential operator whose symbol is
 independent of the space variable, with symbol $\sigma_{\bh}(x,\xi_1,\xi_2)=\sigma(\xi_1-\xi_2)$, where 
 $\sigma(x)=-\pi i {\rm sign}\,(x), x\neq 0$. 
 
 %In particular,
% $$\bh(f, g) (x) = \iint_{\Rr\times \Rr}\sigma_{\bh}(x,\xi_1,\xi_2)\hat{f}(\xi_1)\hat{g}(\xi_2)\, d\xi_1\, d\xi_2.$$
 
 Our first goal is to identify which of the (unweighted) spaces $\mathcal{M}^{(r_0,r_1,r_2),{\kappa}; (s_1,s_2,s_0),{\rho}}$ the symbol $\sigma_\bh$ belongs to.  To this end consider the window function  $\Psi(x,\xi_1,\xi_2)=\psi(x)\psi(\xi_2)\psi(\xi_1-\xi_2)$, where $\psi \in \S(\Rr)$ such that $\psi(x)=\psi_1(x)-\psi_1(-x)$ with $\psi_1 \in \S(\Rr)$, $0\leq \psi_1(x)\leq 1$ for all $x\in \Rr$. In addition, we require that the support of $\psi_1$ is strictly included in $(0, 1).$ 
Then
\begin{align*}
 \mathcal V_\Psi\sigma_\bh (x,t_1,t_2,\xi_1,\xi_2,\nu)&=
 	V_\psi 1 (x,\nu) \ V_\psi\sigma (\xi_1-\xi_2,t_1) \  V_\psi 1 (\xi_2,t_1+t_2) 
\end{align*}

Assume that the two permutations $\kappa$ of  $\{0,1,2\}$, and $\rho$ of $\{1,2,3\}$ are
identities. Moreover, suppose that all the weights are identically
equal to $1$.
\begin{proposition}\label{symb-bht}
For  $r>1$, we have that 
%
%$V_\psi \sigma (\xi,t)\in L^{r} (t) L^\infty(\xi)$, that is
% $$\sup_{\xi \in \Rr}\int_{\Rr}|V_{\psi}\sigma(\xi, t)|^rdt < \infty.$$
% Consequently, 
$\sigma_\bh \in \mathcal{M}^{(\infty, 1, r), (\infty,  \infty, 1)}$.  \end{proposition}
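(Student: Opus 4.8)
The plan is to substitute the factorized expression for $\mathcal V_\Psi\sigma_\bh$ recorded just above the proposition directly into the definition of the $\mathcal M^{(\infty,1,r),(\infty,\infty,1)}$-norm, and then to peel off the six iterated operations one variable at a time. Since any two Schwartz windows yield equivalent modulation norms, it suffices to work with the (compactly supported, hence Schwartz) window $\Psi$. Writing $h(s)=|\widehat\psi(s)|$ and $B(u,t)=|V_\psi\sigma(u,t)|$, the first step is to note that the two factors $V_\psi 1$ are essentially trivial: a direct computation gives $V_\psi 1(a,b)=e^{-2\pi i ab}\widehat\psi(b)$ for all $a,b$, so that $|V_\psi 1(x,\nu)|=h(\nu)$ and $|V_\psi 1(\xi_2,t_1+t_2)|=h(t_1+t_2)$ carry no dependence on their translation argument. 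Consequently
\[
  |\mathcal V_\Psi\sigma_\bh(x,t_1,t_2,\xi_1,\xi_2,\nu)|
  =h(\nu)\,B(\xi_1-\xi_2,t_1)\,h(t_1+t_2),
\]
which in particular is independent of $x$, so that the innermost operation $\sup_x$ (coming from $r_0=\infty$) acts as the identity.

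With $\kappa,\rho$ the identity permutations the norm then unwinds as
\[
  \|\sigma_\bh\|_{\mathcal M^{(\infty,1,r),(\infty,\infty,1)}}
  =\int \sup_{\xi_2}\sup_{\xi_1}\Big(\int\Big(\int h(\nu)\,B(\xi_1-\xi_2,t_1)\,h(t_1+t_2)\,dt_1\Big)^{r}\,dt_2\Big)^{1/r}d\nu .
\]
For fixed $u=\xi_1-\xi_2$ the inner $t_1$-integral is, up to a reflection, the convolution of $B(u,\cdot)$ with $h$ evaluated at $t_2$; Young's inequality therefore bounds its $L^r(t_2)$-norm by $\|B(u,\cdot)\|_{L^r}\,\|h\|_{L^1}$. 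As the resulting quantity depends on $\xi_1,\xi_2$ only through $u$, the two suprema collapse into $\sup_u\|B(u,\cdot)\|_{L^r}$, while the outermost integral contributes the finite factor $\int h(\nu)\,d\nu=\|\widehat\psi\|_{L^1}$ (finite since $\psi$, and hence $\widehat\psi$, is Schwartz). The whole estimate thus reduces to showing finiteness of the single quantity
\[
  M_r:=\sup_{u\in\Rr}\|V_\psi\sigma(u,\cdot)\|_{L^r(\Rr)} .
\]

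The heart of the matter --- and the only place where the hypothesis $r>1$ is used --- is the bound $M_r<\infty$, which I would obtain from the jump of $\sigma=-\pi i\,{\rm sign}$ at the origin. Setting $g_u(y):={\rm sign}(y)\,\psi(y-u)$, distributional differentiation gives $g_u'=2\psi(-u)\,\delta_0+{\rm sign}(\cdot)\,\psi'(\cdot-u)$, and taking Fourier transforms turns this into $2\pi i\,t\,\widehat{g_u}(t)=2\psi(-u)+\int {\rm sign}(y)\,\psi'(y-u)\,e^{-2\pi i y t}\,dy$. The right-hand side is bounded in absolute value by $2\|\psi\|_\infty+\|\psi'\|_{L^1}$ uniformly in $u$ and $t$, whereas the trivial estimate gives $|\widehat{g_u}(t)|\le\|\psi\|_{L^1}$; combining the two yields the uniform pointwise decay $|V_\psi\sigma(u,t)|=\pi|\widehat{g_u}(t)|\le C\,(1+|t|)^{-1}$. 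Since $(1+|t|)^{-1}\in L^r(\Rr)$ exactly when $r>1$, this gives $M_r\le C\,\|(1+|t|)^{-1}\|_{L^r}<\infty$ and finishes the argument. I expect this uniform-in-$u$ decay estimate to be the main obstacle: it is precisely the $1/t$ tail generated by the discontinuity that both forces and permits the strict inequality $r>1$, since for $r=1$ that tail fails to be integrable.
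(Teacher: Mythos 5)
Your proof is correct, and its outer layers coincide with the paper's own: both substitute the factorization $\mathcal V_\Psi\sigma_\bh(x,t_1,t_2,\xi_1,\xi_2,\nu)=V_\psi 1(x,\nu)\,V_\psi\sigma(\xi_1-\xi_2,t_1)\,V_\psi 1(\xi_2,t_1+t_2)$ into the norm, use $|V_\psi 1(a,b)|=|\widehat\psi(b)|$ to dispose of the $x$-supremum and the $\nu$-integral, apply Young's inequality to the correlation in $(t_1,t_2)$, and reduce everything to the single quantity $\sup_{u}\|V_\psi\sigma(u,\cdot)\|_{L^r}$. Where you genuinely diverge is in proving that this supremum is finite. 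The paper computes $V_\psi\sigma(\xi,t)$ explicitly, exploiting the antisymmetric structure $\psi=\psi_1-\psi_1(-\cdot)$ with $\mathrm{supp}\,\psi_1\subset(0,1)$, and runs a three-case analysis ($|\xi|\geq 1$, $-1\leq\xi\leq 0$, $0\leq\xi\leq 1$) in which terms of the form $\widehat{\chi_{[a,b]}}\ast\widehat{\psi_1}$ are controlled uniformly via $\widehat{\chi_{[a,b]}}\in L^r$ for $r>1$. You instead derive the uniform pointwise decay $|V_\psi\sigma(u,t)|\leq C(1+|t|)^{-1}$ from the distributional derivative of $\mathrm{sign}(\cdot)\,\psi(\cdot-u)$, whose $\delta$-term encodes the jump of the symbol; this is cleaner, needs no antisymmetry or support condition on the window (any real-valued Schwartz $\psi$ works), and isolates exactly where $r>1$ enters. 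What the paper's explicit computation buys in exchange is finer structural information about $V_\psi\sigma$: it is the natural starting point for the complementary claim, asserted in the introduction, that $\sigma_\bh\notin\mathcal M^{(\infty,1,1),(\infty,\infty,1)}$, whereas your one-sided bound $C(1+|t|)^{-1}$ proves sufficiency only --- your closing remark that the $1/t$ tail \emph{forces} $r>1$ is a heuristic, not established by an upper bound. Two cosmetic points: ``compactly supported, hence Schwartz'' should read ``smooth and compactly supported, hence Schwartz''; and the appeal to window-independence of the norm, while true (it follows from the standard change-of-window convolution inequality together with Young's inequality in the permuted mixed-norm spaces $L^{\bm r;\rho}$, which are solid and translation invariant), is not actually needed here, since exhibiting one admissible window with finite norm is exactly what the statement requires and is all the paper itself does.
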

 
\begin{proof}
Let $r>1$. We shall integrate $$\mathcal V_{\Psi}\sigma_\bh(x, \bm t, \bm \xi, \nu)= 
 	V_\psi 1 (x,\nu) \ V_\psi\sigma (\xi_1-\xi_2,t_1) \  V_\psi 1 (\xi_2,t_1+t_2) $$ in the  order
\begin{align*}
\begin{array}{llllll}
  x \to  r_0=\infty
 & t_1 \to  r_1=1
 & t_2 \to   r_2=r>1
 & \xi_1 \to   s_1= \infty
 & \xi_2 \to   s_2=\infty
 & \nu \to   s_0=1. 
\end{array}
\end{align*}
We  estimate 
\begin{align*}
\|\sigma_\bh\|_{\mathcal{M}^{(\infty, \infty, 1), (\infty, 1, r)}} &= \int_{\Rr}\sup_{\xi_1, \xi_2}\bigg(\int_{\Rr}\bigg(\int_{\Rr}\sup_{x}|\mathcal V_{\Psi}\sigma_\bh(x, \bm t, \bm \xi, \nu)|dt_1\bigg)^{r} dt_2\bigg)^{1/r}d\nu\\
&= \int_{\Rr}\sup_{\xi_1, \xi_2}\bigg(\int_{\Rr}\bigg(\int_{\Rr}\sup_{x} | V_\psi 1 (x,\nu) \ V_\psi\sigma (\xi_1-\xi_2,t_1) \  V_\psi 1 (\xi_2,t_1+t_2) |dt_1\bigg)^{r} dt_2\bigg)^{1/r}d\nu       \\
&= \|\hat{\psi}\|_{L^1} \sup_{\xi_1, \xi_2}\bigg(\int_{\Rr}\bigg(\int_{\Rr} |V_\psi\sigma (\xi_1-\xi_2,t_1) \  V_\psi 1 (\xi_2,t_1+t_2) |dt_1\bigg)^{r} dt_2\bigg)^{1/r}\\
& \leq \|\hat{\psi}\|_{L^1} \sup_{\xi_1, \xi_2} \| |V_\psi\sigma(\xi_1-\xi_2, \cdot)|\ast|V_\psi1(\xi_2, \cdot)|\|_{L^{r}}\\
&\leq  \|\hat{\psi}\|_{L^1} \sup_{\xi_1, \xi_2} \|V_\psi\sigma(\xi_1-\xi_2, \cdot)\|_{L^{r}}\|V_\psi1(\xi_2, \cdot)\|_{L^1}\\
&=  \|\hat{\psi}\|_{L^1}^2 \sup_{\xi_1} \|V_\psi\sigma(\xi_1 \cdot)\|_{L^{r}},
\end{align*}
where we have repeatedly used the fact that  $V_{\psi}1(x,\nu)=e^{2\pi i x \nu}\hat{\psi}(\nu)$, and
$V_{\psi}1\in L^{\infty}(x)L^{1}(\nu),$ that is
$$\int_{\Rr}\sup_{x}|V_{\psi}1(x, \nu)|d\nu=\|\hat{\psi}\|_{L^1}<\infty.$$ 
Thus, we are left
to estimate $$\sup_{\xi} \|V_\psi\sigma(\xi \cdot)\|_{L^{r}}.$$

%$$\bigg(\int_{\Rr}\|\sup_{\nu, t_1}|V_{\psi}\sigma(\nu+t_1, \xi_1)V_{\psi}1(-t_1, \xi_1 + \xi_2)|\|_{L^{1}(\xi_1)}^rd\xi_2\bigg)^{1/r}=\|\sup_{\nu, t_1}|V_{\psi}\sigma(\nu+t_1, \xi_1)V_{\psi}1(-t_1, \xi_1 + \xi_2)|\|_{L^{1}(\xi_1)}\|_{L^{r}(\xi_2)}.$$

%But,
%\begin{align}\label{estsignfunction}
%&\|\sup_{\nu, t_1}|V_{\psi}\sigma(\nu+t_1, \xi_1)V_{\psi}1(-t_1, \xi_1 + \xi_2)|\|_{L^{1}(\xi_1)}\|_{L^{r}(\xi_2)} \notag \\
%& \leq  \big{\|}\sup_{\nu, t_1}\widetilde{|V_{\psi}\sigma(\nu+t_1, \cdot)|}\ast \widetilde{|V_{\psi}1(-t_1, \cdot)|}\big{\|}_{L^{r}(\xi_2)}.
%\end{align}
%Note that $\widetilde{|V_{\psi}1(-t_1, \xi_2)|}=|\hat{\psi}(-u)| \in L^{\infty}_{t_1} L^{1}_{\xi_2}$. 
%
 
Recall that $\psi(x)=\psi_1(x)-\psi_1(-x)$, hence, we have 
 $$V_{\psi}\sigma(\xi, t)=e^{-2\pi i \xi
   t}\bigg[-\int_{-\infty}^{-\xi}e^{-2\pi iy t}\psi(y)dy+
 \int_{-\xi}^{\infty}e^{-2\pi it y}\psi(y)dy\bigg].$$
A series of straightforward calculations yields 
 \begin{align*}
  |V_{\psi} \sigma (\xi,t)|= 
\begin{cases}
|\hat{\psi_1}(t) - \hat{\psi_1}(-t)| &\mbox{if } |\xi|\geq 1 \\
	|\hat{\psi_1}(-t) - \widehat{\chi_{[0, -\xi]}}\ast \hat{\psi_1}(t) + \widehat{\chi_{[-\xi, 1]}}\ast \hat{\psi_1}(t)|&\mbox{if } -1\leq \xi \leq 0 \\
	|\hat{\psi_1}(t) - \widehat{\chi_{[\xi, 1]}}\ast \hat{\psi_1}(-t) + \widehat{\chi_{[0, \xi]}}\ast \hat{\psi_1}(-t)|&\mbox{if } 0\leq \xi \leq 1, 
\end{cases}
\end{align*}
where $\chi_{[a, b]}$ denotes the characteristic function of $[a, b]$. 
We note that  that $\widehat{\chi_{[0, -\xi]}}, \widehat{\chi_{[-\xi, 1]}}, \widehat{\chi_{[\xi, 1]}}, \widehat{\chi_{[0, \xi]}} \in L^r$ uniformly for $|\xi|\leq 1$ for each $r>1$. 

For $|\xi|\geq 1$, we have $$\|V_\psi\sigma(\xi, \cdot)\|_{L^q}\leq 2\|\hat{\psi_{1}}\|_{L^q}$$ for any $q\geq 1$. 
Now consider $-1\leq \xi \leq 0$, then 
\begin{align*}
\|V_\psi\sigma(\xi, \cdot)\|_{L^r}& \leq  \|\hat{\psi_1}\|_{L^r}+ \|\widehat{\chi_{[0, -\xi]}}\ast \hat{\psi_1}\|_{L^r} + \|\widehat{\chi_{[-\xi, 1]}}\ast \hat{\psi_1}\|_{L^r}\\
&\leq \|\hat{\psi_1}\|_{L^r} + \|\hat{\psi_1}\|_{L^1}(\|\widehat{\chi_{[0, -\xi]}}\|_{L^r} + \|\widehat{\chi_{[-\xi, 1]}}\|_{L^r})\\
& \leq  \|\hat{\psi_1}\|_{L^r} + C \|\hat{\psi_1}\|_{L^1}
\end{align*}
where $C>0$ is a constant that depends only on $r$. Using a similar estimate for $0\leq \xi \leq 1$, we conclude that $$\sup_{\xi}\|V_\psi\sigma(\xi, \cdot)\|_{L^r} \leq C< \infty$$ where $C$ depends only on $\psi_1$ and $r$. 
%
%
%Thus, we conclude that $$\sup_{\xi}|V_{\psi}\sigma (\xi, t)| \in L^{r}(t)$$ for each $r>1$.  
%~\eqref{estsignfunction} now becomes 
%\begin{align*}
%&\|\sup_{\nu, t_1}|V_{\psi}\sigma(\nu+t_1, \xi_1)V_{\psi}1(-t_1, \xi_1 + \xi_2)|\|_{L^{1}(\xi_1)}\|_{L^{r}(\xi_2)} \notag \\
%& \leq  \big{\|}\sup_{\nu}\widetilde{|V_{\psi}\sigma(\nu, \cdot)|}\ast |\widetilde{\hat{\psi}}|\big{\|}_{L^{r}(\xi_2)}\\
%& \leq \|\sup_{\nu}\sigma(\nu, \xi_2)\|_{L^{r}(\xi_2)} \|\hat{\psi}\|_{L^1} 
%\end{align*}
%and this completes the proof since  $\psi_1 \in \S(\Rr).$
\end{proof}
 
Observe that $\sigma_\bh \in \mathcal{M}^{(\infty, 1, r), (\infty,  \infty, 1)} (\RR^3)\setminus \mathcal{M}^{(\infty, 1, 1), (\infty,  \infty, 1)}(\RR^3)$ for all $r>1$. Consequently, to obtain a boundedness result for the bilinear Hilbert transform, we cannot apply any of the existing results on bilinear pseudodifferential operators. However, using the  symbol classes  introduced we obtain the following result: 
% nor Corollary~\ref{model-result}. However, using the new symbol classes we introduced we can prove the following result: 

\begin{theorem}\label{bdbht}
Let  $1\leq  p_0, p_1, p_2, q_1, q_2,q_3\leq \infty $ satisfy $\frac{1}{p_1}+\frac{1}{p_2} > \frac{1}{p_0}$  and that $\tfrac{1}{q_1}+\tfrac{1}{q_2}\geq 1+\tfrac{1}{q_3}$. Then the bilinear Hilbert transform
  extends to a bounded bilinear operator from $M^{p_1, q_1} \times
  M^{p_2, q_2}$ into $M^{p_0,q_3}$. Moreover, there exists a constant $C>0$
  such that 
$$ \|\bh(f, g)\|_{M^{p_0, q_3}}\leq C \|f\|_{M^{p_1,q_1}} \|g\|_{M^{p_2,q_2}}.$$ 

In particular, for any $1\leq p, q \leq \infty,$ and  $\epsilon>0$, the $\bh$ continuously maps $M^{p,q}\times M^{p', q'}$ into $M^{1+\epsilon,\infty}$ and we have 
$$ \|\bh(f, g)\|_{M^{1+\epsilon, \infty}}\leq C   \|f\|_{M^{p,q}} \|g\|_{M^{p',q'}}.$$
% The choice $p=q=2$ gives a boundedness result on $L^2\times L^2$.
\end{theorem}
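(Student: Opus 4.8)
The plan is to read off the conclusion from the symbol membership of Proposition~\ref{symb-bht} together with the boundedness criterion of Theorem~\ref{thm:bilinear}. By Proposition~\ref{symb-bht}, for every $r>1$ we have $\sigma_\bh\in\mathcal{M}^{(\infty,1,r),(\infty,\infty,1)}$ with $\kappa,\rho$ the identity permutations; equivalently, the usable time parameters are $r_0=\infty,\ r_1=1,\ r_2=r$ and the usable frequency parameters are $s_1=s_2=\infty,\ s_3=1$. I would feed precisely these into Theorem~\ref{thm:bilinear} using $\kappa=(0,1,2)$ and $\rho=(1,2,3)$, and verify that case~(4) on both the time and the frequency side reduces to the two standing hypotheses.

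On the frequency side this is immediate. Substituting $s_1=s_2=\infty$ (so $s_1'=s_2'=1$, hence $\max\{q_i,s_i'\}=q_i$) and $s_3=1$ into case~(4) for $\rho$, the second inequality becomes exactly $\tfrac1{q_1}+\tfrac1{q_2}\ge 1+\tfrac1{q_3}$, the first becomes the weaker $\tfrac1{q_1}+\tfrac1{q_2}\ge 1$, and the side conditions $s_3\le q_2',s_2$ hold trivially. On the time side, substituting $r_0=\infty$ (so $r_0'=1$ and $\max\{p_1,r_0'\}=p_1$), $r_1=1$, $r_2=r$ into case~(4) for $\kappa$, the main inequality becomes $\tfrac1{p_1}+\tfrac1{p_2}\ge 1+\tfrac1{p_0}-\tfrac1r$, while the side conditions are $r\le p_0$ and, from the preliminary requirement $\tfrac1{p_2}+\tfrac1{r_2}\ge 1$, also $r\le p_2'$. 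Writing $\delta:=\tfrac1{p_1}+\tfrac1{p_2}-\tfrac1{p_0}>0$, the main inequality is just $\tfrac1r\ge 1-\delta$, so any $r$ with $1<r\le\min\{p_0,p_2'\}$ taken small enough serves; such an $r$ exists precisely because $\delta>0$ opens a gap above $1$. Thus the \emph{strict} inequality in the hypothesis is exactly what purchases the freedom to choose $r>1$.

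The boundary value $p_2=\infty$ (where $p_2'=1$ leaves no admissible $r>1$) I would dispose of by the antisymmetry $\bh(f,g)=-\bh(g,f)$, which follows from the oddness of $\sigma$: it makes the boundedness statements $M^{p_1,q_1}\times M^{p_2,q_2}\to M^{p_0,q_3}$ and $M^{p_2,q_2}\times M^{p_1,q_1}\to M^{p_0,q_3}$ equivalent, and since $\tfrac1{p_1}+\tfrac1{p_2}>0$ forbids $p_1=p_2=\infty$, after swapping the second exponent is finite. The step I expect to be the main obstacle is exactly this index bookkeeping: because the singular factor $V_\psi\sigma$ lies in $L^r$ but \emph{not} in $L^1$, the variable $t_1$ must be integrated first and absorbed into a convolution through Young's inequality, so $\sigma_\bh$ belongs only to spaces with one specific integration order. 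This pins down which of the five time-cases of Theorem~\ref{thm:bilinear} are genuinely available (case~(4), not case~(2) or~(5)), and one must confirm that these forced cases' inequalities really are implied by the two clean hypotheses; the endpoint $p_0=1$, where $r\le p_0$ excludes every $r>1$, is the delicate point.

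Finally, the displayed special case is a direct specialization: taking $p_1=p,\ p_2=p',\ q_1=q,\ q_2=q'$, $p_0=1+\epsilon$, and $q_3=\infty$, one has $\tfrac1p+\tfrac1{p'}=1>\tfrac1{1+\epsilon}$ and $\tfrac1q+\tfrac1{q'}=1=1+\tfrac1{q_3}$, so both hypotheses hold and the first part yields $\|\bh(f,g)\|_{M^{1+\epsilon,\infty}}\le C\,\|f\|_{M^{p,q}}\,\|g\|_{M^{p',q'}}$.
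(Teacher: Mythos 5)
Your proposal follows essentially the same route as the paper: Proposition~\ref{symb-bht} supplies $\sigma_{\bh}\in\mathcal{M}^{(\infty,1,r),(\infty,\infty,1)}$ for every $r>1$, and this is fed into case~(4) on both the time and frequency sides of Theorem~\ref{thm:bilinear} with $\kappa,\rho$ the identity permutations, the strict inequality $\tfrac1{p_1}+\tfrac1{p_2}>\tfrac1{p_0}$ buying exactly the room to take $\tfrac1r=1-\epsilon$. Your frequency-side reduction is the correct one; the paper's final display in fact contains a slip (it writes $+0+0+0$ where $1/s_3=1$ should appear, which would make the stated sufficient condition strictly stronger than the hypothesis), and your version, which recovers precisely $\tfrac1{q_1}+\tfrac1{q_2}\ge1+\tfrac1{q_3}$, is what the argument actually needs.

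Two boundary remarks. Your antisymmetry reduction $\bh(f,g)=-\bh(g,f)$ for $p_2=\infty$ is a genuine small addition: the paper never verifies the standing requirement $\tfrac1{p_2}+\tfrac1{r_2}\ge1$, which fails when $p_2=\infty$, and your swap repairs this cleanly because the hypotheses are symmetric in $(p_1,q_1)$ and $(p_2,q_2)$ while $\tfrac1{p_1}+\tfrac1{p_2}>0$ forces at least one finite exponent. On the other hand, the endpoint $p_0=1$ that you flag but do not resolve is a real gap --- in your writeup and, equally, in the paper's own proof, which asserts that ``all simple inequalities hold'' without checking the side condition $r_2\le p_0$ of case~(4); that condition excludes every $r>1$ when $p_0=1$. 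As you correctly observe, the alternative cases are blocked because $V_\psi\sigma(\zeta,\cdot)\in L^r\setminus L^1$ forces $t_1$ to carry the exponent $1$ and to be integrated (convolved away) before the other time variables; moreover, no choice of the auxiliary parameters $\widetilde p_i$ of Theorem~\ref{thm:main} can help, since in the paper's derivation of case~(4) the nonemptiness of the diagonal strip is exactly the condition $\tfrac1{p_0}\le\tfrac1{r_2}$. So, as written, both your argument and the paper's establish the theorem only for $p_0>1$; this does cover the displayed special case, where $p_0=1+\epsilon$, and your specialization there is correct.
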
 

% In particular, given $\epsilon>0$, and $p\geq 1+\epsilon$, we have
%
%$$ \|H(f, g)\|_{M^{1+\epsilon, 1}}\leq C   \|f\|_{M^{p,1}} \|g\|_{M^{p',1}}.$$
%\begin{theorem}\label{bdbht}
%Let  $1\leq  p_0, p_1, p_2, \leq \infty $ and $r>1$ such that
%$\frac{1}{p_2}+\frac{1}{r}\geq 1$. 
%\begin{enumerate}
%\item Assume that $\frac{1}{p_1}+\frac{1}{p_2}\geq 2 -\frac{1}{r}$, then the bilinear Hilbert transform
%  extends to a bounded bilinear operator from $M^{p_1, 1} \times
%  M^{p_2, 1}$ into $M^{1,1}$. Moreover, there exists a constant $C>0$
%  such that 
%$$ \|H(f, g)\|_{M^{1, 1}}\leq C \|f\|_{M^{p_1,1}} \|g\|_{M^{p_2,1}}.$$ 
%
%\item Assume that  $  \frac{1}{p_1}+\frac{1}{p_2}>2-\frac{1}{r}$. Then the bilinear Hilbert transform
%  extends to a bounded bilinear operator from $M^{p_1, 1} \times
%  M^{p_2, 1}$ into $M^{1,1}$. Moreover, there exists a constant $C>0$
%  such that 
%
%$$ \|H(f, g)\|_{M^{p_0, 1}}\leq C \|f\|_{M^{p_1,1}}
%\|g\|_{M^{p_2,1}}.$$ In particular, if
%$\frac{1}{p_1}+\frac{1}{p_2}=1$, for all $\epsilon>0$ we have
%
%$$ \|H(f, g)\|_{M^{1+\epsilon, 1}}\leq C   \|f\|_{M^{p_1,1}} \|g\|_{M^{p_1',1}}.$$
%\end{enumerate}
%\end{theorem} 

\begin{proof}
Since the symbol $\sigma_\bh$ of $\bh$ satisfies $\sigma_\bh \in \mathcal{M}^{(\infty, 1, r), (\infty,
   \infty, 1)}$, the proof follows from Theorem~\ref{thm:bilinear}. Indeed, on the time side, all simple inequalities hold and we are left to check 
 $$  2+ \frac 1 {p_0}\leq \frac 1 {r_0} + \frac 1{r_1}+\frac 1{r_2} +\frac 1{\max\{p_1, r_0'\}}+\frac 1{p_2},$$
 which is with $\frac 1 r=1-\epsilon$
 $$  2+ \frac 1 {p_0} \leq 0+1+1-\epsilon +\frac 1{\max\{p_1, 1\}}+\frac 1{p_2}.
 $$
 On the frequency side, the conditions
\begin{align*}
  2 &\leq\frac 1{\max\{q_1,s_1'\}}+\frac 1{\max\{q_2,s_2'\}}+ \frac 1 {s_2}+\frac 1 {s_3}, \quad s_3\leq q_2',s_2,\\
 2+\frac 1 {q_3} &\leq \frac 1{\max\{q_1,s_1'\}}+\frac 1{\max\{q_2,s_2'\}} +\frac 1 {s_1}+\frac 1 {s_2}+\frac 1 {s_3},
\end{align*} are clearly satisfied whenever
\[
	 2+\frac 1 {q_3} \leq \frac 1{\max\{q_1,1\}}+\frac 1{\max\{q_2,1\}} +0+0+0.\qedhere
\]
\end{proof}

\begin{remark}\rm
 It was proved in \cite{mlct97, mlct99} that the bilinear Hilbert transform $\bh$ continuously maps $ L^{p_1} \times L^{p_2}$ into $L^{p}$ where $\tfrac{1}{p}=\tfrac{1}{p_1}+\tfrac{1}{p_2}$, $1\leq p_1, p_2 \leq \infty$ and $2/3 < p \leq \infty$.  
Our results give that if  $1<p, q, p_1 < \infty$ then $H$ maps continuously $M^{p_1,q}\times M^{p_1', q'}$ into $M^{p, \infty}$.  

One can use embeddings between modulation spaces and Lebesgue spaces to get some ``mixed'' boundedness results. For example, assume that $q\geq 2$ and  $q'\leq p_1 \leq q$, then it is known that (see \cite[Proposition 1.7]{Toft1})  $$L^{p_1} \subset M^{p_1,q}\quad {\textrm and}\quad M^{p_{1}', q'}\subset L^{p_{1}'}.$$  Consequently, it follows from Theorem~\ref{bdbht} that $\bh$ continuously maps $L^{p_1}\times M^{p'_{1}, q'} $ into $M^{p, \infty}\supset L^p$.  
%
%The underlying point we would like to make is that our methods and the results that we derived are completely different from classical tools used to investigate the bilinear Hilbert transform and other multilinear pseudodifferential operators. As such a direct comparison of our results to those obtained by these classical tools is not really appropriate. 
%
%\color{red} Please reword this.\color{black}
%
\end{remark}

\subsection{The trilinear Hilbert transform}\label{sec5}

In this final section we consider  the trilinear Hilbert transform $\th$ given formally by 
\begin{equation*}%\label{tri-hilbert}
\th(f, g, h)(x)=\lim_{\epsilon\to 0}\int_{|t|>\epsilon}f(x-t)g(x+t)h(x+2t)\frac{dt}{t}.
\end{equation*}

The trilinear Hilbert transform can be written as a trilinear pseudodifferential operator, or more specifically as a trilinear Fourier multiplier given by  $$\th(f, g, h)(x)=\iiint_{\Rr\times \Rr \times \Rr}\sigma_{\th }(x, \xi_1, \xi_2, \xi_3)\hat{f}(\xi_1)\hat{g}(\xi_2)\hat{h}(\xi_3)e^{2\pi i x(\xi_1+\xi_2+\xi_3)}d\xi_1d\xi_2d\xi_3$$ where $$\sigma_{\th}(x, \xi_1, \xi_2, \xi_3)= \sigma(\xi_1-\xi_2-2\xi_3)=\pi i \text{sign}(\xi_1-\xi_2-2\xi_3).$$

Recall from Section~\ref{sub4.2} that  $\psi \in \S(\Rr)$ is chosen such that $\psi(x)=\psi_1(x)-\psi_1(-x)$ with $\psi_1 \in \S(\Rr)$, $0\leq \psi_1(x)\leq 1$ for all $x\in \Rr$.
Next we define  $\Psi(x, \xi_1, \xi_2, \xi_3)=\psi(x)\psi(\xi_2)\psi(\xi_3)\psi(\xi_1-\xi_2-2\xi_3). $ 
We can now compute the symbol window Fourier transform
 $\mathcal{V}_{\Psi}\sigma_{\th}$ of   $\sigma_{\th}$ with respect to $\Psi$ and obtain 
$$
\mathcal{V}_{\Psi}\sigma_{\th}(x, \bm t, \bm \xi, \nu)=V_{\psi}1(x, \nu) V_{\psi}1(\xi_2, -t_1-t_2)  V_{\psi}1(\xi_2, -2t_1 - t_3) V_{\psi}\sigma(\xi_1-\xi_2-2\xi_3, -t_1)|.$$ Observe that $|V_g1(x,\eta)|=|\hat{g}(\eta)|$. Hence,
$$|\mathcal{V}_{\Psi}\sigma_{\th}(x, \bm t, \bm \xi, \nu)|=|\hat{\psi}( \nu)| |\hat{\psi}(-t_1-t_2)| |\hat{\psi}( -2t_1 - t_3)| | V_{\psi}\sigma(\xi_1-\xi_2-2\xi_3, -t_1)|.$$ But by the choice of $\psi$ we see that $\hat{\psi}(-\eta)=-\hat{\psi}(\eta)$. 

\begin{proposition}\label{symb-tht}
For $r>1$, we have 
$\sigma_{\th} \in \mathcal{M}^{(\infty, 1, r, r), (\infty,  \infty, \infty,  1)}$. In particular, this  conclusion  holds when $r=1+\epsilon$ for all $\epsilon>0.$
 \end{proposition}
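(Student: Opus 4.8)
The plan is to start from the explicit expression for $|\mathcal V_\Psi \sigma_\th|$ recorded just above the statement and to feed the uniform $L^r$-bound for $V_\psi\sigma$ obtained in the proof of Proposition~\ref{symb-bht} into the mixed-norm integration prescribed by $\mathcal{M}^{(\infty,1,r,r),(\infty,\infty,\infty,1)}$. First I would exploit the structure of
$$|\mathcal V_\Psi\sigma_\th(x,\bm t,\bm\xi,\nu)|=|\hat\psi(\nu)|\,|\hat\psi(-t_1-t_2)|\,|\hat\psi(-2t_1-t_3)|\,|V_\psi\sigma(\xi_1-\xi_2-2\xi_3,-t_1)|.$$
This quantity is independent of $x$, so the innermost $\sup_x$ (exponent $p_0=\infty$) acts trivially; the factor $|\hat\psi(\nu)|$ depends on none of the time variables nor on $\bm\xi$, so it pulls out and, under the final integration in $\nu$ (exponent $q_4=1$), contributes $\|\hat\psi\|_{L^1}$; and the $\bm\xi$-dependence enters only through $u:=\xi_1-\xi_2-2\xi_3$, so the three suprema in $\xi_1,\xi_2,\xi_3$ (exponents $q_1=q_2=q_3=\infty$) collapse to a single $\sup_{u\in\Rr}$.

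After these reductions, and using that $p_2=p_3=r$ makes the iterated $L^r_{t_2}L^r_{t_3}$ norm equal to the plain $L^r$ norm on $\Rr^2_{t_2,t_3}$, the claim reduces to the finiteness of
$$\sup_{u\in\Rr}\Big\|\textstyle\int_\Rr |\hat\psi(-t_1-t_2)|\,|\hat\psi(-2t_1-t_3)|\,|V_\psi\sigma(u,-t_1)|\,dt_1\Big\|_{L^r(dt_2\,dt_3)}.$$
Substituting $s=-t_1$ and writing $h=|\hat\psi|$, $g_u(s)=|V_\psi\sigma(u,s)|$, the inner integral becomes $I_u(t_2,t_3)=\int g_u(s)\,h(s-t_2)\,h(2s-t_3)\,ds$. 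The crux is that the only control available on $g_u$ is the uniform bound $\sup_u\|g_u\|_{L^r}<\infty$ for $r>1$ extracted in the proof of Proposition~\ref{symb-bht}; since $V_\psi\sigma(u,\cdot)\notin L^1$, the factor $g_u$ must be measured in $L^r$ and never in $L^1$.

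I would therefore integrate in $t_3$ first by viewing $I_u(t_2,\cdot)$ as a convolution: after the change $w=2s$ one gets $I_u(t_2,\cdot)=\tfrac12\big(\widetilde G^{t_2}_u * h\big)$ with $\widetilde G^{t_2}_u(w)=g_u(w/2)\,h(w/2-t_2)$, and Young's inequality in the form $\|f*h\|_{L^r}\le\|f\|_{L^r}\|h\|_{L^1}$ keeps $g_u$ inside an $L^r$ norm, yielding
$$\|I_u(t_2,\cdot)\|_{L^r(dt_3)}\le C\,\|h\|_{L^1}\Big(\textstyle\int g_u(s)^r\,h(s-t_2)^r\,ds\Big)^{1/r}.$$
Raising to the $r$-th power, integrating in $t_2$, and using the translation invariance $\int h(s-t_2)^r\,dt_2=\|h\|_{L^r}^r$ then gives $\|I_u\|_{L^r(dt_2\,dt_3)}\le C\,\|h\|_{L^1}\,\|h\|_{L^r}\,\|g_u\|_{L^r}$.

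Taking $\sup_u$ and invoking $\sup_u\|g_u\|_{L^r}=\sup_\xi\|V_\psi\sigma(\xi,\cdot)\|_{L^r}<\infty$ from Proposition~\ref{symb-bht} finishes the estimate, the total bound being a constant multiple of $\|\hat\psi\|_{L^1}\,\|\hat\psi\|_{L^r}\,\sup_\xi\|V_\psi\sigma(\xi,\cdot)\|_{L^r}$, which is finite for every $r>1$; specializing to $r=1+\epsilon$ yields the last assertion. I expect the main obstacle to be precisely the bookkeeping of the integration order: one must commit $g_u$ to an $L^r$ norm by choosing which of $t_2,t_3$ to convolve out first and in which Young-exponent configuration, since the tempting shortcut of pulling the $t_1$-integral out by Minkowski's integral inequality would force an $L^1$-norm of $V_\psi\sigma(u,\cdot)$ and fail.
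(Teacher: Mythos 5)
Your argument is correct, and it shares the paper's skeleton: the same window $\Psi$, the same factorization $|\mathcal V_\Psi\sigma_{\th}(x,\bm t,\bm\xi,\nu)|=|\hat\psi(\nu)|\,|\hat\psi(-t_1-t_2)|\,|\hat\psi(-2t_1-t_3)|\,|V_\psi\sigma(\xi_1-\xi_2-2\xi_3,-t_1)|$, the same collapse of the $x$-supremum, the $\nu$-integral, and the three $\xi$-suprema to a single $\sup_u$, and the same final appeal to $\sup_\xi\|V_\psi\sigma(\xi,\cdot)\|_{L^r}<\infty$ from the proof of Proposition~\ref{symb-bht}. Where you diverge is in the Young-exponent bookkeeping on the time side, and there your version is actually the more faithful one: the paper's displayed computation carries the exponent $r$ through all three time integrals, convolving $r$-th powers in $L^1$, so as printed it estimates the $\mathcal M^{(\infty,r,r,r),(\infty,\infty,\infty,1)}$ norm rather than the $(\infty,1,r,r)$ norm of the statement (an exponent slip; the class with $L^1$ in $t_1$ is the one Theorem~\ref{boundeness-tht} invokes). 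You instead keep the $t_1$-integration at exponent $1$, as the definition of the mixed norm requires, then convolve out $t_3$ via $\|f\ast h\|_{L^r}\le\|f\|_{L^r}\|h\|_{L^1}$ and finish the $t_2$-integral by translation invariance, which correctly commits $V_\psi\sigma(u,\cdot)$ to an $L^r$ norm throughout; your closing observation that a Minkowski shortcut would demand the unavailable $L^1$ bound on $V_\psi\sigma(u,\cdot)$ is exactly the right diagnosis of why the order matters. Two cosmetic points only: your final constant should be $\|\hat\psi\|_{L^1}^2\,\|\hat\psi\|_{L^r}\,\sup_\xi\|V_\psi\sigma(\xi,\cdot)\|_{L^r}$ (one factor $\|\hat\psi\|_{L^1}$ from the $\nu$-integral and one from the Young step), and it is the evenness of $|\hat\psi|$, coming from $\hat\psi(-\eta)=-\hat\psi(\eta)$, that lets you read the $t_3$-integral as a genuine convolution --- both harmless.
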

 
\begin{proof}
Let $r>1$. We proceed as in the proof of Proposition~\ref{symb-bht}, and  integrate 
$$\mathcal V_{\Psi}\sigma_{\th }(x, \bm t, \bm \xi, \nu)$$ in the following order: 
\begin{align*}
\begin{array}{llll}
 x \to  r_0=\infty,
&t_1 \to   r_1=1,
&t_2 \to   r_2=r>1,
&t_3 \to   r_3=r>1,\\
 \xi_1 \to  s_1= \infty,
&\xi_2 \to   s_2=\infty,
&\xi_3 \to   s_3=\infty,
&\nu \to   s_0=1. 
\end{array}
\end{align*}
In particular, we  estimate 
\begin{align*}
\|\sigma_{\th}\|_{\mathcal{M}^{(\infty, 1, r, r), (\infty, \infty, \infty, 1)}} &=\int_{\RR}d\nu \sup_{\xi_1, \xi_2, \xi_3} \bigg(\int_{\RR}dt_{3}\int_{\RR}dt_2\int_{\RR}dt_1 \sup_{x}|\mathcal{V}_{\Psi}\sigma_{H}(x, \bm t, \bm \xi, \nu)|^r\bigg)^{1/r}\\
& = \int_{\RR}d\nu \sup_{\xi_1, \xi_2, \xi_3} \bigg(\int_{\RR}dt_{3}\int_{\RR}dt_2\int_{\RR}dt_1 \sup_{x} |\hat{\psi}( \nu)|^r |\hat{\psi}(-t_1-t_2)|^r \\
& \qquad |\hat{\psi}( -2t_1 - t_3)|^r | V_{\psi}\sigma(\xi_1-\xi_2-2\xi_3, -t_1)|^r\bigg)^{1/r}\\
&= \|\hat{\psi}\|_1 \sup_{\xi_1, \xi_2, \xi_3} \bigg(\int_{\RR}dt_{3}\int_{\RR}dt_2\int_{\RR}dt_1  |\hat{\psi}(-t_1-t_2)|^r  |\hat{\psi}( -2t_1 - t_3)|^r \\
& \qquad | V_{\psi}\sigma(\xi_1-\xi_2-2\xi_3, -t_1)|^r\bigg)^{1/r}\\
&= \|\hat{\psi}\|_1 \sup_{\xi_1, \xi_2, \xi_3} \bigg(\int_{\RR}dt_{3}\int_{\RR}dt_2\int_{\RR}dt_1  |\hat{\psi}(t_2+t_1)|^r  |\hat{\psi}(t_3 + 2t_1)|^r \\
& \qquad | V_{\psi}\sigma(\xi_1-\xi_2-2\xi_3, -t_1)|^r\bigg)^{1/r}\\
&= \|\hat{\psi}\|_1 \sup_{\xi_1, \xi_2, \xi_3} \bigg(\int_{\RR}dt_{3}\int_{\RR}dt_2\int_{\RR}dt_1  |\hat{\psi}(t_2-t_1)|^r  |\hat{\psi}(2(t_3 - t_1))|^r \\
& \qquad | V_{\psi}\sigma(\xi_1-\xi_2-2\xi_3, t_1)|^r\bigg)^{1/r}\\
&= \|\hat{\psi}\|_1 \sup_{\xi_1, \xi_2, \xi_3} \bigg(\int_{\RR}dt_{3}\int_{\RR}dt_2\int_{\RR}dt_1  |\hat{\psi}(t_1)|^r  |\hat{\psi}(2(t_2 -t_3 - t_1))|^r \\
& \qquad | V_{\psi}\sigma(\xi_1-\xi_2-2\xi_3, t_2-t_1)|^r\bigg)^{1/r}\\
&= \|\hat{\psi}\|_1 \sup_{\xi_1, \xi_2, \xi_3} \bigg(\int_{\RR}dt_{3}\int_{\RR}dt_2 |\hat{\psi}|^r \ast ( |T_{t_3}\hat{\psi}_{2}|^r  |\widetilde{V_{\psi}\sigma}(\xi_1-\xi_2-2\xi_3, \cdot)|^r)(t_2)\bigg)^{1/r}\\
\end{align*}
where $\hat{\psi}_{2}(\xi)=\hat{\psi}(2\xi)$, and $\widetilde{V_{\psi}\sigma}(\xi_1-\xi_2-2\xi_3, \eta)=V_{\psi}\sigma(\xi_1-\xi_2-2\xi_3, -\eta).$
Consequently,

\begin{align*}
\|\sigma_{\th}\|_{\mathcal{M}^{(\infty, 1, r, r), (\infty, \infty, \infty, 1)}} & \leq  \|\hat{\psi}\|_1  \|\hat{\psi}\|_r \sup_{\xi_1, \xi_2, \xi_3} \bigg(\int_{\RR}dt_{3}\int_{\RR}dt_2   |\hat{\psi}_{2}(t_2-t_3)|^r  |\widetilde{V_{\psi}\sigma}(\xi_1-\xi_2-2\xi_3, t_2)|^r)\bigg)^{1/r}\\
&= \|\hat{\psi}\|_1  \|\hat{\psi}\|_r \sup_{\xi_1, \xi_2, \xi_3} \bigg(\int_{\RR}dt_{3}   |\hat{\psi}_{2}|^r \ast  |\widetilde{V_{\psi}\sigma}(\xi_1-\xi_2-2\xi_3, \cdot)|^r (t_3)\bigg)^{1/r}\\
& \leq  \|\hat{\psi}\|_1  \|\hat{\psi}\|_r \|\hat{\psi}_2\|_{r} \sup_{\xi_1, \xi_2, \xi_3} \bigg(\int_{\RR}dt_{3}  |\widetilde{V_{\psi}\sigma}(\xi_1-\xi_2-2\xi_3, t_3)|^r \bigg)^{1/r}\\
& = \|\hat{\psi}\|_1  \|\hat{\psi}\|_r \|\hat{\psi}_2\|_{r} \sup_{\xi_1, \xi_2, \xi_3} \bigg(\int_{\RR}dt_{3}  |V_{\psi}\sigma(\xi_1-\xi_2-2\xi_3, t_3)|^r \bigg)^{1/r}.
\end{align*}
The proof is complete by observing that  the proof of Proposition~\ref{symb-bht} implies that 
\[\sup_{\xi_1, \xi_2, \xi_3} \bigg(\int_{\RR}dt_{3}  |V_{\psi}\sigma(\xi_1-\xi_2-2\xi_3, t_3)|^r \bigg)^{1/r} < \infty.\qedhere \]%
%
%When we take the integral on $t_1$, we apply H\"older's inequality to the product of $V_{\psi}1(\xi_2, 2t_1 + t_3) V_{\psi}1(\xi_2, t_1+t_2) $ and $V_{\psi}\sigma(\xi_1-\xi_2-2\xi_3, -t_1).$ So we'll get the $L^r$ norm on the first and $L^{r'}$ norm on the second for $r>1$. The part involving the $L^{r'}$ norm will be dealt with as in the BHT.
%
%for the first part we will have (after raising to the $r$) and integrating in $t_2$
%\begin{align*}
%\int \int|V_{\psi}1(\xi_2, 2t_1 + t_3)|^r |V_{\psi}1(\xi_2, t_1+t_2)|^r dt_1 dt_2& =\iint |V_{\psi}1(\xi_2, 2(t_1 -t_2+ t_3/2))|^r |V_{\psi}1(\xi_2, t_1)|^r dt_1 dt_2\\
%&= \int|V_{\psi}1(\xi_2, 2(\cdot))|^r\ast |V_{\psi}1(\xi_2, \cdot)|^r(t_2-t_3/2)dt_2\\
%&= \int|V_{\psi}1(\xi_2, 2(\cdot))|^r\ast |V_{\psi}1(\xi_2, \cdot)|^r(t_2)dt_2\\
%&\leq \int |V_{\psi}1(\xi_2, 2(t_2))|^rdt_2 \int |V_{\psi}1(\xi_2, \cdot)|^r(t_2)dt_2
%\end{align*}
%and this implies that we loose $t_3$ so we need to use $r_3=\infty$. The rest is straightforward. 
%
%$$\sigma_H\in \mathcal{M}^{(\infty, 1, r, \infty), (\infty, \infty, \infty, 1)}$$ for any $r>1$. So we can obtain a few results.....
%
\end{proof}
Using this result and Theorem~\ref{thm:main} for $m=3$ we can give the following initial result on the boundedness of $\th$ on product of modulation spaces.

%But we can apply Minkwoski's inequality and get $t_1=1$, $t_2=t_3=r>1$ leading to 
%$$\sigma_H\in \mathcal{M}^{(\infty, 1, r, r), (\infty, \infty, \infty, 1)}$$ for any $r>1$.  It seems that for $r=2$ we can map $L^2\times L^2 \times L^2$ into $M{p_0, q_3}$ where $p_0\geq 2$ and $q_3\geq 1$.... need to double check.

%\begin{theorem}\label{boundeness-tht}
%Fix $r>1$, and let $1\leq p_k \leq \infty$, $k=0, 1, 2, 3$ be such that  $r\leq p_0$, and $p_2, p_3 \leq r'$. Assume that $$ \tfrac{1}{p_1}+\tfrac{1}{p_2}\leq 1\quad 2-\tfrac{2}{r}+\tfrac{1}{p_0}\leq  \tfrac{1}{p_1}+\tfrac{1}{p_2}+\tfrac{1}{p_3}\leq 2 - \tfrac{1}{r}.$$ For $1\leq q_1, q_2 \leq \infty$ with $\tfrac{1}{q_1}+\tfrac{1}{q_2}\geq 1$, we have that the trilinear Hilbert transform  is bounded from $M^{p_1,q_1}\times M^{p_2, q_2}\times M^{\infty, 1}$ into $M^{p_0, \infty}$ and we have the following estimate:
%$$\|H(f, g, h)\|_{M^{p_0, \infty}}\leq C \|f\|_{M^{p_1, q_1}}  \|g\|_{M^{p_2, q_2}}  \|h\|_{M^{\infty, 1}}$$ for all $f, g, h \in S(\RR)$. In particular, $H$ maps  
%
%$$L^2\times L^2 \times M^{\infty,1}\longrightarrow M^{p,\infty}$$ for each $p>1$. 
%
%\end{theorem}
%

\begin{theorem}\label{boundeness-tht}
For $p,p_0,p_1\in (1,\infty)$ and $1\leq q \le \infty$,  the trilinear Hilbert transform $\th$   is bounded from $M^{p_1,1}\times M^{p, q}\times M^{p', q'}$ into $M^{p_0, \infty}$ and we have the following estimate:
$$\|\th(f, g, h)\|_{M^{p_0, \infty}}\leq C \|f\|_{M^{p_1,1}}  \|g\|_{M^{p, q}}  \|h\|_{M^{p', q'}}$$ for all $f, g, h \in S(\RR),$ where the constant $C>0$ is independent of $f, g, h$. 

%Clearly, the strongest results we obtain by choosing $p_0$ close to 1, and $p_1$ close to $\infty$.

%As special case, $H$ maps  boundedly
%$$M^{R,1}\times L^2 \times L^2\longrightarrow M^{1+\epsilon,\infty}$$
%for every $R<\infty$ and $\epsilon>0$. 
\end{theorem}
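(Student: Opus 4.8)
The plan is to deduce the statement from Theorem~\ref{thm:main} in the case $m=3$, exactly as Theorem~\ref{bdbht} was derived from Theorem~\ref{thm:bilinear}. By Proposition~\ref{symb-tht} we have $\sigma_{\th}\in\mathcal M^{(\infty,1,r,r),(\infty,\infty,\infty,1)}$ for every $r>1$, so I would fix $r=1+\epsilon$ (with $\epsilon$ chosen small at the end), take $\kappa$ and $\rho$ to be the identity permutations (so $z=0$ and $w=m+1=4$) and all weights equal to $1$, and match the parameters of Theorem~\ref{thm:main} to $r_0=\infty$, $r_1=1$, $r_2=r_3=r$, $s_1=s_2=s_3=\infty$, $s_{m+1}=s_4=1$, together with the source/target data $q_1=1$, $q_2=q$, $q_3=q'$, $q_{m+1}=\infty$ and the $p_k$ prescribed by the spaces $M^{p_1,1}$, $M^{p,q}=M^{p_2,q_2}$, $M^{p',q'}=M^{p_3,q_3}$ and $M^{p_0,\infty}$. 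With this identification the asserted bound follows once I exhibit auxiliary exponents $\widetilde p_1,\widetilde p_2,\widetilde p_3$ with $p_k\le\widetilde p_k\le r_k'$ and $\widetilde q_1,\widetilde q_2,\widetilde q_3$ with $\widetilde q_k\ge\max\{q_k,s_k'\}$ satisfying conditions (1)--(4) of Theorem~\ref{thm:main}.

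The frequency side is immediate. Since $s_1=s_2=s_3=\infty$ we have $s_k'=1$, so I may take $\widetilde q_k=q_k$. Writing $Q_k=1/q_k$ and $S_k=1/s_k$, conditions (3) and (4) reduce, using $S_4=1$, $S_1=S_2=S_3=0$, $Q_4=0$ and (crucially) $Q_1=1$, to the inequalities $1+Q_1+Q_2+Q_3\ge3$, $1+Q_2+Q_3\ge2$ and $1+Q_3\ge1$, all of which hold because $Q_2+Q_3=1/q+1/q'=1$ for every $q\in[1,\infty]$. Thus no constraint on $q$ arises, and it is precisely the requirement $Q_1=1$, i.e. $q_1=1$, that forces the first factor to lie in $M^{p_1,1}$, matching the hypothesis.

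The time side is the heart of the matter. Abbreviating $P_k=1/p_k$ and $\delta=1-1/r$, and noting $P_3=1/p'=1-P_2$, conditions (1)--(2) become the linear feasibility problem of finding $\widetilde P_1\in[0,P_1]$, $\widetilde P_2\in[\delta,P_2]$, $\widetilde P_3\in[\delta,1-P_2]$ with $\widetilde P_1+\widetilde P_2\le1$, $\widetilde P_1+\widetilde P_2+\widetilde P_3\le1+\delta$, and the gap-filling requirement $\widetilde P_1+\widetilde P_2+\widetilde P_3\ge2\delta+P_0$. I would show that the maximal admissible value of $\widetilde P_1+\widetilde P_2+\widetilde P_3$ equals $1+\delta$: treating the cases $P_1+P_2\ge1$ and $P_1+P_2<1$ separately, one checks that the ceiling $1+\delta$ is attained inside the box while respecting $\widetilde P_1+\widetilde P_2\le1$. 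Since $p_0>1$ gives $P_0<1$, the inequality $1+\delta\ge2\delta+P_0$ holds as soon as $\delta\le1-P_0$, while non-emptiness of the box and attainability of the ceiling hold as soon as $\delta\le\min\{P_1,P_2,1-P_2\}$. All these thresholds are strictly positive because $p_0,p,p_1\in(1,\infty)$, so choosing $\epsilon$ (hence $\delta=\epsilon/(1+\epsilon)$) small enough, depending only on $p_0,p,p_1$, makes conditions (1)--(4) simultaneously satisfiable; Theorem~\ref{thm:main} then yields the estimate with a constant $C$ absorbing $\|\sigma_{\th}\|_{\mathcal M^{(\infty,1,r,r),(\infty,\infty,\infty,1)}}$ for the selected $r$.

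The main obstacle is exactly this time-side feasibility, and specifically the failure of the naive choice $\widetilde p_k=p_k$: with $\widetilde P_3=1-P_2$ condition (1) for $k=2$ would demand $P_1\le\delta$, false for generic $p_1$. Hence the auxiliary-parameter mechanism of Example~\ref{example:auxillary} is indispensable — one must enlarge $\widetilde p_3$ (push $\widetilde P_3$ down toward $\delta$) to free up room in condition (1) while still meeting condition (2), and it is the third factor, absorbing decay through $\widetilde P_3$, that lets the trilinear estimate close. The only genuine quantitative input is that $r$ may be taken arbitrarily close to $1$, which is exactly what Proposition~\ref{symb-tht} supplies.
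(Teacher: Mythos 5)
Your proposal is correct and follows essentially the same route as the paper: both deduce the result from Theorem~\ref{thm:main} with $m=3$, identity permutations, the symbol class $\mathcal{M}^{(\infty,1,r,r),(\infty,\infty,\infty,1)}$ from Proposition~\ref{symb-tht}, a frequency side that forces $q_1=1$ and $q_{m+1}=\infty$ via $1/q+1/q'=1$, and a time-side feasibility argument for auxiliary exponents $\widetilde p_k$. The only (cosmetic) differences are bookkeeping: the paper fixes the explicit value $r=\min\{p_0,p_1',p,p'\}$ and the choice $\widetilde p_2=p$, $\widetilde p_3=p'$, then enlarges $\widetilde p_1$ to at least $r'$, whereas you take $r=1+\epsilon$ small and close the gap by pushing $\widetilde P_3$ down to $\delta$ --- both are valid solutions of the same feasibility problem.
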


\begin{remark}
Before proving this result we point out that the strongest results are  obtained by choosing $p_0$ as close to $1$ as possible and $p_1$ as close to $\infty$ as possible.

As special case, we see that $\th$  boundedly maps
$$M^{r,1}\times L^2 \times L^2\longrightarrow M^{1+\epsilon,\infty}$$
for every $r<\infty$ and $\epsilon>0$. 
\end{remark}

\begin{proof}We set $r=\min\{p_0,p_1',p,p'\}>1$.  The symbol of $\th$  is in the symbol modulation space with decay parameters $r_0=\infty, r_1=1, r_2=r_3=r>1$ as used in  Theorem~\ref{thm:main}. Note that here,  $\kappa$ is the identity permutation, so  $z=0$. The  boundedness conditions in  Theorem~\ref{thm:main} now read
\begin{itemize}
 \item[$k=0$\,:] \quad $\displaystyle 0+ \frac 1 {\widetilde p_1} \leq 1$;
  \item[$k=1$\,:] \quad $\displaystyle 0+ \frac 1 {\widetilde p_2} + \frac 1 {\widetilde p_1} +1 \leq 2$;
    \item[$k=2$\,:] \quad $\displaystyle 0+ \frac 1 {\widetilde p_3} + \frac 1 {\widetilde p_1} +1+\frac 1 {\widetilde p_2} +\frac 1 r  \leq 3$;
     \item[$k=3$\,:] \quad $\displaystyle 0 + \frac 1 {\widetilde p_1} +1+\frac 1 {\widetilde p_2} +\frac 1 r +\frac 1 {\widetilde p_3} +\frac 1 r  \geq 3+\frac 1 {p_0}$;
\end{itemize}
where
\begin{align}\label{eqn:threeSimple}
 p_1\leq \widetilde p_1\leq r_1'=\infty, \quad p_2\leq \widetilde p_2\leq r', \quad p_3 \leq \widetilde p_3\leq r'.
\end{align}
 The four conditions above reduce to
\begin{itemize}
% \item[$k=0$\,:] \quad $\displaystyle 0+ \frac 1 {\widetilde p_1} \leq 1$;
  \item[$k=1$\,:] \quad $\displaystyle  \frac 1 {\widetilde p_1} + \frac 1 {\widetilde p_2} \leq 1$;
    \item[$k=2$\,:] \quad $\frac 1 {\widetilde p_1} +\frac 1 {\widetilde p_2} + \frac 1 {\widetilde p_3}   \leq 2-\frac 1 {r} $;
     \item[$k=3$\,:] \quad $\frac 1 {\widetilde p_1} +\frac 1 {\widetilde p_2} +\frac 1 {\widetilde p_3}  \geq 2-\frac 2 r +\frac 1 {p_0}$;
\end{itemize}
For simplicity, we now set $p_2=\widetilde p_2=p_3'=\widetilde p_3'\in [r,r']$ and obtain
\begin{itemize}
% \item[$k=0$\,:] \quad $\displaystyle 0+ \frac 1 {\widetilde p_1} \leq 1$;
  \item[$k=1$\,:] \quad $\displaystyle  \frac 1 {\widetilde p_1}  \leq  \frac 1 {\widetilde p_3}$;
    \item[$k=2$\,:] \quad $\frac 1 {\widetilde p_1}   \leq 1-\frac 1 {r} $;
     \item[$k=3$\,:] \quad $\frac 1 {\widetilde p_1}  \geq 1-\frac 2 r +\frac 1 {p_0}$;
\end{itemize}
that is
\begin{itemize}
% \item[$k=0$\,:] \quad $\displaystyle 0+ \frac 1 {\widetilde p_1} \leq 1$;
  \item[$k=1$\,:] \quad $\displaystyle  {\widetilde p_1}  \geq  {\widetilde p_3}$;
    \item[$k=2$\,:] \quad $ {\widetilde p_1}   \geq r' $;
     \item[$k=3$\,:] \quad $\frac 1 {\widetilde p_1}  \geq 1-\frac 2 r +\frac 1 {p_0}$;
\end{itemize}

Note that the condition for $k=1$ follows from the $k=2$ condition since $p_3'\leq r'$.

For the existence of $\widetilde p_1 \geq p_1$, satisfying the $k=2$ and $k=3$ conditions, we require 
$
2-\frac 1 {r}\geq 2-\frac 2 r +\frac 1 {p_0}
$,
which is $r\leq p_0$, a condition that is met. Some $\widetilde p_1\geq p_1$ will satisfy all conditions if $\frac 1 { p_1}  \geq 1-\frac 2 r +\frac 1 {p_0}$. Indeed,
$$
1-\frac 2 r +\frac 1 {p_0}= 1-\frac 1 r + \frac 1 {p_0} - \frac 1 r\leq  1-\frac 1 r \leq \frac 1 {p_1}.
$$

We now consider the conditions of Theorem~\ref{thm:main} on the frequency side. We choose $\rho$ to be the identity permutation on $\{1,2,3,4\}$, $s_1=s_2=s_3=\infty, s_4=1$.
%, q_3=1, q_4=\infty$.
We now have to consider existence of $\widetilde q_1\geq s_1'=1$, $\widetilde q_2\geq s_2'=1$, and $\widetilde q_3\geq s_3'=1$ with
\begin{itemize}
 \item[$k=1$\,:] \quad $\displaystyle \frac 1 {\widetilde q_1} +\frac 1 {\widetilde q_2} + \frac 1 {\widetilde q_3}\geq 2$;
  \item[$k=2$\,:] \quad $\displaystyle \frac 1 {\widetilde q_2} +   \frac 1 {\widetilde q_3} \geq 1$;
    \item[$k=3$\,:] \quad $\displaystyle   \frac 1 {\widetilde q_3} \geq 0 $ ;
     \item[$k=4$\,:] \quad $\displaystyle \frac 1 {\widetilde q_1} +\frac 1 {\widetilde q_2} +    \frac 1 {\widetilde q_3}\geq 2+\frac 1 {q_4}$.
\end{itemize}
These conditions reduce to
\begin{align*}
  \frac 1 {\widetilde q_2} +   \frac 1 {\widetilde q_3} \geq 1, \quad \frac 1 {\widetilde q_1} +\frac 1 {\widetilde q_2} +    \frac 1 {\widetilde q_3}\geq 2+\frac 1 {q_4}.
\end{align*}
To assume optimally large   $q_1,q_2,q_3$,  we choose $\tilde{q}_2=q_2=q$, $q_3'=\tilde{q}_3'=q'$ and $\frac 1 { q_1} = 1+\frac 1 {q_4}$, the latter only being satisfied if $q_1=1$ and $q_4=\infty$.
%
%\color{blue}
%With our observations on the time side, we summarize that with $r=1+\epsilon>1$ and any $u$, a given operator with symbol in  $\mathcal M^{(\infty,r,r);(\infty,\infty,\infty)}$ on $M^{r,1}\times M^{r',u} \times M^{r'/2,u'}$ into $M^{1,\infty}$.
%
%Clearly, $\mathcal M^{(\infty,r,r);(\infty,\infty,\infty)}=\mathcal M^{(\infty,r,r);(\infty,\infty,\infty),(3,2,1)}$, a fact which allows us to interchange the role of $q_1$ and of $q_3$. That is, interchanging $u$ and $u'$, we obtain also boundedness from $M^{r,u}\times M^{r',u'} \times M^{r'/2,1}$ into $M^{1,\infty}$ for $r>1$ and  $u\geq 1$.
%\color{black}
\end{proof}

In \cite[Theorem 13]{mtt} it is proved that the trilinear Hilbert transform is bounded from $L^p \times L^q \times \mathcal{A}$ into $L^r$ whenever $1<p, q\leq \infty$, $2/3< r < \infty$ and $\tfrac{1}{p}+\tfrac{1}{q}=\tfrac{1}{r},$ where $\mathcal{A}$ is the Fourier algebra. In particular, for $p=q=2$, then $r=1$ and the operator maps boundedly $L^2  \times L^2 \times \mathcal{A}$ into $L^1$. 

From \cite[Proposition 1.7]{Toft1} we know that when $p\in (1,2)$ and $p<q'<p'$, then $\mathcal{F}L^{q'} \subset M^{p',q'}$. We can then conclude that $\th$ continuously maps $M^{p_1,1}\times M^{p,q}\times \mathcal{F}L^{q'}$ into $M^{p_0,\infty}$. 

%On the other hand, Theorem~\ref{boundeness-tht} gives the boundedness of the trilinear Hilbert transform from $M^{2,1}\times L^2 \times L^2$ into $M^{2, \infty}$. {\bf We need to compare $\mathcal{A}$ to $M^{2,1}$ }

\section*{Acknowledgment}
K.~A.~Okoudjou  was partially supported by a RASA from the Graduate School of
UMCP,  the Alexander von Humboldt foundation, and  by a grant from the Simons Foundation ($\# 319197$ to Kasso Okoudjou). G.~E.~Pfander appreciates the hospitality of the mathematics departments at MIT and at the TU Munich. This project originated during a sabbatical at MIT and was completed during a visit of TU Munich as John von Neumann Visiting Professor.   G.~E.~Pfander also appreciates funding from the German Science Foundation (DFG) within the project Sampling of Operators.

\vspace{2cm}

\end{document}